\documentclass[11pt,a4paper,openany,oneside]{article}
\usepackage[a4paper,left=2.5cm,right=2.5cm, top=3cm, bottom=3cm]{geometry}
\usepackage{times}
\usepackage{authblk}
\usepackage[utf8]{inputenc}
\usepackage[english]{babel}
\usepackage{mathrsfs}
\usepackage{stmaryrd}
\usepackage{amssymb,amsmath,amsthm}
\usepackage{hyperref}
\usepackage{graphicx}
\usepackage{color}
\usepackage{enumerate}
\usepackage{multirow}
\usepackage{mathtools}
\usepackage{scalerel}
\usepackage{booktabs}
\usepackage{float}
\usepackage{amsfonts}
\usepackage{cases}
\usepackage{caption}

\usepackage{xcolor,calc}

\theoremstyle{plain}
\newtheorem*{thmA}{Theorem A} 
\newtheorem*{thmB}{Obstruction B}
\newtheorem{thm}{Theorem}[section]
\newtheorem{cor}[thm]{Corollary}
\newtheorem{lem}[thm]{Lemma}

\newtheorem{prop}[thm]{Proposition}

\theoremstyle{definition}

\theoremstyle{remark}
\newtheorem{remark}{Remark}

\definecolor{myred}{rgb}{0.84,0.07,0.14}

\newcommand{\R}{\mathbb R}
\newcommand{\N}{\mathbb N}

\newcommand{\les}{\lesssim}

\newcommand{\dB}{{\partial B}}

\def\e{{\rm e}}

\newcommand{\cN}{{\mathcal N}}

\newcommand{\cP}{{\mathcal P}}

\newcommand{\bR}{{\overline{R}}}
\newcommand{\bu}{{\overline{u}}}
 \def\dd{\, {\rm d}}
\newcommand{\oo}{{\scriptstyle\mathcal{O}}}

\DeclareFontFamily{U}{mathb}{\hyphenchar\font45}
\DeclareFontShape{U}{mathb}{m}{n}{ <-6> matha5 <6-7> matha6 <7-8>
	mathb7 <8-9> mathb8 <9-10> mathb9 <10-12> mathb10 <12-> mathb12 }{}
\DeclareSymbolFont{mathb}{U}{mathb}{m}{n}

\DeclareMathAccent{\abxring}{0}{mathb}{"38}

\DeclareFontFamily{U}{mathb}{\hyphenchar\font45}
\DeclareFontShape{U}{mathb}{m}{n}{ <-6> matha5 <6-7> matha6 <7-8>
	mathb7 <8-9> mathb8 <9-10> mathb9 <10-12> mathb10 <12-> mathb12 }{}
\DeclareSymbolFont{mathb}{U}{mathb}{m}{n}

\newcommand{\uinfty}{u_\infty}

\newcommand{\tu}{\widetilde u}

\DeclareMathOperator{\rad}{rad}

\DeclareOldFontCommand{\it}{\normalfont\itshape}{\mathit}

\newcommand{\supp}{\text{\rm supp}}

\newcommand{\iii}[1]{{\left\vert\kern-0.25ex\left\vert\kern-0.25ex\left\vert #1 \right\vert\kern-0.25ex\right\vert\kern-0.25ex\right\vert}}

\numberwithin{equation}{section}

\newcommand{\todo}[2][]{}

\begin{document}

\renewcommand{\thefootnote}{\fnsymbol{footnote}}
\footnotetext{\emph{Keywords:} positivity, higher-order operators, Schr\"odinger equation, variational methods, critical growth, exponential nonlinearities.}
\renewcommand{\thefootnote}{\fnsymbol{footnote}}
\footnotetext{\emph{Mathematics Subject Classification 2020:} 35B09, 35J35, 35J10, 35J91, 35B33.}
\renewcommand{\thefootnote}{\arabic{footnote}}

\title{Positive radial ground states for nonlinear biharmonic equations:\\ maximum principles and expanding-domain approximation}

\date{\today}
\author{
	Daniele Cassani\thanks{Dipartimento di Scienza e Alta Tecnologia, Università degli Studi dell'Insubria and RISM-Riemann International School of Mathematics, Villa Toeplitz, Via G.B. Vico, 46 - 21100 Varese, Italy. Email: daniele.cassani@uninsubria.it},\,
	Zhisu Liu\thanks{Center for Mathematical Sciences/School of Mathematics and Physics, China University of Geosciences, Wuhan, Hubei, 430074, PR China. Email: liuzhisu@cug.edu.cn},\,
	Giulio Romani\thanks{Dipartimento di Scienze Matematiche, Informatiche e Fisiche, Universit\`{a} degli Studi di Udine, Via delle Scienze 206 - 33100 Udine, Italy Email: giulio.romani@uniud.it},\,
	Antonio Tarsia\thanks{Dipartimento di Matematica, Università di Pisa, Largo B. Pontecorvo, 5 -- 56127 Pisa. E-mail: antonio.tarsia@unipi.it}
}

\maketitle


\abstract{We prove the existence of nonnegative radial ground states for a class of nonlinear biharmonic equations in $\R^N$, covering subcritical and critical nonlinearities. The solutions are obtained as limits of clamped ground states on expanding balls. The Cassani-Tarsia homogeneous maximum principle in \cite{CT} makes the approximating states positive. A variational decomposition on the radial Nehari manifold then excludes every zero sphere of positive radius, so the limiting ground state is positive on $\R^N\setminus\{0\}$. When the fourth order operator factorises into two second-order operators with positive resolvents, the conclusion upgrades to $u>0$ throughout $\R^N$, including at the origin. We also give an explicit counterexample showing that the tempting extension of the homogeneous maximum principle to super-solutions with arbitrary nonhomogeneous boundary data is false, even with an arbitrarily large positive zeroth-order coefficient.}

\section{Introduction}

Maximum principles express a simple but powerful idea: information prescribed by an equation and on the boundary can control the sign and the extreme values of a solution throughout a domain. For second-order elliptic equations this principle is one of the main bridges between local differential information and global qualitative behaviour. It underlies comparison and uniqueness arguments, provides a priori bounds, orders sub- and supersolutions, and is a basic ingredient in monotone iteration and fixed-point constructions; see, for example, \cite{PW,GT}. In the viscosity-solution setting, related comparison principles select the physically relevant solution of fully nonlinear equations and Hamilton--Jacobi--Bellman problems \cite{CIL}. Thus a maximum principle is not merely a sign lemma: it is an organizing tool shared by nonlinear analysis, geometric PDEs, optimal control, and the calculus of variations.

Positivity also connects elliptic PDEs with spectral and operator theory. When the resolvent of an elliptic operator maps nonnegative data to nonnegative functions, it behaves as an order-preserving operator on a Banach lattice. Compactness and strong positivity then bring Perron--Frobenius and Krein--Rutman ideas into the PDE setting, singling out a principal eigenvalue and a positive eigenfunction \cite{KR,BNV}. These facts influence stability, bifurcation from the principal eigenvalue, and the structure of nonlinear solution branches. In evolution problems the analogous question is whether the generated semigroup preserves positivity. Even when instantaneous positivity fails for a fourth-order operator, positivity can reappear asymptotically; this phenomenon is formalized by the theory of eventually positive semigroups and has concrete fourth-order elliptic examples \cite{DGK}. Elliptic maximum principles, positive Green functions, inverse-positive operators, and positive or eventually positive flows are therefore different manifestations of the same order structure.

The distinction between second- and higher-order equations is especially important in applications. Fourth-order operators describe the bending of elastic beams and plates, arise in phase-field models such as the Cahn--Hilliard equation, and enter pattern-selection models of Swift--Hohenberg type \cite{GGS,CH,CrossH}. They also occur in geometric energies and in dispersive wave equations, where a biharmonic term changes the balance between localization and oscillation. In these models the sign of a stationary state may encode a physical density, an amplitude, a deflection relative to an obstacle, or the profile of a coherent structure. Positivity can consequently rule out nodal interfaces, identify the lowest-energy branch, support symmetry and comparison arguments, and constrain the long-time dynamics of the corresponding evolution. At the same time, the absence of a general maximum principle warns that intuition inherited from diffusion may be misleading for plate, phase-field, and mixed-dispersion models.

At the linear level, recovering a positivity preserving property yields inverse positivity, sign information for Green operators, and access to principal spectral data. At the nonlinear level, it can distinguish ground states from excited nodal states and combine variational existence with symmetry or comparison arguments. For singular perturbations and expanding-domain limits, weak convergence of positive approximating solutions gives only nonnegativity. In our radial least-energy setting, contact with zero is instead excluded away from the centre by decomposing the solution into its positivity components.

Positivity for higher-order elliptic equations is a long-standing problem, with roots in Boggio's explicit Green function for balls and in Hadamard's early investigations more than a century ago \cite{Boggio,GGS}. Unlike the second-order case, where positivity follows under broad hypotheses from maximum-principle and Harnack theory, the positivity preserving property (PPP)---that is, inheritance of the sign of the data by the solution---depends sensitively on the operator, the domain, and the boundary conditions. For the clamped biharmonic problem, where both the value of the unknown and its normal derivative vanish at the boundary, positivity may fail even on smooth bounded domains. It is recovered on balls and on certain perturbations of balls, while in general one is led to weaker notions such as local, eventual, or dominant positivity. The issue is naturally encoded by the sign of the Green function, whose positive and negative parts determine whether the inverse operator preserves order; see \cite{GGS,GR}. Recent work also shows that sign behaviour for fundamental solutions of general higher-order operators differs sharply from that of products of second-order operators \cite{GRS}. Because comparison, truncation, and monotone methods are central to nonlinear analysis, this failure helps explain why the qualitative theory of higher-order equations remains less complete than its second-order counterpart.

Very recently, however, a new Harnack type inequality opened the way to overcome the lack of maximum principle for higher-order Dirichlet boundary value problems, when the nonpositive highest-order operator is compensated by a sufficiently large positive second-order operator, see \cite{CT} and its extension in \cite{CPT}. Taking advantage of this result, here we establish nonnegative radial ground states, positive away from the origin, for a large class of biharmonic nonlinear equations in the whole space; in the real-factorization regime we obtain positivity also at the origin. More precisely, let $\gamma,\gamma_0\in\R$ and $f$ be a \textit{nonnegative} superlinear and subcritical or critical nonlinearity (the specific assumptions will be given later) and consider
\begin{equation}\label{eq}
	\Delta^2u-\gamma\Delta u+\gamma_0u=f(u)\quad\ \mbox{in}\ \,\R^N,\ \,N\geq2\,.
\end{equation}
This equation can be interpreted as a biharmonic correction of the Schr\"odinger equation for standing waves with constant potential, which can be retrieved by letting $\gamma\to+\infty$, see e.g. \cite{BN}. Many authors have studied existence and multiplicity results for \eqref{eq}, especially for the case $\gamma=0$ and $\gamma_0$ either constant or an $x$-dependent potential with various assumptions on its behaviour at $\infty$, and $f$ with either subcritical or critical polynomial growth (for $N\geq5$), or exponential growth (for $N=4$), so that an exhaustive list is not possible. We cite, among others, \cite{AdO,Sani_subcritical,Sani_critical,YT,dOM,BN,BCM,BCGJ,MSV,CLZ,M}. The list of works dealing with positivity is much shorter. For critical polynomial nonlinearity with decaying potential in dimension $N\geq5$ the question is addressed in \cite{AdO}. The subcritical polynomial case $f(u)=|u|^{p-1}u$ for $p>1$ has been investigated in \cite{BN,M}. For large values of $\gamma$ depending on $\gamma_0$, Bonheure and Nascimento \cite{BN} rewrite the equation as a cooperative system and obtain positivity and symmetry information. For $\gamma=0$ and $\gamma_0=1$, Ma \cite{M} proved the existence of positive radially symmetric ground states through a Talenti-type comparison argument involving $\sqrt{\Delta^2+I}$. In greater generality, positivity was addressed in \cite{dAG} for distributional solutions of equations of the form $\cP(-\Delta)u=\mu$ in $\R^N$, where $\mu$ is a positive measure. When the roots of $\cP$ satisfy an appropriate sign condition, a positive fundamental-solution representation yields positivity of $u$. Applied to \eqref{eq}, however, this imposes both a dimensional restriction and an algebraic restriction on $\gamma$ and $\gamma_0$; for example, $\gamma=\gamma_0=1$ gives the nonreal roots of $t^2+t+1$.

\vskip0.2truecm

The main aim of this manuscript is to prove positivity of radial ground state solutions for a large class of higher-order nonlinear equations in $\R^N$. With \textit{radial ground state} we mean a weak solution which has the least energy among radial solutions in $H^2(\R^N)$, namely solutions in the subspace of radially symmetric functions $H^2_{\rad}(\R^N)$. Our result applies to both subcritical and critical nonlinearities, in large and small dimensions, and without restrictions on the values of $\gamma\geq0$ and $\gamma_0>0$. We focus on the case in which the biharmonic operator is perturbed by a constant coefficient second-order operator as in \eqref{eq}, but we envision that our method can be applied for a more general class of higher-order operators, potentials, and $x$-dependent nonlinearities.

Following ideas going back to Berestycki and Lions \cite{BL_R}, we approximate \eqref{eq} on balls $B_R$ with clamped boundary conditions. Variational mountain-pass methods give radial ground states $u_R$, whose zero extensions belong to the common space $H^2(\R^N)$. For large $R$, the homogeneous maximum principle of \cite{CT,CPT} gives $u_R>0$ in $B_R$. Compactness then produces a nonnegative radial weak limit $\uinfty$ with least radial energy. Its strict positivity away from the origin follows from the least-energy structure: two distinct radial positivity components would each lie on the Nehari manifold and force the energy to be at least twice the ground-state level. If $\gamma^2\geq4\gamma_0$, the operator factors into two positive second-order resolvents, yielding full positivity without any separate monotonicity argument. This also identifies the exact role of \cite{CT,CPT}: it controls the clamped approximants, but it does not yield a maximum principle for arbitrary nonhomogeneous boundary data. Section \ref{Section_MaxPrSupersol} gives an explicit counterexample to that extension. The subcritical and critical cases require different compactness arguments but share the same variational positivity mechanism.
\vskip0.2truecm

To give a flavour of the results before the detailed assumptions, we state the existence conclusion and the obstruction that fixes the scope of the maximum-principle argument:
\begin{thmA}\label{ThmA}
	Let $\gamma\geq0$ and $\gamma_0>0$, and let $f$ be a positive and continuous nonlinearity. Under suitable growth conditions on $f$ at $\infty$ and at $0$, there exists a nonnegative radial ground state solution, positive on $\R^N\setminus\{0\}$, for \eqref{eq}. If moreover $\gamma^2\geq4\gamma_0$, this solution is positive throughout $\R^N$.
\end{thmA}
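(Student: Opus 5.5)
The argument follows the scheme sketched before the statement: approximate on expanding balls, pass to the limit, then extract positivity from the least-energy structure.

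\emph{Approximation.} For $R>0$ consider the clamped problem
$$\Delta^2u-\gamma\Delta u+\gamma_0u=f(u)\ \text{ in } B_R,\qquad u=\partial_\nu u=0 \text{ on }\partial B_R,$$
in $H^2_{0,\rad}(B_R)$ with energy
$$I_R(u)=\tfrac12\int(|\Delta u|^2+\gamma|\nabla u|^2+\gamma_0u^2)-\int F(u).$$
Since $\gamma\ge0$ and $\gamma_0>0$, the quadratic part is equivalent to the $H^2$ norm, uniformly in $R$. Under superlinear growth at $0$ and an Ambrosetti--Rabinowitz type condition at $\infty$, the mountain-pass level $c_R$ coincides with the infimum over the radial Nehari manifold $\cN_R$. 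In the subcritical case this level is attained, by compactness on bounded domains. In the critical case (Sobolev for $N\ge5$, Adams/exponential for $N=4$) I would show that $c_R$ lies below the first noncompactness threshold, using concentrating test functions, and conclude again that it is attained by some $u_R$.

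The sets $\cN_R$ increase with $R$ through zero extension, so $c_R$ is nonincreasing. It is also bounded below by a positive constant independent of $R$, because $f(s)=o(s)$ at $0$ together with the embedding gives $\|u\|\ge\delta>0$ on $\cN_R$. For $R$ large the homogeneous maximum principle of \cite{CT,CPT} applies to the clamped problem with $f(u_R)\ge0$ as right-hand side: the lower-order coefficients are constant and the ball is large. It yields $u_R>0$ in $B_R$.

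\emph{Limit.} The family $(u_R)$ is bounded in $H^2_{\rad}(\R^N)$, so a subsequence converges weakly to some $\uinfty$. The compact embedding of $H^2_{\rad}$ into $L^q$, $2<q<2^{**}$ (Strauss), lets us pass to the limit in the nonlinear terms for test functions of compact support. Hence $\uinfty\ge0$ is a weak solution on $\R^N$. Nontriviality follows from the lower bound on $\|u_R\|$ together with Nehari: $\int f(u_R)u_R\ge\delta^2$, and this quantity passes to the limit by radial compactness.

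In the critical case, radial compactness fails at the top exponent. I would use the energy bound $c_R\le c_{R_0}<$ threshold, a Brezis--Lieb type splitting, and concentration-compactness at the origin to exclude vanishing and concentration.

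Lower semicontinuity together with $\uinfty\in\cN$ gives $c\le I(\uinfty)\le\lim c_R$, where $c$ is the radial ground-state level on $\R^N$. Conversely, density of compactly supported radial functions gives $\lim c_R\le c$. Hence $\uinfty$ is a radial ground state.

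\emph{Positivity away from $0$.} Suppose $\uinfty(r_0)=0$ for some $r_0>0$. Since $\uinfty\ge0$ is $C^1$, $r_0$ is a minimum, so $\uinfty'(r_0)=0$ as well. Set $u_1=\uinfty\chi_{B_{r_0}}$ and $u_2=\uinfty\chi_{B_{r_0}^c}$. Because $\uinfty$ and its first derivative vanish at $r_0$, both pieces lie in $H^2$ and
$$\int\Delta u_1\Delta u_2=\int\nabla u_1\cdot\nabla u_2=0.$$
Testing the equation with $u_i$ then gives $\langle I'(u_i),u_i\rangle=0$. If both pieces are nonzero they lie on $\cN$, so $I(\uinfty)=I(u_1)+I(u_2)\ge2c>c$, a contradiction.

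If one piece vanishes, the zero set of $\uinfty$ contains a ball or an exterior region. On that region $f(\uinfty)=0$ pointwise, and I would invoke unique continuation for the ODE to reach a contradiction. This is the delicate step: the radial ODE has a singular point only at $r=0$, so vanishing of all derivatives at $r_0$, or on an interval, forces $\uinfty\equiv0$ by uniqueness for the regular fourth-order ODE, provided $f$ is locally Lipschitz or at least $f(0)=0$ with suitable regularity.

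\emph{Origin when $\gamma^2\ge4\gamma_0$.} Write
$$\Delta^2-\gamma\Delta+\gamma_0=(-\Delta+a)(-\Delta+b),\qquad a,b=\frac{\gamma\pm\sqrt{\gamma^2-4\gamma_0}}2>0.$$
Then
$$\uinfty=(-\Delta+a)^{-1}(-\Delta+b)^{-1}f(\uinfty),$$
and the Bessel kernels are strictly positive. Since $f(\uinfty)\ge0$ and is nonzero, $\uinfty>0$ everywhere.

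I expect the main obstacle to be the critical-case compactness, namely ruling out concentration of $u_R$ as $R\to\infty$, together with the verification that the maximum principle of \cite{CT} applies uniformly for large $R$.
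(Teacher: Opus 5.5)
Your proposal is correct and follows essentially the paper's route. You use radial clamped mountain-pass solutions on $B_R$, made positive for large $R$ by the homogeneous maximum principle (Lemma \ref{Pos_CT}); the paper gets the uniformity in $R$ you were worried about simply by rescaling $B_R$ to $B_1$, which turns the hypothesis into $R^2\gamma+R^4\gamma_0>\mu_0(B_1)$. You then pass to a nonnegative nontrivial limit identified as a radial ground state, exclude zero spheres by splitting into two Nehari pieces of total energy at least $2c$ plus ODE uniqueness away from $r=0$, and use the Bessel-kernel factorization at the origin — all as in the paper.

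The deviations are minor. You split at a single zero instead of into positivity components. You also obtain $\lim_{R\to\infty}c_R\le c$ by density of compactly supported radial functions, together with the Fatou bound $I(u_\infty)\le\lim_{R\to\infty}c_R$, rather than by truncating an a priori known whole-space ground state; this makes the separate existence of that ground state unnecessary.
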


\begin{thmB}\label{ThmB}
	There is no data-independent extension of the homogeneous clamped maximum
	principle to boundary inequalities $u\geq0$ and
	$\partial_\nu u\leq0$. Already on a disk, arbitrarily large values of
	$\gamma_0$ admit smooth nonnegative data and a smooth nonnegative solution
	with strict boundary inequalities but with an interior zero circle.
\end{thmB}

\vskip0.4truecm
\textbf{Notation}: $\int$ stands for $\int_{\R^N}$, and the differential $\dd x$ is often omitted too. The ball centered in $0$ with radius $R$ is denoted by $B_R$. 
We denote the normal derivative of the function $u$ by $\partial_\nu u$, the positive part by $u_+:=\max\{u,0\}$, and the tensor of all its $k$-derivatives by $D^ku$ for $k\in\N$, with the convention that $D^0u=u$. The space of the infinitely differentiable functions which are compactly supported in a set $\Omega$ is denoted by $C^\infty_0(\Omega)$. The $L^p$-norm is denoted by $\|\cdot\|_p$, and the conjugate H\"older exponent of $p\geq 1$ is $p':=\frac p{p-1}$. 
The symbol $\les$ is used when an inequality is true up to an omitted structural constant.

\subsection{Main results}

Let us now describe in more detail our results. In the sequel we always suppose $N\geq2$, $\gamma\geq0$, $\gamma_0>0$. Let us start by recalling the different concepts of ``criticality'' for a nonlinearity according to the spacial dimension $N$. Motivated by the Sobolev embedding theorem, {the critical growth in dimension $N\geq5$ is the map $t\mapsto t^{2_*-1}$}, where $2_*:=\frac{2N}{N-4}$ is the critical exponent in the embedding $H^2(\R^N)\hookrightarrow L^p(\R^N)$. On the other hand, in dimension $N=4$ we say that $f$ has critical growth in the sense of Trudinger-Moser if there exists $\alpha_0>0$ such that
\begin{equation}\label{crit_def}
	\lim_{t\to+\infty}\frac{f(t)}{\e^{\alpha t^2}}=\begin{cases}
		0&\quad\mbox{for}\ \ \alpha>\alpha_0,\\
		+\infty&\quad\mbox{for}\ \ \alpha<\alpha_0.\end{cases}
\end{equation}
This notion of criticality is motivated by the well-known Adams' inequality, proved first by Adams in bounded domains and later extended by Ruf and Sani in \cite{RS} for the whole space, and which reads as follows: There exists a constant $C>0$ such that
\begin{equation}\label{Adams_RS}
	\sup_{u\in H^2(\R^4),\|u\|_{H^2(\R^4)}\leq1}\,\int_{\R^4}\left(\e^{\alpha u^2}-1\right)\dd x\begin{cases}
		\leq C&\quad\mbox{if}\ \ \alpha\leq32\pi^2,\\
		=+\infty&\quad\mbox{if}\ \ \alpha>32\pi^2.\end{cases}
\end{equation}
As pointed out in \cite{Sani_critical}, the same behaviour holds if the $H^2$-norm is replaced by the equivalent norm $\|\cdot\|_{H^2,\tau}$ defined by
\begin{equation*}
	\|\cdot\|_{H^2,\tau}^2:=\|\Delta\cdot\|_2^2+2\tau\|\nabla\cdot\|_2^2+\tau^2\|\cdot\|_2^2
\end{equation*}
for an arbitrary constant $\tau>0$. In other words, one has
\begin{equation}\label{Adams_RS-tau}
	\sup_{u\in H^2(\R^4),\|u\|_{H^2,\tau(\R^4)}\leq1}\,\int_{\R^4}\left(\e^{\alpha u^2}-1\right)\dd x\begin{cases}
		\leq C&\quad\mbox{if}\ \ \alpha\leq32\pi^2,\\
		=+\infty&\quad\mbox{if}\ \ \alpha>32\pi^2.\end{cases}
\end{equation}

We split now our investigation according to the growth of the nonlinearity $f$, analysing separately the cases of a subcritical and a critical growth. { We just consider dimensions $N\geq4$, since for small dimensions $N\leq3$ our results hold in a simpler way, taking advantage of the Morrey--Sobolev embedding $H^2(\R^N)\hookrightarrow C^{0,\alpha}(\R^N)$ for suitable $\alpha>0$, and hence also into $L^\infty(\R^N)$. In dimensions $N\geq4$, continuity does not follow from the $H^2$ embedding alone; for the solutions considered below it follows instead from the equation by local elliptic regularity and the standard bootstrap allowed by our growth assumptions.} For remarks about our assumptions, as well as their consequences, see Section \ref{Prel}.

In the following we denote by $F(t):=\int_0^tf(\tau)\dd\tau$ the primitive of $f$ which vanishes at $0$.

\paragraph{The subcritical case}
We assume the following conditions on the nonlinearity $f\in C^1(\R)$:
\begin{enumerate}[($f_1$)]
	\item $f(t)>0$ for $t>0$ and $f(t)\equiv0$ for $t\leq0$;
	\item $f(t)=\oo(t)$ as $t\to0$;
	\item $f$ is subcritical, in the sense that
	\begin{itemize}
		\item if $N\geq 5$: there exists $p\in\left(1,2_*-1\right)$ such that $f(t)\les 1+t^p$ as $t\to+\infty$;
		\item if $N=4$: $\displaystyle{\lim_{t\to+\infty}\frac{f(t)}{\e^{\alpha t^2}}=0}$ for all $\alpha>0$;
	\end{itemize}
	\item there exists $\mu>2$ such that $\mu F(t)\leq tf(t)$ for all $t>0$;
	\item the map $t\mapsto\frac{f(t)}t$ is monotone increasing in $\R^+$.
\end{enumerate}

\noindent In this setting, Theorem \ref{ThmA} reads as follows:

\begin{thm}\label{Thm2}
	Let $\gamma_0>0$ and $\gamma\geq0$, and suppose conditions ($f_1$)-($f_5$) hold. Then there exists a nonnegative radial ground state solution, positive on $\R^N\setminus\{0\}$ for \eqref{eq}. If moreover $\gamma^2\geq4\gamma_0$, the solution is positive throughout $\R^N$.
\end{thm}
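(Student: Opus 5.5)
We follow the expanding-domain scheme sketched in the introduction. On the ball $B_R$ we consider the functional
\[
I_R(u)=\frac12\int_{B_R}\big(|\Delta u|^2+\gamma|\nabla u|^2+\gamma_0u^2\big)-\int_{B_R}F(u)
\]
on $H^2_{0,\rad}(B_R)$. By ($f_1$)--($f_4$), together with the compact radial embedding of $H^2_0(B_R)$ into $L^q$ (or the Adams inequality \eqref{Adams_RS-tau} in $N=4$ with subcritical growth), $I_R$ has mountain-pass geometry and satisfies Palais--Smale. This gives a critical point $u_R$ at the level $c_R$. By ($f_5$), $c_R$ coincides with the infimum of $I_R$ on the radial Nehari manifold $\cN_R$, so $u_R$ is a radial ground state.

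Next we apply the Cassani--Tarsia homogeneous maximum principle \cite{CT,CPT}. Since $f(u_R)\ge0$ and the clamped boundary data are homogeneous, it gives $u_R>0$ in $B_R$ for $R$ large; here we need to check that the threshold on $R$ depends only on $\gamma,\gamma_0$. Extending $u_R$ by zero, the levels $c_R$ are nonincreasing in $R$ and bounded below by a positive constant uniform in $R$, because $f(t)=\oo(t)$ makes the norm on $\cN_R$ uniformly bounded away from $0$. Hence $(u_R)$ is bounded in $H^2_{\rad}(\R^N)$ via ($f_4$), and we extract a weak limit $\uinfty\ge0$.

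The main obstacle is nontriviality and the identification of the energy level. We use the Strauss-type compact embedding $H^2_{\rad}(\R^N)\hookrightarrow L^q$ for $2<q<2_*$, and for $N=4$ the subcritical exponential growth combined with Adams' inequality. This yields $\int f(u_R)u_R\to\int f(\uinfty)\uinfty$, and passing to the limit in the weak formulation tested against $C^\infty_0$ radial functions shows that $\uinfty$ solves \eqref{eq}. If $\uinfty=0$, then $\|u_R\|^2=\int f(u_R)u_R\to0$, which contradicts the uniform lower bound. So $\uinfty\in\cN_\infty$. Lower semicontinuity applied to $I-\frac12 I'\cdot u=\int(\frac12 f(u)u-F(u))$ (Fatou, using $\frac12 f u-F\ge0$) gives $I(\uinfty)\le\lim c_R$. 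Density of compactly supported radial functions in $\cN_\infty$, after projection onto Nehari by ($f_5$), gives $\lim c_R=c_\infty$. Thus $\uinfty$ is a radial ground state.

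For positivity off the origin, we argue by contradiction. Suppose $\uinfty(r_0)=0$ for some $r_0>0$; $\uinfty$ is continuous by bootstrap. Then split $\uinfty=v_1+v_2$, with $v_1=\uinfty\chi_{B_{r_0}}$ and $v_2=\uinfty\chi_{B_{r_0}^c}$. The key point is that $v_i\in H^2$: since $\uinfty\ge0$ attains its minimum at $r_0$, we have $\uinfty'(r_0)=0$, so the cut does not create a jump in the first derivative. Both pieces are nonzero, since otherwise $\uinfty$ would vanish on a ball or on an exterior region, contradicting the equation with $f\ge0$ via unique continuation for the radial ODE. The pieces have disjoint supports, so testing the equation with each $v_i$ gives $v_i\in\cN_\infty$. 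Therefore $I(\uinfty)=I(v_1)+I(v_2)\ge2c_\infty$, contradicting $c_\infty>0$. Finally, when $\gamma^2\ge4\gamma_0$ we write the operator as $(-\Delta+a)(-\Delta+b)$ with $a,b\ge0$ real and $ab=\gamma_0>0$, so $a,b>0$. Both resolvents have positive kernels, so $\uinfty=(-\Delta+a)^{-1}(-\Delta+b)^{-1}f(\uinfty)$. Since $f(\uinfty)\ge0$ and $f(\uinfty)\not\equiv0$, this gives $\uinfty>0$ everywhere.
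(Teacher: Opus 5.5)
Your proposal is correct and follows essentially the paper's route. The steps match: clamped radial ground states on expanding balls, positivity of $u_R$ for large $R$ from the homogeneous maximum principle, identification of the limit level with the radial Nehari infimum, the energy-doubling cut at a zero sphere (a slightly more direct form of Lemma \ref{lem:no-zero-spheres}), and the factorised positive resolvent at the origin. For the level identification you use density of compactly supported radial functions plus Nehari projection, whereas the paper cuts off an a priori known radial ground state $\bar u$.

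The one point you leave open is the uniformity in $R$ of the maximum-principle threshold. The paper settles it by rescaling $u_R$ to $B_1$, where the coefficients become $R^2\gamma$ and $R^4\gamma_0$, so Lemma \ref{Pos_CT} applies once $R^2\gamma+R^4\gamma_0>\mu_0(B_1)$.
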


\paragraph{The critical case}

Let us start with the case $N\geq5$. Here, we deal with subcritical perturbations of the critical nonlinearity, that is
\begin{equation}\label{NL-C}
	f(t)=t_+^{2_*-1}+g(t)
\end{equation}
with the following assumptions on $g\in C^1(\R^+)$:
\begin{enumerate}
	\item[($f_3'$)] there exist $q\in(1,2_*-1)$ and $\eta>0$ such that
	\begin{itemize}
		\item $g(t)\leq C(1+t^q)$ for all $t>0$ and $g\equiv0$ for all $t<0$;
		\item $g(t)\geq\eta t^{p-1}$ with either $\eta>0$ and $p\in(\overline p(N),2_*)$, or $p\in(2,\overline p(N))$ and $\eta>\eta_0$ for some $\eta_0>0$ large enough. Here we define
		{\begin{equation*}
			\overline p(N)=\begin{cases}
				\frac{2(N-2)}{N-4}&\quad\mbox{for}\ \ N\geq8\ \ \mbox{and}\ \ N=6,\\
				\frac83&\quad\mbox{for}\ \ N=7,\\
				8&\quad\mbox{for}\ \ N=5.\end{cases}
		\end{equation*}}
	\end{itemize}
\end{enumerate}

\begin{thm}\label{Thm-C-5-general}
	Let $\gamma_0>0$, $\gamma\geq0$, and $f$ as in \eqref{NL-C}, with $g$ satisfying ($f_1$), ($f_2$), ($f_3'$), ($f_4$), ($f_5$). Then there exists a nonnegative radial ground state solution, positive on $\R^N\setminus\{0\}$ for \eqref{eq}. If moreover $\gamma^2\geq4\gamma_0$, the solution is positive throughout $\R^N$.
\end{thm}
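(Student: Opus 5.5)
We follow the expanding-domain scheme described in the introduction, as for Theorem \ref{Thm2}. The compactness step must now handle the critical term $t_+^{2_*-1}$, and the lower bound on $g$ in ($f_3'$) is what places the energy below the compactness threshold. Let
\[
I(u)=\tfrac12\int\big(|\Delta u|^2+\gamma|\nabla u|^2+\gamma_0u^2\big)-\int F(u).
\]
Let $c_R$ denote the mountain-pass (equivalently, radial Nehari) level on $H^2_{0,\rad}(B_R)$, and $c_\infty$ the corresponding level on $H^2_{\rad}(\R^N)$.

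\textbf{Step 1 (clamped approximants).} On $B_R$ the embedding $H^2_0(B_R)\hookrightarrow L^{2_*}$ is not compact, so we use Brezis--Nirenberg reasoning. The geometry follows from ($f_2$) and ($f_4$). A Palais--Smale sequence at level $c<\frac2N S^{N/4}$ is relatively compact, where $S$ is the best constant of $\|\Delta u\|_2^2\geq S\|u\|_{2_*}^2$; the lower-order terms do not change this threshold. We then test with truncated Aubin--Talenti bubbles $u_\varepsilon=\varphi U_\varepsilon$ in a fixed ball $B_{R_0}\subset B_R$. The standard expansions give $\|\Delta u_\varepsilon\|_2^2=S^{N/4}+O(\varepsilon^{N-4})$. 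The $\gamma$ and $\gamma_0$ terms cost $O(\varepsilon^2)$ (for $N\ge7$, with logarithmic or power modifications for $N=5,6$), while the gain from $\eta\int u_\varepsilon^p$ is of order $\varepsilon^{N-\frac{(N-4)p}{2}}$ (with corrections). The gain dominates exactly when $p>\overline p(N)$. In the remaining range, a large $\eta>\eta_0$ compensates at a fixed scale. This yields $\sup_t I(tu_\varepsilon)<\frac2NS^{N/4}$.

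Since the test function is supported in $B_{R_0}$ independently of $R$, and $c_R$ is nonincreasing in $R$, we obtain the uniform bound $c_R\le c_{R_0}<\frac2NS^{N/4}$ for all $R\geq R_0$. Minimizing over the radial Nehari manifold, using ($f_5$), produces a radial ground state $u_R$. For $R$ large, the homogeneous maximum principle of \cite{CT,CPT} (large second-order part relative to the domain) gives $u_R>0$ in $B_R$.

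\textbf{Step 2 (passage to the limit).} We extend $u_R$ by zero. The norms $\|u_R\|_{H^2}$ are bounded via ($f_4$) and $c_R\le c_{R_0}$. Take a radial weak limit $\uinfty\ge0$. We must exclude vanishing: both $\uinfty=0$ and loss of mass at the origin, since concentration at infinity is ruled out by Strauss-type radial decay. Radial compactness in $L^q$ for $2<q<2_*$ controls $g$. If $\uinfty=0$, then $\int g(u_R)u_R\to0$, and the Nehari identity reduces to the pure critical problem, forcing $\liminf c_R\ge\frac2NS^{N/4}$. This contradicts the uniform strict bound. A Brezis--Lieb splitting then shows that $\uinfty$ is a nontrivial weak solution with $I(\uinfty)\le\liminf c_R$.

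Since $c_R\downarrow c_\infty$ (density of compactly supported radial functions), $\uinfty$ is a radial ground state. Continuity of $\uinfty$ follows by the bootstrap indicated in the paper; this is the delicate point in the critical case and is done via Brezis--Kato type iteration.

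\textbf{Step 3 (positivity).} Suppose $\uinfty(r_0)=0$ for some $r_0>0$. The set $\{\uinfty>0\}$ then splits into two radial pieces $\{r<r_0\}$ and $\{r>r_0\}$. Define $v_1=\uinfty\chi_{B_{r_0}}$ and $v_2=\uinfty-v_1$. We would use that $r_0$ is a minimum point of a $C^1$ nonnegative function, so $\uinfty'(r_0)=0$. This gives $v_i\in H^2_\rad$, with all energy integrals additive. Testing the equation with $v_i$ shows $v_i$ lies on the Nehari manifold, whence $c_\infty=I(\uinfty)=I(v_1)+I(v_2)\ge2c_\infty$. This is a contradiction since $c_\infty>0$.

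When $\gamma^2\ge4\gamma_0$, factor
\[
\Delta^2-\gamma\Delta+\gamma_0=(-\Delta+a)(-\Delta+b),\qquad a,b\ge0,\quad ab=\gamma_0>0.
\]
Both resolvents have positive kernels, so $\uinfty=(-\Delta+a)^{-1}(-\Delta+b)^{-1}f(\uinfty)>0$ everywhere, as $f(\uinfty)\ge0$ and $f(\uinfty)\not\equiv0$.

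\textbf{Main obstacle.} I expect the critical energy estimate in Step 1 to be the hardest part, especially the low-dimensional cases $N=5,6,7$ with their exceptional exponents $\overline p(N)$. These must be handled uniformly in $R$ so that vanishing can be excluded in Step 2.
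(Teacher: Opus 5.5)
\textbf{Gap in Step 3.} The splitting $\uinfty=v_1+v_2$ gives a contradiction only if both pieces are nontrivial. Since $0\notin\cN$, if, say, $v_1\equiv0$, the identity $I(\uinfty)=I(v_1)+I(v_2)$ just reads $c_\infty=0+c_\infty$.

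Nothing in your argument excludes the case where $\{r>0:\uinfty(r)>0\}$ is a single interval different from $(0,\infty)$. Examples are $\uinfty\equiv0$ on $\overline{B_{r_0}}$ with $\uinfty>0$ outside, or $\uinfty>0$ in $B_{r_0}$ with $\uinfty\equiv0$ outside. In these configurations every zero produces a trivial piece, and the least-energy argument says nothing.

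The paper handles exactly this case in the second half of Lemma~\ref{lem:no-zero-spheres}, by unique continuation for the radial ODE. Away from $r=0$, the radial form of \eqref{eq} is a regular fourth-order ODE. Its right-hand side $t\mapsto g(t)+t_+^{2_*-1}$ is $C^1$ (because $2_*-1>1$), hence locally Lipschitz. At a positive endpoint of a zero interval one has $w=w'=w''=w'''=0$, so uniqueness for the equivalent first-order system forces $w\equiv0$, contradicting nontriviality.

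You need this step, together with the $C^4$ regularity of $\uinfty$ away from the origin that it presupposes; continuity alone is not enough. The energy splitting by itself only rules out two or more positivity components. Only in the regime $\gamma^2\geq4\gamma_0$ does your resolvent argument make this step superfluous.

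\textbf{Comparison with the paper.} The rest of your plan follows the paper: clamped approximants, positivity of $u_R$ from Lemma~\ref{Pos_CT} after rescaling, a uniform bound $c_R<\frac2NS^{N/4}$ from truncated bubbles in a fixed ball, and factorization when $\gamma^2\geq4\gamma_0$.

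One step is genuinely different and cleaner. The paper first constructs a whole-space radial ground state $\bu$ by minimizing on $\cN$ (Proposition~\ref{lem_conv_crit} with Lemma~\ref{comp}). It then asserts strong $H^2$ convergence of $\tu_k$ and compares $c_R$ with the path $tT_0\phi_R\bu$.

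You instead get $\limsup_R c_R\leq c_{min}$ by density, for an arbitrary $v\in\cN$. The maximizing values of $t$ in $t\mapsto I(tv_n)$ stay bounded because $v_+\not\equiv0$ whenever $v\in\cN$. You then get $c_{min}\leq I(\uinfty)\leq\liminf_R c_R$ from $\uinfty\in\cN$ and Fatou applied to $\frac12f(u)u-F(u)\geq0$.

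This is correct. It makes Proposition~\ref{lem_conv_crit} unnecessary, since existence of a radial ground state comes out of the limit, and it needs no strong convergence. You should state explicitly that $c_\infty$ denotes the radial Nehari level.

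\textbf{Exponent count at $N=7$.} With your own orders (cost $\varepsilon^2$ from the $\gamma$-term, gain $\varepsilon^{7-3p/2}$), the threshold is $p>\frac{10}3$, not $\overline p(7)=\frac83$. The paper reaches $\frac83$ only through the order $\varepsilon^{3/2}$ it assigns to $\|\nabla\psi_\varepsilon\|_2^2$ in \eqref{compu9}. The same computation it uses for $N\geq8$ gives order $\varepsilon$ there (in its scaling) also for $N=7$. So for $\gamma>0$ and small $\eta$, you should not claim the range $p\in(\frac83,\frac{10}3]$ from this test-function estimate.
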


\vskip0.4truecm

In dimension $N=4$ the critical growth is the exponential $t\mapsto\e^{\alpha t^2}$. Note that if a function has critical growth in the sense of \eqref{crit_def} and satisfies ($f_2$), then for fixed $\alpha>\alpha_0$, $q\geq1$ and for any $\varepsilon>0$ one has
\begin{equation}\label{f-C-above}
	|f(t)|\leq\varepsilon|t|+C(\alpha,q,\varepsilon)|t|^{q-1}(\e^{\alpha t^2}-1),\qquad t\in\R,
\end{equation}
where $C(\alpha,q,\varepsilon)>0$, and consequently,
\begin{equation}\label{F-C-above}
	|F(t)|\leq\varepsilon|t|^2+C(\alpha,q,\varepsilon)|t|^q(\e^{\alpha t^2}-1),\qquad t\in\R.
\end{equation}

\begin{thm}\label{Thm-C-4}
	Let $\gamma_0>0$ and $\gamma\geq0$, and $f$ be a critical nonlinearity as in \eqref{crit_def}. In addition to assumptions ($f_1$), ($f_2$), ($f_4$), ($f_5$), suppose $f$ also satisfies
	\begin{enumerate}
		\item[($f_6$)] there exist $t_0>0$ and $M_0>0$ such that $0<F(t)\leq M_0|f(t)|$ for any $|t|\geq t_0$;
		\item[($f_7$)] there exists $\beta_0>0$ such that $\displaystyle{\lim_{t\to+\infty}\frac{tf(t)}{\e^{\alpha_0t^2}}\geq\beta_0}$.
	\end{enumerate}
	Then there exists a nonnegative radial ground state solution, positive on $\R^N\setminus\{0\}$ for \eqref{eq}. If moreover $\gamma^2\geq4\gamma_0$, the solution is positive throughout $\R^N$.
\end{thm}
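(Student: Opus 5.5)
We argue for Theorem \ref{Thm-C-4} along the expanding-domain scheme outlined in the introduction. The subcritical compactness is now replaced by a control of the mountain-pass level below the Adams threshold.

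\emph{Step 1: clamped problems on $B_R$.} For $R>0$ we work in $H^2_{0,\rad}(B_R)$ with the norm $\|u\|^2=\|\Delta u\|_2^2+\gamma\|\nabla u\|_2^2+\gamma_0\|u\|_2^2$ and the energy
\[
I(u)=\tfrac12\|u\|^2-\int F(u).
\]
From \eqref{f-C-above}, \eqref{F-C-above} and the Adams inequality \eqref{Adams_RS-tau}, $I$ has mountain-pass geometry, with constants independent of $R$. Condition $(f_5)$ identifies the mountain-pass level $c_R$ with the infimum over the radial Nehari manifold, so $c_R$ is nonincreasing in $R$. We obtain a ground state $u_R$ provided $c_R<\frac{16\pi^2}{\alpha_0}$. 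In that regime a Palais--Smale sequence is bounded by $(f_4)$. Moreover $\limsup\|u_n\|^2<\frac{32\pi^2}{\alpha_0}$, so $f(u_n)$ is bounded in some $L^{r}$ with $r>1$, which by the Lions-type argument together with $(f_6)$ gives convergence of $\int f(u_n)u_n$ and $\int F(u_n)$.

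The level estimate is where $(f_7)$ enters. We test with Adams--Moser concentrating functions $w_n$, namely the fourth-order analogues of the Moser sequence supported in a fixed ball $B_{\rho}\subset B_R$, for instance those of \cite{Sani_critical}. The assumption $\lim tf(t)\e^{-\alpha_0t^2}\ge\beta_0$ gives $\max_t I(tw_n)<\frac{16\pi^2}{\alpha_0}$ for $n$ large. Since $c_R$ is monotone in $R$, the bound $c_R\le c_{R_0}<\frac{16\pi^2}{\alpha_0}$ holds uniformly for $R\ge R_0$. By the Cassani--Tarsia homogeneous maximum principle \cite{CT,CPT}, for $R$ large the solution $u_R$, which solves the clamped problem with right-hand side $f(u_R)\ge0$, satisfies $u_R>0$ in $B_R$.

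\emph{Step 2: passage to the limit.} Extend $u_R$ by zero. The family is bounded in $H^2_{\rad}(\R^4)$ with $\limsup\|u_R\|^2\le 2\mu c_{R_0}/(\mu-2)$. We take $u_R\rightharpoonup\uinfty$. Radial compactness, namely Strauss-type decay and compact embedding into $L^q$ for $q>2$, combined with the strict bound $\|u_R\|^2<\frac{32\pi^2}{\alpha_0}-\delta$ obtained from the level estimate, makes $f(u_R)$ uniformly integrable. Hence $\uinfty$ is a weak solution, tested against $C^\infty_0$ radial functions and then by density and symmetric criticality against all functions.

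Nontriviality is the main obstacle. We argue by contradiction: if $\uinfty=0$, then $\int f(u_R)u_R\to0$ by the subcritical-in-norm Adams bound plus radial decay and $(f_2)$. The Nehari identity then forces $\|u_R\|\to0$, contradicting the uniform lower bound on the Nehari manifold. Let $c_\infty=\lim c_R$. By Fatou and $(f_4)$ we get $I(\uinfty)\le c_\infty$, while density of $\bigcup_R H^2_0(B_R)$ shows that $c_\infty$ equals the radial ground level on $\R^4$. Therefore $\uinfty\ge0$ is a radial ground state, and it is continuous by elliptic bootstrap.

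\emph{Step 3: positivity.} Suppose $\uinfty(r_0)=0$ for some $r_0>0$. We decompose $\uinfty=v_1+v_2$, with $v_1=\uinfty\chi_{B_{r_0}}$ and $v_2$ the remainder. Because $\uinfty\ge0$ is $C^1$ and attains a minimum at $r_0$, we have $\uinfty'(r_0)=0$, so both pieces lie in $H^2_{\rad}$. Testing the equation against each piece shows that both lie on the Nehari manifold, provided both are nonzero. If one piece vanished, then $\uinfty$ would vanish on a ball or on an exterior domain, and unique continuation for the radial ODE would make $\uinfty\equiv0$. Since the quadratic form splits over disjoint supports, $I(\uinfty)=I(v_1)+I(v_2)\ge2c_\infty$, which contradicts $c_\infty>0$.

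Finally, if $\gamma^2\ge4\gamma_0$, write $\Delta^2-\gamma\Delta+\gamma_0=(-\Delta+a)(-\Delta+b)$ with $a,b\ge0$ real and $ab=\gamma_0>0$, so $a,b>0$. Then $\uinfty=(-\Delta+a)^{-1}(-\Delta+b)^{-1}f(\uinfty)$. The right-hand side is a composition of convolutions with positive Bessel kernels applied to $f(\uinfty)\ge0$, and $f(\uinfty)\not\equiv0$. Hence $\uinfty>0$ everywhere.
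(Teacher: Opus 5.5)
Your overall scheme matches the paper's: expanding clamped balls, the homogeneous maximum principle for $u_R>0$, a weak limit, a sharp level bound below $16\pi^2/\alpha_0$, and then Lemma \ref{lem:no-zero-spheres} and Proposition \ref{prop:factorization-positive} in Step 3. However, there is a genuine gap in the nontriviality argument, and one step that is a real simplification of the paper's.

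\textbf{The gap.} The step that fails as stated is the ``strict bound $\|u_R\|^2<\frac{32\pi^2}{\alpha_0}-\delta$ obtained from the level estimate''. The level estimate together with $(f_4)$ only gives
$\iii{u_R}_R^2\le\frac{2\mu}{\mu-2}c_R<\frac{\mu}{\mu-2}\cdot\frac{32\pi^2}{\alpha_0}$.
The Nehari identity $\iii{u_R}_R^2=2c_R+2\int F(u_R)$ puts the norm below the Adams threshold only if $\int F(u_R)$ is small. For a nonvanishing solution nothing forces that. You invoke this false bound three times: in Step 1 for Palais--Smale sequences, in Step 2 for uniform integrability of $f(u_R)$, and in the nontriviality argument.

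For the weak-solution passage you do not need it. As in Lemma \ref{exists_uinfty-C}, \cite[Lemma 2.1]{dFMR} only requires $\int f(\tu_k)\tu_k=\iii{\tu_k}^2\le C$.

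For nontriviality you do need a sub-threshold bound, and the missing ingredient is Lemma \ref{Lem_F_conv}, namely $\int F(\tu_k)\to\int F(\uinfty)$. It is proved using $(f_6)$ on balls, via local $L^1$ convergence of $f(\tu_k)$, and the radial Lemma \ref{radial_lemma} on exterior domains. Under the contradiction hypothesis $\uinfty\equiv0$ this gives $\iii{\tu_k}^2\to2c_\infty<\frac{32\pi^2}{\alpha_0}$. Then \eqref{Adams_RS-tau} with small $\tau$ yields $\sup_k\int(\e^{\alpha p\tu_k^2}-1)<\infty$ for some $\alpha>\alpha_0$, $p>1$, hence $\int f(\tu_k)\tu_k\to0$. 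Note that ``radial decay and $(f_2)$'' alone do not control $\int_{B_{\bR}}F(\tu_k)$; $(f_6)$ is essential there.

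Similarly, in Step 1 the sub-threshold bound $\|u_n\|^2\to2c_R$ is available only when the weak limit vanishes. Otherwise you need the Lions-type lemma, using $J_R(u)\ge0$ for the limit.

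\textbf{The different route.} Once nontriviality is repaired, your identification of the energy level differs from the paper's and is shorter.

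The paper's route has three parts:
\begin{enumerate}
\item it proves \eqref{conv_ufu} through the concentration lemma \cite[Lemma 3]{Sani_critical};
\item it deduces strong $H^2$ convergence and $J(\uinfty)=c_\infty$;
\item it compares $c_\infty$ with a whole-space radial ground state $\bu$ imported from \cite{Sani_critical}.
\end{enumerate}

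Your route has two parts:
\begin{enumerate}
\item Fatou applied to the nonnegative integrand $\frac12f(u)u-F(u)$ gives $J(\uinfty)\le c_\infty$;
\item density of $\bigcup_RH^2_{0,\rad}(B_R)$, plus continuity of $u\mapsto\max_{t>0}J(tu)$, gives $c_\infty\le c_{min}$.
\end{enumerate}
Since $\uinfty\in\cN$, it follows that $J(\uinfty)=c_{min}$.

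This bypasses the concentration-compactness step entirely. It also needs no a priori existence of a whole-space ground state; it actually produces one. What you lose is the strong convergence $\tu_k\to\uinfty$, which the theorem does not require.
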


\paragraph{The maximum-principle component}
The expanding-ball construction uses the homogeneous clamped maximum
principle of \cite{CPT}. Section \ref{Section_MaxPrSupersol} records its exact
scope, disproves the unrestricted nonhomogeneous extension, and replaces that
invalid step by a radial least-energy component argument.
\vskip0.2truecm
\textbf{Overview}: Section \ref{Prel} gathers the variational and
functional-analytic preliminaries. Section \ref{Section_MaxPrSupersol} recalls
the homogeneous maximum principle, disproves its unrestricted nonhomogeneous
extension, and establishes the radial component lemma used for strict
positivity away from the origin. The subcritical case is treated in Section
\ref{Section_Subcritical}; Sections \ref{Section_crit_largedim} and
\ref{Section_crit_N4} address critical polynomial and critical exponential
growth, respectively.

\section{Preliminaries and consequences of our assumptions}\label{Prel}

The map
$$u\mapsto\iii{u}_{\gamma,\gamma_0}:=\left(\int\left(|\Delta u|^2+\gamma|\nabla u|^2+\gamma_0 u^2\right)\right)^\frac12$$
is a norm on $H^2(\R^N)$ which is equivalent to the standard one, provided $\gamma>-2\sqrt\gamma_0$, see \cite[Lemma 2.2]{BN}. We denote by $C_{eq}$ the best constant in the equivalence, namely such that $\|u\|_{H^2(\R^N)}^2\leq C_{eq}\iii u^2_{\gamma,\gamma_0}$ for all $u\in H^2(\R^N)$. Of course, $C_{eq}$ depends on the parameters $\gamma,\gamma_0$. The same holds for
$$u\mapsto\iii{u}_{\gamma,\gamma_0;R}:=\left(\int_{B_R}\left(|\Delta u|^2+\gamma|\nabla u|^2+\gamma_0 u^2\right)\right)^\frac12,$$
for any $R>0$, see \cite[Lemma 2.1]{BN}. Henceforth we omit the indices $\gamma$ and $\gamma_0$ from the norm by sake of a shorter notation.

\paragraph{Remarks on our assumptions}
	\begin{enumerate}
		\item Since we are interested in \textit{positive} solution, supposing $f\equiv0$ on $\R^-$ as in ($f_1$) is not restricting; ($f_2$) and ($f_3$) are standard growth condition of $f$ a $0$ and at $\infty$, respectively, which classify $f$ as superlinear at $0$ and subcritical or critical at $\infty$; ($f_4$) is the Ambrosetti-Rabinowitz condition, which implies that for $t>0$ one has $F(t)\geq c_1t^\mu-c_2$ for some $c_1,c_2>0$; ($f_5$) is a standard assumption to have a good Nehari geometry.
		\item In the critical cases, we further need the additional assumptions ($f_3'$) when $N\geq5$, and ($f_6$)-($f_7$) when $N=4$. In the former setting, ($f_3'$) prescribes that we perturb the critical nonlinearity by a subcritical term in order to retrieve compactness by a careful analysis of the ground state level (see Lemma \ref{comp} below); in the latter, ($f_6$)-($f_7$) are imposed to be able to profit from the existence results of \cite{Sani_critical}. In particular, from them one obtains a sharp upper bound of the mountain pass level, see \eqref{c_mp_Sani} below, and get compactness. We point out that imposing the de Figuereido-Miyagaki-Ruf condition ($f_7$) in the spirit of \cite{dFMR}, allows to avoid a global growth condition on $f$ from below, as frequently used in the literature, see e.g. \cite{dOM,MSV}, but not of practical verification. Moreover, the constant $\beta_0$ in ($f_7$) is not prescribed to be large as originally in \cite{dFMR}.
		\item As a consequence of ($f_2$)-($f_3$) one infers that if $N\geq5$ for all $\varepsilon>0$ there exists $C_\varepsilon>0$ such that
		\begin{equation}\label{f_above_N>=5}
			f(t)\leq\varepsilon t+C_\varepsilon t^p,\qquad F(t)\leq\varepsilon t^2+C_\varepsilon t^{p+1}\qquad\mbox{for all}\ \,t>0\,,
		\end{equation}
		and, if $N=4$, for all $\varepsilon>0$, $q>0$ and $\alpha>0$ there exists $C=C(\varepsilon,q,\alpha)$ such that
		\begin{equation}\label{f_above_N=4}
			f(t)\leq\varepsilon t+C t^q\left(\e^{\alpha t^2}-1\right),\quad F(t)\leq\varepsilon t^2+Ct^q\left(\e^{\alpha t^2}-1\right)\quad\mbox{for all}\ \,t>0\,.
		\end{equation}
	\end{enumerate}
\begin{remark}
	We prove the existence of a nonnegative radial solution of \eqref{eq} which
	is a ground state among radial solutions and is positive at every point of
	$\R^N\setminus\{0\}$. We do not claim that every solution has this sign.
	The expanding-ball construction supplies nonnegativity of the limit, while
	the exclusion of zero spheres uses essentially its least-energy character.
\end{remark}
\vskip0.2truecm
Before we proceed, we recall a useful estimate in the case $N=4$.
\begin{lem}[{\cite[Lemma 2.1]{Y}}]\label{estimate_Sani}
	Let $\alpha>0$ and $r>1$. Then
	\begin{equation}\label{estimate_Sani_new}
		(\e^{\alpha t^2}-1)^r\leq (\e^{\alpha rt^2}-1)\qquad\mbox{for all}\ \,t>0.
	\end{equation}
\end{lem}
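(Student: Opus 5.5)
The statement to prove is Lemma \ref{estimate_Sani}: $(\e^{\alpha t^2}-1)^r\leq \e^{\alpha rt^2}-1$ for $t>0$, $r>1$, $\alpha>0$. Since only the quantity $s:=\alpha t^2>0$ enters, the plan is to reduce to the one-variable inequality
\begin{equation*}
	(\e^{s}-1)^r\leq \e^{rs}-1\qquad\mbox{for all}\ \,s>0 .
\end{equation*}
Dividing by $\e^{rs}>0$ and setting $x:=\e^{-s}\in(0,1)$, this becomes
\begin{equation*}
	(1-x)^r\leq 1-x^r\qquad\mbox{for all}\ \,x\in(0,1),
\end{equation*}
equivalently $x^r+(1-x)^r\leq 1$.

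The key step is the elementary observation that $r>1$ and $0<y<1$ imply $y^r\leq y$. Apply it to $y=x$ and to $y=1-x$, both lying in $(0,1)$, and add the two bounds:
\begin{equation*}
	x^r+(1-x)^r\leq x+(1-x)=1 .
\end{equation*}
The inequality is in fact strict here, although strictness is not needed. Undoing the substitutions gives \eqref{estimate_Sani_new}.

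I do not expect any step to be a real obstacle. The only thing to check is that the change of variables preserves the direction of the inequality, and it does, because we multiply by the positive factor $\e^{-rs}$. An alternative route, if preferred, is to use superadditivity of the convex function $\phi(a)=a^r$ with $\phi(0)=0$, namely $a^r+b^r\leq(a+b)^r$ for $a,b\geq0$. Taking $a=\e^s-1$ and $b=1$ yields $(\e^s-1)^r+1\leq \e^{rs}$, which is exactly the claim.
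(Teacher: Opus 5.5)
Your proof is correct and complete. With $s=\alpha t^2$ and $x=\e^{-s}\in(0,1)$, dividing by $\e^{rs}>0$ turns the claim into $x^r+(1-x)^r\le 1$, and this follows from $y^r<y$ for $y\in(0,1)$ and $r>1$.

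The paper gives no argument of its own for this lemma; it imports the inequality from \cite{Y}, so there is nothing in the text to compare step by step. The way this inequality is usually checked is by calculus. Set $h(s):=\e^{rs}-1-(\e^s-1)^r$. Then $h(0)=0$ and $h'(s)=r\e^{s}\bigl(\e^{(r-1)s}-(\e^s-1)^{r-1}\bigr)>0$ for $s>0$, so $h\geq 0$.

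Your normalization avoids differentiation and shows what the statement really is: superadditivity of the convex power $a\mapsto a^r$. In fact your two routes are the same inequality. The first is the second with $a=\e^s-1$, $b=1$, divided by $(a+b)^r=\e^{rs}$. Both your argument and the calculus one give strict inequality for $t\neq0$. Both also give the form the paper actually uses before invoking Adams' inequality, namely constant $1$ and no loss in the exponent $\alpha r$.
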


\section{What the maximum principle provides, and an obstruction for nonhomogeneous data}\label{Section_MaxPrSupersol}

The approximation argument uses the established positivity result for the
homogeneous clamped problem.  We recall it in the form needed below.

\begin{lem}[\cite{CPT}]\label{Pos_CT}
Let $\Omega\subset\R^N$, $N\geq2$, be a bounded connected smooth domain
satisfying a uniform interior sphere condition.  Let $f\in H^k(\Omega)$,
$k>\frac N2-4$, satisfy $f\geq0$ and $f\not\equiv0$.  If $u$ solves
\[
\begin{cases}
\Delta^2u-\gamma\Delta u+\gamma_0u=f&\text{in }\Omega,\\
u=\partial_\nu u=0&\text{on }\partial\Omega,
\end{cases}
\]
with $\gamma,\gamma_0\geq0$, then there is
$\mu_0=\mu_0(\Omega,N)$, independent of $f$, such that
$\gamma+\gamma_0>\mu_0$ implies $u>0$ in $\Omega$.
\end{lem}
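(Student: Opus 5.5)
Since $f\geq0$ is arbitrary and $\mu_0$ must not depend on $f$, the statement is equivalent to a sign property of the clamped Green function $G=G_{\gamma,\gamma_0}$ of $L:=\Delta^2-\gamma\Delta+\gamma_0$ on $\Omega$. We need $G\geq0$ on $\Omega\times\Omega$, with $G(x,\cdot)>0$ on a set of positive measure for each $x$; then $u(x)=\int_\Omega G(x,y)f(y)\dd y>0$. The plan is to prove this by a Harnack/comparison argument in the regime $\gamma+\gamma_0$ large, following the strategy of \cite{CT}. The regularity $f\in H^k$, $k>\frac N2-4$, only ensures $u\in H^{k+4}\hookrightarrow C^0(\overline\Omega)$, so pointwise statements make sense. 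Throughout, $L$ is coercive on $H^2_0(\Omega)$ for $\gamma,\gamma_0\geq0$.

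\emph{Step 1 (splitting).} Set $\lambda$ so that $L=(-\Delta+a)(-\Delta+b)$ formally, with $a+b=\gamma$ and $ab=\gamma_0$. I would instead compare $u$ with the solution $v$ of the second-order problem $(-\gamma\Delta+\gamma_0)v=f$, $v=0$ on $\partial\Omega$. The classical maximum principle and Hopf lemma give $v>0$ in $\Omega$ and $\partial_\nu v<0$ on $\partial\Omega$. The difference $w=u-v$ then solves $Lw=-\Delta^2 v$ in $\Omega$, with $w=0$ and $\partial_\nu w=-\partial_\nu v$ on $\partial\Omega$.

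\emph{Step 2 (interior).} After rescaling $x\mapsto \sqrt{\gamma+\gamma_0}\,x$, the fourth-order term becomes a singular perturbation of order $(\gamma+\gamma_0)^{-1}$. Energy estimates for $L$ on $H^2_0$ then show that $w$ is small relative to $v$ on compact subsets. More precisely, I aim at a Harnack-type inequality of the form
\[
v(x)\geq c\,\frac{d(x)}{\sup_\Omega d}\int_\Omega f\,\delta,
\]
where $d(x)=\operatorname{dist}(x,\partial\Omega)$, uniform in $f$, which dominates $|w|$ once $\gamma+\gamma_0>\mu_0$.

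\emph{Step 3 (boundary layer), the main obstacle.} Near $\partial\Omega$ the clamped condition forces $u\sim d^2$, so $u$ cannot be close to $v\sim d$ there. The sign must be recovered from a boundary-layer profile of width $(\gamma+\gamma_0)^{-1/2}$. Here I would use the uniform interior sphere condition: at each boundary point, touch $\Omega$ from inside by a ball $B$ of fixed radius. On balls, positivity of the clamped Green function is available: Boggio for $\Delta^2$, and its perturbation by $-\gamma\Delta+\gamma_0$ via the one-dimensional radial ODE, whose layer solution $e^{-\sqrt{\cdot}\,t}$-type profile is explicitly positive. I would then use comparison on $B$ to bound $u$ from below by a positive multiple of $d^2$, with constants depending only on the sphere radius and $N$. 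Gluing this with Step 2 on the overlap region $\{d\approx(\gamma+\gamma_0)^{-1/2}\}$ gives $u>0$ everywhere.

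The delicate point is that comparison for fourth-order operators on $B$ is not automatic, so the lower bound there must come from the explicit positive ball Green function and not from a maximum principle. Making this step fully rigorous is where I expect the real work of \cite{CT,CPT} to lie. Once it is in place, connectedness of $\Omega$ together with $f\not\equiv0$ upgrades $u\geq0$ to strict positivity.
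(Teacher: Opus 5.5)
The paper gives no proof of this lemma. It imports it from \cite{CPT}, where positivity comes from a Harnack-type inequality, so your Green-function and boundary-layer route is different in any case.

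\textbf{Main gap: the hypothesis does not give you a singular perturbation.} The condition $\gamma+\gamma_0>\mu_0$ does not place you in the regime on which Steps~1--2 rely. Your rescaling goes the wrong way: with $y=\sqrt{\gamma+\gamma_0}\,x$ the operator becomes $(\gamma+\gamma_0)^2\Delta_y^2-\gamma(\gamma+\gamma_0)\Delta_y+\gamma_0$, so the fourth-order term is amplified, not suppressed. The actual small parameter is $\gamma^{-1}$ (layer width $\gamma^{-1/2}$), which requires $\gamma$ itself to be large. For $\gamma=0$, which the statement allows, your comparison problem reads $\gamma_0v=f$; it admits no boundary condition, and Hopf's lemma says nothing.

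This is not a technicality. With $y=\gamma_0^{1/4}x$ one gets $L=\gamma_0\bigl(\Delta_y^2-\kappa\Delta_y+1\bigr)$ on $\gamma_0^{1/4}\Omega$, where $\kappa=\gamma/\sqrt{\gamma_0}$.
\begin{itemize}
\item If $\kappa<2$ (e.g.\ $\gamma=0$), the symbol has nonreal roots. The whole-space fundamental solution is then an exponentially damped but sign-changing oscillation; for $\kappa=0$, $N=3$ it is $e^{-r/\sqrt2}\sin(r/\sqrt2)/(4\pi r)$.
\item As $\gamma_0\to\infty$ with $\kappa$ fixed, the clamped Green function on the expanding domain converges to this kernel at interior points, with exponentially small error.
\item A nonnegative bump $f$ therefore gives $u<0$ somewhere for every large $\gamma_0$. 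This is the classical loss of positivity of clamped $\Delta^2+\lambda$ for large $\lambda$.
\end{itemize}
Hence no $\mu_0(\Omega,N)$ as stated exists. An argument of your type must add and use a restriction such as $\gamma$ large with $\gamma^2\geq4\gamma_0$ (the real-factorization regime of Proposition~\ref{prop:factorization-positive}). This restricts the statement itself, not merely your method.

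\textbf{Steps 2 and 3 assume the key estimate.} Even in that regime, both steps assert what needs to be proved.
\begin{itemize}
\item In Step~2, $Lw=-\Delta^2v$ contains the term $\gamma^{-1}\Delta f$, which is neither signed nor controlled by any weighted integral of $f$. Also, $w$ carries Neumann data $-\partial_\nu v$ as large as the boundary slope of $v$. So energy estimates cannot give $|w|\leq v$ uniformly over all $f\geq0$ (let $f$ concentrate at a point). That uniform bound is equivalent to the pointwise inequality $0\leq G_L\leq 2G_2$, where $G_2$ is the Dirichlet Green function of $-\gamma\Delta+\gamma_0$, and it already contains the conclusion.
\item In Step~3, $u$ is not clamped on the boundary of an interior tangent ball $B$. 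Representing it through the clamped Green function of $B$ leaves boundary terms in $u$ and $\partial_\nu u$ on $\partial B$ of unknown sign, and nonnegativity of $u$ there is part of what you want to prove. Even granting $u\geq0$ and $\partial_\nu u\leq0$ on $\partial B$, Proposition~\ref{prop:counterexample-mp} shows that such a nonhomogeneous supersolution principle fails on a disk for arbitrarily large $\gamma_0$.
\item Positivity of the clamped ball Green function of $\Delta^2-\gamma\Delta+\gamma_0$ does not follow from a radial ODE, since $G_B(x,\cdot)$ is not radial for $x\neq0$. It is in fact false for $\kappa<2$ and large $\gamma_0$, by the argument above.
\end{itemize}

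Finally, there is no strong maximum principle at fourth order. Strict positivity must come from strict positivity of the Green function, not from connectedness of $\Omega$.
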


It is tempting to replace the homogeneous boundary conditions in Lemma
\ref{Pos_CT} by $u\geq0$ and $\partial_\nu u\leq0$.  The following example
shows that no result of this form can hold for arbitrary nonnegative data,
even when the zeroth-order coefficient is arbitrarily large.

\begin{prop}[Obstruction to a nonhomogeneous supersolution principle]
\label{prop:counterexample-mp}
Let $\Omega=B_2\subset\R^2$ and fix any $\gamma\geq0$. There exists
$\gamma_* = \gamma_*(\gamma)>0$ such that, for every
$\gamma_0\geq\gamma_*$, one can find
$u,f\in C^\infty(\overline\Omega)$ satisfying
\[
\Delta^2u-\gamma\Delta u+\gamma_0u=f\geq0\quad\text{in }\Omega,
\qquad u>0,\quad\partial_\nu u<0\quad\text{on }\partial\Omega,
\]
while $u\geq0$ in $\Omega$ and $u$ vanishes on the interior circle
$\{|x|=1\}$.  In addition, $f$ vanishes on that circle and is not identically
zero.
\end{prop}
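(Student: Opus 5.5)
The plan is to build $u$ explicitly as a radial function on $B_2$ of the form
\[
u(x)=s^2\,\psi(s),\qquad s:=|x|^2-1\in[-1,3],
\]
where $\psi$ is a smooth, strictly positive function on $[-1,3]$ to be chosen. Because $u$ is a smooth function of $|x|^2$, it is automatically in $C^\infty(\overline\Omega)$. It is nonnegative, and it vanishes exactly on $\{|x|=1\}$, to second order there. Moreover $u>0$ on $\partial\Omega$, since $u|_{r=2}=9\psi(3)>0$.

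\textbf{Step 1 (boundary sign).} With $r=|x|$ we have $\partial_\nu u(2)=2r\,(2s\psi+s^2\psi')\big|_{r=2}=4\big(6\psi(3)+9\psi'(3)\big)$. So $\partial_\nu u<0$ on $\partial\Omega$ as soon as $\psi'(3)<-\tfrac23\psi(3)$. This can be arranged, for instance, by letting $\psi$ coincide with $\e^{-s}$ near $s=3$.

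\textbf{Step 2 (vanishing of $Lu$ on the circle).} Set $Lu:=\Delta^2u-\gamma\Delta u$; it is again a smooth function of $s$. Since $u=(r-1)^2g(r)$ with $g=(r+1)^2\psi>0$, it suffices to force $Lu$ to vanish to second order at $r=1$, that is $Lu|_{s=0}=\partial_s(Lu)|_{s=0}=0$. Then
\[
Lu=(r-1)^2h(r)\quad\text{with } h \text{ smooth on }[0,2],
\]
and consequently
\[
f=Lu+\gamma_0u=(r-1)^2\big(\gamma_0 g+h\big).
\]
Because $\min_{[0,2]}g>0$ and $h$ is bounded, the choice $\gamma_*:=\max\{1,\max_{[0,2]}(-h/g)^+\}$ gives $f\geq0$ for every $\gamma_0\geq\gamma_*$. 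The same formula shows that $f$ vanishes on $\{|x|=1\}$. It also shows $f\not\equiv0$: for example $f=\gamma_0 u\cdot(1+o(1))$ is positive for large $\gamma_0$ wherever $u>0$, or one can simply observe that $f>0$ for $\gamma_0>\gamma_*$.

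\textbf{Step 3 (solving the jet conditions, the main obstacle).} The two conditions in Step 2 only involve finitely many Taylor coefficients of $\psi$ at $s=0$. Since $u=s^2\psi$ and $L$ has order four, $Lu(1)$ depends on $\psi(0),\psi'(0),\psi''(0)$, and $\partial_s Lu(1)$ additionally on $\psi'''(0)$. I would normalize $\psi(0)=1$ and expand $\Delta=4s\,\partial_s^2+4\partial_s+4\partial_s^2$; in $N=2$ one has $\Delta=4(s+1)\partial_s^2+4\partial_s$ for functions of $s$. The coefficient of $\psi''(0)$ in $Lu(1)$ comes from $\Delta^2$ acting on $s^4\psi''(0)/2$, which gives $16\cdot 12\,\psi''(0)$, and this is nonzero. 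Similarly the top coefficient of $\psi'''(0)$ in $\partial_s Lu(1)$ is nonzero. Hence the system is triangular: for any choice of $\psi'(0)$ (say $0$), it uniquely determines $\psi''(0)$ and $\psi'''(0)$ as functions of $\gamma$. The main thing to verify carefully is exactly this nondegeneracy computation.

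\textbf{Step 4 (gluing).} Finally, take $\psi$ equal to the cubic Taylor polynomial $1+\tfrac12\psi''(0)s^2+\tfrac16\psi'''(0)s^3$ on a small neighbourhood of $s=0$, where it is positive. Take $\psi=\e^{-s}$ near $s=3$, and interpolate with a smooth positive function using a partition of unity; positivity is preserved under convex combinations of positive functions. This produces $\psi$ with all required properties, and Steps 1–2 conclude the proof.
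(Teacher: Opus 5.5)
Your construction is correct, but it takes a different route from the paper.

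\textbf{The paper's route.} The paper uses the single closed-form function $u=(r^2-1)^6\mathrm e^{-3r^2}$, which does not depend on $\gamma$. Its zero on the circle has order six, so near the circle $\Delta^2u\sim 5760\,\mathrm e^{-3}t^2$ is strictly positive and dominates $\gamma\Delta u=O(t^4)$. No tuning is needed there. Away from a small annulus, the term $\gamma_0u$ absorbs everything once $\gamma_0$ is large.

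\textbf{Your route.} You use the minimal double zero $u=s^2\psi$ and tune the $3$-jet of $\psi$ at $s=0$ so that $Lu=s^2k(s)$. Your nondegeneracy claim checks out. Write $a_j=\psi^{(j)}(0)/j!$ and take $a_0=1$, $a_1=0$. The two jet conditions then read $64+384a_2-8\gamma=0$ and $2304a_2+1920a_3-16\gamma=0$. This gives $a_2=(\gamma-8)/48$ and $a_3=(12-\gamma)/60$, so the system is indeed triangular.

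\textbf{Comparison.} Your approach costs a jet computation and a gluing step, and it makes $u$ itself depend on $\gamma$. In exchange, the global factorization $f=s^2(\gamma_0\psi+k)$ gives an explicit threshold $\gamma_*$ and avoids splitting into an annulus and its complement. It also shows that a double zero already suffices. The paper's argument is shorter and relies only on the positive leading coefficient of $\Delta^2$ at a zero of even order.

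\textbf{A small endpoint issue.} At $\gamma_0=\gamma_*$ your argument only gives $\gamma_0g+h\geq0$, so $f\not\equiv0$ is not justified there. Replacing $\gamma_*$ by $\gamma_*+1$ gives $\gamma_0g+h\geq g>0$, hence $f>0$ off the circle, and the issue disappears.
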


\begin{proof}
Put $r=|x|$ and
\[
u(r)=(r^2-1)^6\mathrm e^{-3r^2}.
\]
This is a smooth nonnegative radial function and its zero set in $B_2$ is
exactly $\{r=1\}$.  If $U(s)=(s-1)^6\mathrm e^{-3s}$, then in dimension two
\[
\Delta^2 u(r)=16r^4U^{(4)}(r^2)+64r^2U^{(3)}(r^2)+32U''(r^2).
\]
Writing $t=s-1$, the relevant terms have the expansions
\[
\Delta^2u=5760\mathrm e^{-3}t^2+O(t^3),\qquad
\Delta u=120\mathrm e^{-3}t^4+O(t^5),\qquad
u=\mathrm e^{-3}t^6+O(t^7).
\]
Therefore, for every fixed $\gamma\geq0$,
$\Delta^2u-\gamma\Delta u>0$ in a punctured neighbourhood of $r=1$.
On the complement of a sufficiently small annulus about $r=1$, the quotient
$(\Delta^2u-\gamma\Delta u)/u$ is continuous and bounded below. Hence there
is $\gamma_*(\gamma)>0$ such that
$f:=\Delta^2u-\gamma\Delta u+\gamma_0u\geq0$ for every
$\gamma_0\geq\gamma_*(\gamma)$. Since $u$ has a zero of order six at
$r=1$, while $\Delta u$ and $\Delta^2u$ vanish there as well, also $f=0$ on
that circle. Finally,
$u(2)>0$ and
\[
u'(2)=4U'(4)=-2916\mathrm e^{-12}<0.
\]
Thus all asserted properties hold for every fixed $\gamma\geq0$.
\end{proof}

Proposition \ref{prop:counterexample-mp} explains why an
Harnack absorption does not work in the nonhomogeneous setting. The difficulty is structural, not just technical. We therefore cannot use a maximum principle with
nonhomogeneous boundary data.  Instead, strict positivity away from the radial
centre follows from the least-energy structure.

\begin{lem}[Exclusion of zero spheres]\label{lem:no-zero-spheres}
Assume $f\in C^1(\R)$, $f(0)=0$, and that the radial Nehari manifold
\[
\cN=\{v\in H^2_{\rad}(\R^N)\setminus\{0\}:J'(v)v=0\}
\]
is a natural constraint with positive level
$c_{\rad}=\inf_{\cN}J>0$.  Let $u\in C^4(\R^N)\cap H^2_{\rad}(\R^N)$ be a
nontrivial, nonnegative critical point satisfying $J(u)=c_{\rad}$.  Then
\[
u(x)>0\qquad\text{for every }x\ne0.
\]
\end{lem}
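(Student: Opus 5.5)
We argue by contradiction: suppose $u(x_0)=0$ for some $x_0$ with $r_0:=|x_0|>0$. Since $u$ is radial, $u$ vanishes on the whole sphere $\{|x|=r_0\}$. Since $u\geq0$ is $C^1$, this sphere consists of interior minima, so also $u'(r_0)=0$; that is, $u=\partial_r u=0$ there. The plan is to split
\[
u=u_1+u_2,\qquad u_1:=u\,\chi_{B_{r_0}},\qquad u_2:=u\,\chi_{\R^N\setminus B_{r_0}}.
\]
Because $u$ and $\nabla u$ both vanish on $\partial B_{r_0}$, the zero extensions across the sphere preserve $H^2$. The jump of $u_i$ and of its normal derivative across $\partial B_{r_0}$ is zero, so no singular part appears in $D^2u_i$. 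Hence $u_1,u_2\in H^2_{\rad}(\R^N)$, they have disjoint supports up to a null set, and both are nonnegative.

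The key step is to show that neither piece is trivial, and that both lie on $\cN$. For nontriviality, I would use unique continuation for the ODE in $r$. If, say, $u\equiv0$ on $B_{r_0}$, then on the exterior $u$ solves the radial fourth-order ODE
\[
\Delta^2u-\gamma\Delta u+\gamma_0u=f(u),
\]
with $f\in C^1$ and $f(0)=0$. At $r_0$ it has the Cauchy data $u=u'=0$. I also need $u''=u'''=0$ there, which holds because $u\equiv0$ on the inside and $u\in C^4$. The Lipschitz right-hand side then gives uniqueness for the regular ODE away from $r=0$, so $u\equiv0$ globally, which is a contradiction. The same argument handles the case of vanishing outside (with $r_0>0$, the ODE is regular near $r_0$). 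So both $u_1,u_2\not\equiv0$.

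Next, testing the equation with $u_i$, the locality of every term gives
\[
J'(u)u_i=\int_{\text{supp}\,u_i}\big(|\Delta u_i|^2+\gamma|\nabla u_i|^2+\gamma_0u_i^2-f(u_i)u_i\big)=J'(u_i)u_i.
\]
Here I use that $\Delta u=\Delta u_i$ a.e. on the support of $u_i$, and that $f(u)u$ splits. Since $J'(u)=0$, we get $J'(u_i)u_i=0$, so $u_i\in\cN$. In the same way $J$ is additive on disjointly supported functions:
\[
J(u)=J(u_1)+J(u_2)\geq 2c_{\rad}>c_{\rad},
\]
using $c_{\rad}>0$. This contradicts $J(u)=c_{\rad}$.

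I expect the main obstacle to be justifying $u_1,u_2\in H^2$, specifically that no measure-valued second derivative is created at the sphere. This requires the vanishing of the first derivative, which comes precisely from nonnegativity plus differentiability. It is also the reason the argument fails at $x=0$: there the "sphere" degenerates to a point, and the piece $u_1$ does not exist. A secondary subtlety is the nontriviality step: it uses ODE uniqueness at a regular point $r_0>0$, where the singular coefficients $1/r$ in the radial Laplacian are bounded.
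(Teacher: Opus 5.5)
Your proof is correct and uses the same ingredients as the paper's. Vanishing of $w$ and $w'$ at a zero sphere gives an $H^2_{\rad}$ splitting; testing $J'(u)=0$ with each piece puts it on $\cN$, so $J(u)\geq 2c_{\rad}$; and ODE uniqueness with zero Cauchy data at $r_0>0$ rules out a trivial piece. The only difference is organizational (you cut at a single sphere and use uniqueness to show both inner and outer parts are nontrivial, while the paper first shows $\{w>0\}$ is connected and then uses uniqueness to show it equals $(0,\infty)$), and your two-piece split avoids the paper's sum over possibly infinitely many components.
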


\begin{proof}
Write $u(x)=w(|x|)$ and consider the open set
$P=\{r>0:w(r)>0\}$.  If $P$ has two distinct connected components $I_1$ and
$I_2$, set $u_i(x)=u(x)\mathbf1_{\{|x|\in I_i\}}$.  At every finite endpoint
of $I_i$, nonnegativity and $C^1$ regularity give $w=w'=0$; hence the zero
extension $u_i$ belongs to $H^2_{\rad}(\R^N)$.  Testing $J'(u)=0$ with $u_i$
gives $J'(u_i)u_i=0$, so $u_i\in\cN$.  Since the supports are disjoint up to
sets of measure zero,
\[
J(u)=\sum_I J(u_I)\geq J(u_1)+J(u_2)\geq2c_{\rad},
\]
contrary to $J(u)=c_{\rad}$. Thus $P$ has exactly one component.

If this component were different from $(0,\infty)$, then $w$ would vanish on
a nonempty open interval contained in $(0,\infty)$.  The radial equation
associated with $\Delta^2u-\gamma\Delta u+\gamma_0u=f(u)$ is a fourth-order
ordinary differential equation with locally Lipschitz right-hand side on every
interval separated from $r=0$. At an endpoint adjacent to the zero interval,
$w,w',w'',w'''$ all vanish. Uniqueness for the corresponding first-order
system would force $w\equiv0$, a contradiction.  Therefore
$P=(0,\infty)$.
\end{proof}

\begin{remark}\label{rmk:origin}
Lemma \ref{lem:no-zero-spheres} removes every possible zero sphere of positive
radius.  It does not, without an additional monotonicity or comparison input,
exclude the isolated alternative $u(0)=0<u(r)$ for $r>0$. Accordingly, the
results below assert positivity on $\R^N\setminus\{0\}$; full pointwise
positivity follows whenever $u(0)>0$ is known independently. This distinction
is immaterial for positivity almost everywhere, but is essential for a correct
pointwise statement.
\end{remark}

The isolated alternative in Remark \ref{rmk:origin} can be excluded when the
linear operator has two real positive second-order factors.  This familiar
factorization is also the basis of the cooperative-system approach in
\cite{BN}; we record the short resolvent argument because it applies directly
to the ground states constructed here.

\begin{prop}[Full positivity in the real-factorization regime]
\label{prop:factorization-positive}
Let $\gamma_0>0$, $\gamma\geq0$, and $\gamma^2\geq4\gamma_0$.  Suppose that
$u\in H^2(\R^N)$ is a nontrivial distributional solution of
\[
\Delta^2u-\gamma\Delta u+\gamma_0u=h\quad\ \text{in }\ \,\R^N,
\]
where $h\geq0$, $h\not\equiv0$, and the equation is well defined in
$H^{-2}(\R^N)$. Then $u$ has a strictly positive resolvent representative.
In particular, if $u$ is continuous, then
\[
u(x)>0\qquad\text{for every }x\in\R^N.
\]
In particular, this conclusion applies to every nontrivial nonnegative
solution of \eqref{eq} under the hypotheses of the main theorems.
\end{prop}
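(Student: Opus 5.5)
We factor the operator and invert it as a composition of two positive Bessel resolvents. Since $\gamma^2\geq4\gamma_0>0$, the polynomial $t^2-\gamma t+\gamma_0$ has two real roots
\[
a=\tfrac{\gamma-\sqrt{\gamma^2-4\gamma_0}}{2},\qquad b=\tfrac{\gamma+\sqrt{\gamma^2-4\gamma_0}}{2}.
\]
Both roots are strictly positive, because $a+b=\gamma\geq0$ and $ab=\gamma_0>0$. Hence
\[
\Delta^2-\gamma\Delta+\gamma_0=(-\Delta+a)(-\Delta+b).
\]
On the Fourier side this identity reads $|\xi|^4+\gamma|\xi|^2+\gamma_0=(|\xi|^2+a)(|\xi|^2+b)$.

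First we show that $u$ is given by the resolvent formula. Since $u\in H^2$ and $h\in H^{-2}$, taking Fourier transforms gives
\[
\hat u=\frac{\hat h}{(|\xi|^2+a)(|\xi|^2+b)}.
\]
This means $u=G_b*(G_a*h)$, where $G_c>0$ is the Bessel kernel of $(-\Delta+c)^{-1}$. Concretely, $G_c(x)=\int_0^\infty(4\pi t)^{-N/2}\e^{-|x|^2/4t-ct}\dd t$, which is strictly positive everywhere and integrable. Uniqueness holds because the symbol $(|\xi|^2+a)(|\xi|^2+b)$ has no zeros, so the equation has exactly one tempered solution.

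Next we need to make sense of $G_a*h$ when $h\geq0$ is only a distribution in $H^{-2}$. A nonnegative distribution is a positive Radon measure $\mu$. We set $v=(-\Delta+a)^{-1}h$, which lies in $L^2$; this follows from Fourier multipliers, since $\hat v=\hat h/(|\xi|^2+a)$. We claim $v=G_a*\mu$ a.e. To see this, test against nonnegative $\varphi\in C_0^\infty$:
\[
\langle v,\varphi\rangle=\langle h,G_a*\varphi\rangle\geq0.
\]
This holds because $G_a*\varphi$ is a nonnegative Schwartz-like test function. So $v\geq0$, and in fact $v(x)=\int G_a(x-y)\dd\mu(y)$.

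Now we get strict positivity. Since $\mu\neq0$, there is a ball $B$ with $\mu(B)>0$. Because $G_a$ is bounded below by a positive constant on compact sets, $v$ is strictly positive on a neighbourhood of every point; precisely, $v>0$ everywhere as a lower semicontinuous function. We then write
\[
u(x)=\int G_b(x-y)v(y)\dd y.
\]
Since $G_b>0$ everywhere and $v\geq0$ with $v\not\equiv0$, we get $u(x)>0$ for every $x$. This is the strictly positive resolvent representative. If $u$ is continuous, it agrees everywhere with this representative, since both are continuous, or at least the representative is lower semicontinuous and equals $u$ a.e. Hence $u>0$ pointwise.

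Finally we apply this to \eqref{eq}. A nontrivial nonnegative solution $u$ has $h=f(u)\geq0$ by ($f_1$). We also have $h\not\equiv0$: otherwise $u=0$ by uniqueness of the resolvent, a contradiction. The required integrability $f(u)\in H^{-2}$ is what "well defined" means for weak solutions under the growth assumptions, via \eqref{f_above_N>=5} or \eqref{f_above_N=4} and Adams' inequality.

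The main obstacle is the low-regularity bookkeeping: identifying the distribution $h$ with a measure and justifying the convolution formula. We handle this through the Fourier/duality argument above. In the application, $f(u)$ is a continuous $L^1_{\mathrm{loc}}$ function, so this difficulty largely disappears there.
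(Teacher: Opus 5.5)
Your proposal is correct and follows the paper's route: factor $\Delta^2-\gamma\Delta+\gamma_0=(-\Delta+\lambda_+)(-\Delta+\lambda_-)$ with $\lambda_\pm>0$, identify $u$ through the Fourier multiplier with $G_{\lambda_-}*G_{\lambda_+}*h$, and use strict positivity of the Bessel kernels. Your extra care adds something the paper only asserts. Treating $h\geq0$ as a positive Radon measure, and noting that the convolution representative is lower semicontinuous, shows that a continuous $u$ equal to it a.e.\ dominates it at every point, hence is strictly positive everywhere, without needing the two functions to coincide everywhere.
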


\begin{proof}
Set
\[
\lambda_\pm=\frac{\gamma\pm\sqrt{\gamma^2-4\gamma_0}}2.
\]
Since $\lambda_++\lambda_-=\gamma$ and
$\lambda_+\lambda_-=\gamma_0>0$, both numbers are positive and
\[
\Delta^2-\gamma\Delta+\gamma_0
=(-\Delta+\lambda_+)(-\Delta+\lambda_-).
\]
Let $G_\lambda$ denote the Bessel kernel, namely the fundamental solution of
$-\Delta+\lambda$ in $\R^N$.  It is strictly positive.  The Fourier-multiplier
representation of the unique $H^2$ solution gives
\[
u=G_{\lambda_-}*G_{\lambda_+}*h.
\]
Equivalently, $v:=G_{\lambda_+}*h$ solves
$(-\Delta+\lambda_+)v=h$ and $u=G_{\lambda_-}*v$.  Positivity of the Bessel
kernels and $0\not\equiv h\geq0$ imply first $v>0$ and then $u>0$ everywhere.
The equality case $\gamma^2=4\gamma_0$ is included: then
$\lambda_+=\lambda_-=\gamma/2>0$ and the same convolution argument uses the
square of a single positive resolvent.

For the nonlinear problem, take $h=f(u)$. The constructed ground state is
nonnegative and positive for $x\ne0$ by Lemma \ref{lem:no-zero-spheres}; by
($f_1$), $f(u)\geq0$ and $f(u)\not\equiv0$. When $N\leq3$, its continuity is
immediate from the Morrey--Sobolev embedding of $H^2(\R^N)$. When $N\geq4$,
the $H^2$ embedding alone is not sufficient, but local elliptic regularity
and bootstrap applied to \eqref{eq}, under the stated subcritical or critical
growth assumptions, give a continuous representative (in fact, the higher
regularity used elsewhere in the paper). The positive resolvent
representation is therefore pointwise and yields, in particular, $u(0)>0$.
\end{proof}

\section{The subcritical case: Proof of Theorem \ref{Thm2}}\label{Section_Subcritical}\label{Proof-S}

Let $R>0$ and consider the biharmonic Dirichlet problem
\begin{equation}\label{eq_R}
	\begin{cases}
		\Delta^2u-\gamma\Delta u+\gamma_0 u=f(u)&\;\mbox{in}\;B_R,\\
		u=\partial_\nu u=0&\;\mbox{on}\;\dB_R,
	\end{cases}
\end{equation}
with $\gamma\geq0$ and $\gamma_0>0$. The advantage of taking Dirichlet boundary conditions on $\dB_R$ is that one can extend by $0$ any function in $H^2_0(B_R)$ to a function in $H^2(\R^N)$. In the sequel, as well as in the following sections, the $0$-extension of a function $u$ is denoted by $\tu$. Since the problem has a radial symmetry, by means of the principle of symmetric criticality, see \cite[Theorem 1.28]{Willem}, one may consider the problem in $H^2_{0,\rad}(B_R)$, which is the subspace of radially symmetric functions in $H^2_0(B_R)$.

Since $\iii{u}_R$ is a norm in $H^2_0(B_R)$, the standard mountain-pass argument under assumptions ($f_1$)-($f_4$) gives a solution $u_R\in H^2_{0,\rad}(B_R)$, which is also a radial ground state if ($f_5$) holds; see, for instance, \cite[Section 5]{dOM}. More precisely, associated to the variational equation \eqref{eq_R} is the functional $J:H^2_0(B_R)\to\R$ defined by
$$J_R(u)=\frac12\iii{u}_R^2-\int_{B_R}F(u),$$
in the sense that critical points of $J_R$ correspond to weak solutions of \eqref{eq_R}.
It is easy to verify from our assumptions on $f$ that $J\in C^1$. Moreover, by ($f_4$) one has $J(tu)\to-\infty$ for all $u\in H^2_0(B_R)$ fixed as $t\to+\infty$, and assumption ($f_2$) yields $J_R(u)>\delta_R>0$ on $S_\rho^R:=\{u\in H^2_{0,\rad}(B_R)\,|\,\iii{u}_R=\rho_R\}$ for some $\delta_R,\rho_R>0$. Hence the mountain-pass level
\begin{equation}\label{MPlevel}
	c_R:=\inf_{\gamma\in\Gamma_R}\max_{u\in\gamma([0,1])}J_R(u),
\end{equation}
is well-defined, where
$$\Gamma_R:=\left\{\gamma\in C\left([0,1],H^2_{0,\rad}(B_R)\right)\,|\,\gamma(0)=0,\gamma(1)=v_0^R\right\}$$
and $v_0^R\in H^2_{0,\rad}(B_R)$ such that $J_R(v_0^R)<0$. Before going on, in Lemma \ref{MPunif}, we retrace this argument, showing that the point $v_0^R$ and the constants $\delta_R$ and $\rho_R$ can be chosen independently of $R$.

\begin{lem}\label{MPunif}
	There exist $\delta,\rho>0$ and $v_0\in H^2_{0,\rad}(B_1)$ such that $J_R(u)>\delta>0$ for all $u\in S_\rho:=\{u\in H^2_0(B_R)\,|\,\iii{u}_R=\rho\}$, and $J_R(v_0)<0$ for all $R\geq 1$.
\end{lem}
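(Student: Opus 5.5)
The plan is to compare every $J_R$ with the whole-space functional $J(u)=\frac12\iii{u}^2-\int F(u)$ through the zero extension. For $u\in H^2_0(B_R)$ and $R\geq1$, the zero extension $\tu$ lies in $H^2(\R^N)$ and satisfies $\iii{\tu}=\iii{u}_R$ and $\int F(\tu)=\int_{B_R}F(u)$. Hence $J_R(u)=J(\tu)$. In particular, any bound for $J$ on a sphere of $H^2(\R^N)$ holds simultaneously for all $J_R$. Likewise, any function supported in $B_1$ with negative energy can be used as $v_0$ for every $R\geq1$, since $H^2_{0,\rad}(B_1)\subset H^2_{0,\rad}(B_R)$ via zero extension.

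\emph{Step 1: the lower bound on the sphere.} First take $N\geq5$. By \eqref{f_above_N>=5}, for every $\varepsilon>0$,
\[
\int F(\tu)\leq\varepsilon\|\tu\|_2^2+C_\varepsilon\|\tu\|_{p+1}^{p+1}.
\]
Since $p+1\in(2,2_*)$, the Sobolev embedding on $\R^N$ together with the equivalence constant $C_{eq}$ gives
\[
\int F(\tu)\leq\varepsilon\,C_{eq}\,C_S^2\,\iii{u}_R^2+C_\varepsilon C\,\iii{u}_R^{p+1},
\]
with constants independent of $R$. Choosing $\varepsilon$ small, we get
\[
J_R(u)\geq\tfrac14\rho^2-C\rho^{p+1}\qquad\text{whenever }\iii{u}_R=\rho,
\]
and this is positive for $\rho$ small; the resulting $\delta$ and $\rho$ do not depend on $R$.

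When $N=4$, use \eqref{f_above_N=4} with, say, $q=3$. By Hölder's inequality, Lemma \ref{estimate_Sani} and \eqref{Adams_RS},
\[
\int|\tu|^q(\e^{\alpha\tu^2}-1)\leq\|\tu\|_{2q}^q\Bigl(\int(\e^{2\alpha\tu^2}-1)\Bigr)^{1/2}.
\]
The exponential integral is bounded uniformly as long as $2\alpha\|\tu\|_{H^2}^2\leq 2\alpha C_{eq}\rho^2\leq32\pi^2$. Indeed, writing $\tu=\|\tu\|_{H^2}w$ with $\|w\|_{H^2}\leq1$, one has $\int(\e^{2\alpha\tu^2}-1)\leq\int(\e^{32\pi^2w^2}-1)\leq C$. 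This gives $J_R(u)\geq\frac14\rho^2-C\rho^q$ for small $\rho$, again uniformly in $R$.

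\emph{Step 2: the negative point.} Fix any radial $\varphi\in C^\infty_0(B_1)$ with $\varphi\geq0$ and $\varphi\not\equiv0$. By ($f_4$), $F(t)\geq c_1t^\mu-c_2$ for $t>0$, and $F(t)\geq0$ for $t\geq 0$. Since $\varphi$ is supported in $B_1$, the constant $c_2$ contributes at most $c_2|B_1|$. Therefore
\[
J_R(t\varphi)=J(t\varphi)\leq\tfrac{t^2}2\iii{\varphi}^2-c_1t^\mu\|\varphi\|_\mu^\mu+c_2|B_1|,
\]
which tends to $-\infty$ because $\mu>2$. Choose $t_0$ so large that $J(t_0\varphi)<0$ and also $\iii{t_0\varphi}>\rho$, and set $v_0=t_0\varphi$. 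Since $J_R(v_0)=J(v_0)$ for every $R\geq1$, the same $v_0$ works for all such $R$.

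The only delicate point is the $N=4$ case of Step 1, namely keeping the Adams exponent below $32\pi^2$ uniformly in $R$. This is handled because \eqref{Adams_RS} is a whole-space inequality applied to $\tu$, with $\rho$ chosen depending only on $\alpha$ and $C_{eq}$.
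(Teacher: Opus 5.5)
Your proof is correct and follows essentially the same route as the paper. Both arguments identify $J_R(u)$ with the whole-space energy of the zero extension $\tu$, so that the $R$-independent Sobolev embedding (for $N\geq5$) and Adams' inequality together with Lemma \ref{estimate_Sani} (for $N=4$) give uniform $\rho,\delta$; both also obtain the negative point by scaling a fixed radial function supported in $B_1$ and using ($f_4$). Your version is slightly more careful than the paper's in three places: you keep the additive constant $c_2$ from ($f_4$), you take $\varphi\geq0$ (needed because $F\equiv0$ on $(-\infty,0]$), and you make explicit the smallness $2\alpha C_{eq}\rho^2\leq32\pi^2$ that the paper's choice $\alpha t=32\pi^2$ tacitly requires.
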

\begin{proof}
	Take $e_0\in C^\infty_{0,\rad}(B_1)$, then, using ($f_4$) for $t>0$ one has
	\begin{equation*}
		\begin{split}
			J_R(te_0)&\leq\frac{t^2}2\iii{e_0}_R^2-t^\mu\int_{B_R}e_0^\mu=\frac{t^2}2\iii{e_0}_1^2-t^\mu\int_{B_1}e_0^\mu\to-\infty
		\end{split}
	\end{equation*}
	as $t\to+\infty$ uniformly for $R\geq1$. If we define then $v_0:=te_0\in C^\infty_{0,\rad}(B_1)\subset H^2_{0,\rad}(B_R)$ for all $R\geq1$ and take $t$ large enough, we get the desired element.
	\vskip0.2truecm
	Let $N\geq5$. By \eqref{f_above_N>=5} one gets
	{\begin{equation*}
		\begin{split}
			J_R(u)&\geq\frac{\iii{u}_R^2}2-\varepsilon\int_{B_R}|u|^2-C_\varepsilon\int_{B_R}|u|^{p+1}\geq\left(\frac12-\varepsilon\right)\frac{\iii{\tu}^2}2-C_\varepsilon C_p\|\tu\|_{H^2(\R^N)}^{p+1}\\
			&\geq\left(\frac12-\varepsilon\right)\frac{\iii{\tu}^2}2-C_\varepsilon C_pC_{eq}^\frac{p+1}2\iii{\tu}^{p+1},
		\end{split}
	\end{equation*}}
	where $C_p$ is the best constant in the critical embedding $H^2(\R^N)\hookrightarrow L^{p+1}(\R^N)$. Hence, choosing $\rho>0$ small enough, on $S_\rho$ one gets $J_R(u)>\delta>0$ with $\delta$ independent of $R$.
	
	Let now $N=4$, then by \eqref{f_above_N=4} for all $u\in H^2_{0,\rad}(B_R)$ and $\varepsilon>0$, $q>2$, $\alpha>0$, and $t>1$, one infers
	\begin{equation*}
		\begin{split}
			\int_{B_R}F(u)&\leq\varepsilon\|u\|_2^2+C(\varepsilon,q,\alpha)\int_{B_R}|u|^q\left(\e^{\alpha u^2}-1\right)\\
			&\leq\varepsilon\|u\|_2^2+C(\varepsilon,q,\alpha)\|\tu\|_{qt'}^q\left(\int\left(\e^{\alpha t\tu^2}-1\right)\right)
		\end{split}
	\end{equation*}
	by Lemma \ref{estimate_Sani}. Choosing $\alpha$ and $t$ such that $\alpha t=32\pi^2$, then by Adams' inequality \eqref{Adams_RS} we get
	\begin{equation*}
		\int_{B_R}F(u)\leq\varepsilon\|u\|_2^2+C(\varepsilon,q)\|\tu\|_{qt'}^q\leq C\varepsilon\iii{\tu}^2+C(\varepsilon,q)\iii{\tu}^q,
	\end{equation*}
	which yields
	\begin{equation*}
		J_R(u)=\frac12\iii{u}_R^2-\int_{B_R}F(u)\geq\left(\frac12-C\varepsilon\right)\iii{\tu}^2-C(\varepsilon,q)\iii{\tu}^q.
	\end{equation*}
	For a fixed $q>2$ it is sufficient to choose $\varepsilon=\frac1{4C}$ and $\rho$ small enough, so that again for $u\in S_{\rho}$, $J_R(u)>\delta>0$ with $\delta$ independent of $R$.
\end{proof}

\begin{cor}\label{MPlevel_bdd}
	The mountain-pass level map $R\mapsto c_R$ is decreasing and $0<\delta<c_R\leq c_1$ for all $R\geq1$.
\end{cor}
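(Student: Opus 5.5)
The plan is to express $c_R$ through its mountain-pass definition \eqref{MPlevel}, using the uniform data from Lemma \ref{MPunif}, and to exploit the inclusion $H^2_{0,\rad}(B_R)\subset H^2_{0,\rad}(B_{R'})$ for $R\le R'$ given by zero extension. Lemma \ref{MPunif} provides a single endpoint $v_0\in H^2_{0,\rad}(B_1)$ with $J_R(v_0)<0$ for all $R\geq1$. We may therefore fix $v_0^R:=v_0$ for every $R\geq1$, so that the path classes $\Gamma_R$ all share the same endpoints $0$ and $v_0$. The mountain-pass level does not depend on the choice of admissible endpoint on the negative side (the ground state characterisation through the Nehari manifold under ($f_5$) makes this clear), but fixing $v_0$ once and for all avoids even having to discuss that point.

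\emph{Monotonicity.} Let $1\le R\le R'$. Zero extension $u\mapsto\tu$ is a linear isometry from $(H^2_{0,\rad}(B_R),\iii{\cdot}_R)$ into $(H^2_{0,\rad}(B_{R'}),\iii{\cdot}_{R'})$, since all integrands vanish outside $B_R$. Because $F(0)=0$ by ($f_1$), we have $\int_{B_{R'}}F(\tu)=\int_{B_R}F(u)$, and hence $J_{R'}(\tu)=J_R(u)$. Consequently every path $g\in\Gamma_R$ yields, after zero extension, a path $\tilde g\in\Gamma_{R'}$ with the same endpoints $0$ and $v_0$ (note $\tilde v_0=v_0$ since $v_0$ is supported in $B_1$), and the two paths have equal maxima of the functional. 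Taking the infimum over $g\in\Gamma_R$ gives $c_{R'}\le c_R$, so $R\mapsto c_R$ is nonincreasing. In particular $c_R\le c_1$ for all $R\geq1$.

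\emph{Lower bound.} Any $g\in\Gamma_R$ is continuous, satisfies $\iii{g(0)}_R=0<\rho$, and ends at $v_0$. Since $J_R(v_0)<0<\delta$ while $J_R>\delta$ on $S_\rho$, we must have $\iii{v_0}_R>\rho$. Indeed, the estimates in Lemma \ref{MPunif} give $J_R\ge0$ on the whole ball $\{\iii{u}_R\le\rho\}$ once $\rho$ is small, because the lower bound $(\frac12-C\varepsilon)s^2-Cs^{q}$ is positive for $0<s\le\rho$. By the intermediate value theorem applied to $t\mapsto\iii{g(t)}_R$, the path crosses $S_\rho$, so $\max_{g([0,1])}J_R>\delta$ and therefore $c_R\geq\delta$.

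Strict inequality $c_R>\delta$ does not follow from this infimum argument alone. I would obtain it by taking the bound on $S_\rho$ with a slightly larger constant: replace $\delta$ by $\delta/2$ in the statement, or equivalently note that Lemma \ref{MPunif} really yields $J_R\geq2\delta'$ on $S_\rho$ for some $\delta'>0$. I expect this bookkeeping, together with making the choice of a common endpoint $v_0$ legitimate, to be the only delicate point. The rest is the trivial isometric-inclusion argument.
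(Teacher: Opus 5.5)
Your proposal is correct and follows the paper's argument. Zero extension gives $\Gamma_R\subset\Gamma_{R'}$ with the common endpoint $v_0$ from Lemma \ref{MPunif}, which yields monotonicity and $c_R\le c_1$, and the lower bound comes from Lemma \ref{MPunif}. Your added details are correct refinements of the paper's one-line justification: the crossing of $S_\rho$ using $J_R\ge0$ on the small ball, and the observation that strict $c_R>\delta$ needs a uniform margin on $S_\rho$, which the explicit estimate in Lemma \ref{MPunif} supplies.
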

\begin{proof}
	Note that if $R_1<R_2$ then $H^2_{0,\rad}(B_{R_1})\subset H^2_{0,\rad}(B_{R_2})$ by the $0$-extension. Hence $\Gamma_{R_1}\subset\Gamma_{R_2}$ and therefore $c_{R_1}\geq c_{R_2}$. The fact that $c_R>\delta>0$ follows directly from Lemma \ref{MPunif}.
\end{proof}

We are now going to obtain a radial solution of our biharmonic Schr\"odinger problem \eqref{eq} on $\R^N$ as the limit of the sequence of the radially symmetric mountain-pass solutions $(\tu_R)_R$ obtained before. In this way, if one proves positivity for the $u_R$ for $R$ large, the limit point will not change sign.

\begin{prop}\label{exists_uinfty}
	The (extended) family of mountain-pass solutions $(\tu_R)_R$ is uniformly bounded in $H^2_{\rad}(\R^N)$. Moreover, there exists $u_\infty\in H^2_{\rad}(\R^N)$ such that, up to a subsequence, $\tu_R\to u_\infty$ weakly in $H^2(\R^N)$ and strongly in $L^q(\R^N)$ for all $q\in(2,2_*)$, and $\uinfty$ is a weak solution of \eqref{eq}.
\end{prop}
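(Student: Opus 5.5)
Three steps: a uniform $H^2$ bound, extraction of a limit with compactness from the radial Strauss-type embedding, and passage to the limit in the weak formulation using compactly supported test functions.

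\emph{Uniform bound.} Since $u_R$ is a critical point of $J_R$ at level $c_R$, we have $J_R'(u_R)u_R=0$. Using ($f_4$),
\[
c_R=J_R(u_R)-\frac1\mu J_R'(u_R)u_R=\Bigl(\frac12-\frac1\mu\Bigr)\iii{u_R}_R^2+\int_{B_R}\Bigl(\frac1\mu f(u_R)u_R-F(u_R)\Bigr)\geq\Bigl(\frac12-\frac1\mu\Bigr)\iii{\tu_R}^2 .
\]
By Corollary \ref{MPlevel_bdd}, $c_R\leq c_1$, so $\iii{\tu_R}^2\leq \frac{2\mu}{\mu-2}c_1$ for all $R\geq1$. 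The norm equivalence with constant $C_{eq}$ then gives a uniform bound in $H^2(\R^N)$. The extensions $\tu_R$ are radial and lie in $H^2_{\rad}(\R^N)$.

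\emph{Extraction of a limit.} By reflexivity there is a subsequence $R_n\to\infty$ and $\uinfty\in H^2_{\rad}(\R^N)$ with $\tu_{R_n}\rightharpoonup\uinfty$ weakly in $H^2$; the radial subspace is weakly closed. For strong convergence I use the compactness of $H^2_{\rad}(\R^N)\hookrightarrow L^q(\R^N)$ for $q\in(2,2_*)$, with $2_*=\infty$ when $N=4$. This follows from Strauss' lemma applied to $H^1_{\rad}$ (Lions' compactness for radial functions), since $H^2_{\rad}\subset H^1_{\rad}$. The uniform decay $|u(x)|\les |x|^{-(N-1)/2}\|u\|_{H^1}$ controls the tails at infinity, and the local compact Rellich embedding handles bounded regions. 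Interpolation with the uniform $L^2$ and $L^{2_*}$ bounds then covers the full range. Passing to a further subsequence, we also get a.e. convergence.

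\emph{Passage to the limit.} Fix $\varphi\in C^\infty_0(\R^N)$. By symmetric criticality it suffices to take $\varphi$ radial, though a general test function works as well. Then $\varphi\in H^2_0(B_{R_n})$ for $n$ large, so
\[
\int\bigl(\Delta\tu_{R_n}\Delta\varphi+\gamma\nabla\tu_{R_n}\nabla\varphi+\gamma_0\tu_{R_n}\varphi\bigr)=\int f(\tu_{R_n})\varphi .
\]
The left-hand side converges by weak convergence. For the right-hand side, $f(\tu_{R_n})\to f(\uinfty)$ a.e. on $\supp\varphi$, and I need uniform integrability there.
\begin{itemize}
\item For $N\geq5$, bound \eqref{f_above_N>=5} gives $|f(\tu_{R_n})|\leq\varepsilon|\tu_{R_n}|+C_\varepsilon|\tu_{R_n}|^p$ with $p+1<2_*$. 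Strong $L^{p+1}$ convergence on the compact support then gives convergence, via Vitali or generalized dominated convergence.
\item For $N=4$, I expect the main difficulty. I would use \eqref{f_above_N=4} with $\alpha$ small, together with Lemma \ref{estimate_Sani} and Hölder. The uniform bound $\iii{\tu_R}\leq K$ and Adams' inequality \eqref{Adams_RS-tau}, with $\alpha r K^2 C_{eq}\leq32\pi^2$, give $\sup_n\int(\e^{\alpha r\tu_{R_n}^2}-1)<\infty$. Hence $f(\tu_{R_n})$ is bounded in $L^s$ for some $s>1$, which yields uniform integrability and convergence of $\int f(\tu_{R_n})\varphi$. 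Subcriticality ($f_3$) is exactly what allows $\alpha$ to be taken arbitrarily small.
\end{itemize}
Density of $C^\infty_0$ in $H^2$ then shows that $\uinfty$ is a weak solution of \eqref{eq}.
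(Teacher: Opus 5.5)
Your proposal is correct and follows the paper's proof: the same Ambrosetti--Rabinowitz estimate $c_R\geq(\frac12-\frac1\mu)\iii{\tu_R}^2$ combined with $c_R\leq c_1$, weak compactness plus the compact radial embedding, and passage to the limit against $\varphi\in C^\infty_0(\R^N)$. In dimension four you use, as the paper does, the bound \eqref{f_above_N=4} with small $\alpha$, Lemma \ref{estimate_Sani}, H\"older and Adams' inequality. The only variation is that you conclude via Vitali's theorem (domination by $\varepsilon|\tu_{R_n}|+C_\varepsilon|\tu_{R_n}|^p$ for $N\geq5$, an $L^s$ bound with $s>1$ for $N=4$), whereas the paper invokes the de Figueiredo--Miyagaki--Ruf lemma through a uniform bound on $\int|f(\tu_k)\tu_k\varphi|$; both arguments rest on the same uniform integrability.
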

\begin{proof}
	Since we know that, for any $R\geq1$, $u_R$ is a radial weak solution of \eqref{eq_R}, then $J'_R(u_R)u_R=0$, that is $\iii{u_R}_R^2=\int_{B_R}f(u_R)u_R$. Hence, by ($f_4$)
	\begin{equation*}
		c_R=J_R(u_R)=\frac12\iii{u_R}_R^2-\int_{B_R}F(u_R)\geq\frac12\iii{u_R}_R^2-\frac1\mu\int_{B_R}f(u_R)u_R=\left(\frac12-\frac1\mu\right)\iii{u_R}_R^2.
	\end{equation*}
	By Corollary \ref{MPlevel_bdd}, we then infer
	\begin{equation}\label{iii_unifbdd}
		\iii{\tu_R}^2=\iii{u_R}_R^2\leq\frac{2\mu}{\mu-2}c_R\leq\frac{2\mu}{\mu-2}c_1.
	\end{equation}
	Therefore there exists $\uinfty\in H^2_{\rad}(\R^N)$ and a subsequence $(u_k)_k:=(u_{R_k})_{R_k}$ with $R_k\nearrow+\infty$ as $k\to+\infty$, such that $\tu_k\rightharpoonup\uinfty$ in $H^2_{\rad}(\R^N)$ and by compact embedding, see \cite{Lions}, $\tu_k\to\uinfty$ in $L^q(\R^N)$ for all $q\in(2,2_*)$ if $N\geq5$ or for all $q\in(2,+\infty)$ if $N=4$. Let now $\varphi\in C^\infty_0(\R^N)$, for $k$ large enough one has $B_{R_K}(0)\supset\supp\,\varphi$ and hence
	\begin{equation}\label{conv_1-S}
		\int_{B_{R_k}}\left(\Delta u_k\Delta\varphi+\gamma\nabla u_k\nabla\varphi+\gamma_0 u_k\varphi\right)\to\int\left(\Delta\uinfty\Delta\varphi+\gamma\nabla\uinfty\nabla\varphi+\gamma_0\uinfty\varphi\right)
	\end{equation}
	by weak convergence. It remains to prove that $\int_{B_{R_k}}f(u_k)\varphi\to\int f(\uinfty)\varphi$, and we distinguish again two cases according to the dimension. If $N\geq5$, it is easy to see that
	$$\int_{\supp\,\varphi}f(u_k)|\varphi|\les\int_{\supp\,\varphi}\left(1+|\tu_k|^p\right)\leq C,$$
	being $(\tu_k)_k$ bounded in $L^p(\R^N)$, since $p<2_*$. By continuity of $f\geq0$, by Fatou's lemma, we consequently have
	$$\int_{\supp\,\varphi}f(\uinfty)|\varphi|\leq\liminf_{k\to+\infty}\int_{\supp\,\varphi}f(\tu_k)|\varphi|\leq C.$$
	In order to apply \cite[Lemma 2.1]{dFMR} and infer $\int_{B_{R_k}}f(u_k)\varphi\to\int f(\uinfty)\varphi$, one should verify that $\left(f(\tu_k)\tu_k\varphi\right)_k$ is uniformly bounded in $L^1(\supp\,\varphi)$:
	$$\int_{\supp\,\varphi}\left|f(\tu_k)\tu_k\varphi\right|\les\int_{\supp\,\varphi}\left(1+|\tu_k|^p\right)|\tu_k|\les 1+\int_{\supp\,\varphi}|u_k|^{p+1}\leq C$$
	since $p<2_*-1$ by ($f_3$).
	
	Let now $N=4$; by \eqref{f_above_N=4} with $\varepsilon=1$ and $q\geq1$, Lemma \ref{estimate_Sani}, and \eqref{iii_unifbdd}, we infer
	\begin{equation*}
		\begin{split}
			\int_{\supp\,\varphi}f(\tu_k)|\varphi|&\leq\int_{\supp\,\varphi}|\tu_k||\varphi|+C\int_{\supp\,\varphi}|\tu_k|^q\left(\e^{\alpha\tu_k^2}-1\right)|\varphi|\\
			&\les\|\tu_k\|_2+\left(\int|\tu_k|^{qr'}\right)^\frac1{r'}\left(\int\left(\e^{\alpha\tu_k^2}-1\right)^r\right)^\frac1r\\
			&\les\sqrt{\frac{2\mu c_1}{(\mu-2)\gamma_0}}+C(q,r)\iii{\tu_k}^q\left(\int\left(\e^{\alpha r\tu_k^2}-1\right)\right)^\frac1r
		\end{split}
	\end{equation*}
	with $r>1$. By choosing now $\alpha<\tfrac{16\pi^2(\mu-2)}{r\mu C_{eq}c_1}$ we get
	\begin{equation*}
		\alpha r\|\tu_k\|_{H^2(\R^4)}^2\leq\alpha rC_{eq}\iii{\tu_k}^2\leq\alpha rC_{eq}\frac{2\mu}{\mu-2}c_1\leq32\pi^2,
	\end{equation*}
	and therefore by Adams' inequality and \eqref{iii_unifbdd} we get
	\begin{equation*}
		\int_{\supp\,\varphi}f(\tu_k)|\varphi|\leq C.
	\end{equation*}
	Again by Fatou's lemma we have $\int_{\supp\,\varphi}f(\uinfty)|\varphi|<+\infty$ and with similar steps we also get                                                                                                                                                                                                                                                                                                                                                                                                                                                                                                            $\int_{\supp\,\varphi}\left|f(\tu_k)\tu_k\varphi\right|\leq C$. The proof is completed applying again \cite[Lemma 2.1]{dFMR}.
\end{proof}

\begin{lem}\label{lem_conv}
	We have
	$$\int f(\tu_k)\tu_k\to\int f(\uinfty)\uinfty\quad\ \,   \mbox{as}\ \,\,k\to+\infty.$$
\end{lem}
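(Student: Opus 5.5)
Since $\tu_k\to\uinfty$ strongly in $L^q(\R^N)$ for every $q\in(2,2_*)$ (resp. $q\in(2,\infty)$ if $N=4$) by Proposition \ref{exists_uinfty}, the plan is a Vitali/dominated-convergence argument. We split $f(t)t$ into a part of quadratic order near $0$, controlled by a small multiple of $t^2$, and a part of intermediate power growth, controlled through the strong $L^q$ convergence. The $L^2$ part cannot be handled by compactness, since radial compactness fails at $q=2$. We compensate with ($f_2$): for every $\varepsilon>0$ one has $|f(t)t|\leq\varepsilon t^2+C_\varepsilon|t|^{p+1}$ if $N\geq5$, by \eqref{f_above_N>=5}. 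In the case $N=4$ one has instead $|f(t)t|\leq\varepsilon t^2+C|t|^{q+1}(\e^{\alpha t^2}-1)$ by \eqref{f_above_N=4}.

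\textbf{Case $N\geq5$.} Write
\[
\Big|\int f(\tu_k)\tu_k-\int f(\uinfty)\uinfty\Big|\leq\int\big|f(\tu_k)\tu_k-f(\uinfty)\uinfty\big|.
\]
First truncate: let $\eta\in C(\R)$ with $\eta(t)=f(t)t$ for $|t|\leq\delta$ and $|\eta(t)|\leq\varepsilon t^2$ on all of $\R$; this is possible by ($f_2$). Put $h(t)=f(t)t-\eta(t)$. Then $|h(t)|\leq C_\varepsilon(|t|^{q_1}+|t|^{p+1})$ for some fixed $q_1\in(2,2_*)$, and $p+1\in(2,2_*)$ by ($f_3$). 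The $\eta$-contribution is bounded by
\[
\varepsilon\big(\|\tu_k\|_2^2+\|\uinfty\|_2^2\big)\leq C\varepsilon,
\]
using \eqref{iii_unifbdd}. For the $h$-contribution, strong $L^{q_1}\cap L^{p+1}$ convergence yields a subsequence with $\tu_k\to\uinfty$ a.e. and an $L^{q_1}\cap L^{p+1}$ dominating function (partial converse of dominated convergence). Continuity of $h$ and dominated convergence then give $\int h(\tu_k)\to\int h(\uinfty)$. Since the argument applies to every subsequence, the full sequence converges. Letting $\varepsilon\to0$ concludes this case.

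\textbf{Case $N=4$.} The truncation is the same; the difficulty is the exponential remainder. We bound it uniformly in a higher power: with $r>1$ close to $1$ and $\alpha$ small, Hölder, Lemma \ref{estimate_Sani} and the Adams inequality \eqref{Adams_RS} give
\[
\int |h(\tu_k)|^{s}\leq C \quad\text{for some } s>1,
\]
exactly as in the proof of Proposition \ref{exists_uinfty}. This uses the uniform bound $\alpha rC_{eq}\iii{\tu_k}^2\leq 32\pi^2$, which holds because $\iii{\tu_k}^2\leq\frac{2\mu}{\mu-2}c_1$.

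To pass to the limit we also need control at infinity. Radial decay (the Strauss-type bound for $H^2_{\rad}$, or strong $L^{q}$ convergence) makes $|\tu_k|$ uniformly small outside a large ball $B_\rho$. There $h(\tu_k)=0$ once $|\tu_k|\leq\delta$, which gives tightness. Inside $B_\rho$, the uniform $L^s$ bound gives uniform integrability, and Vitali's theorem together with a.e. convergence yields $\int h(\tu_k)\to\int h(\uinfty)$.

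\textbf{Main obstacle.} I expect the hardest step to be the uniform smallness of $\tu_k$ outside large balls for $N=4$, which is needed so that $h(\tu_k)$ vanishes there. I would derive it from the radial lemma $|v(x)|\leq C|x|^{-(N-1)/2}\|v\|_{H^1}$ applied to the bounded family $\tu_k$. Alternatively, I would replace the $L^\infty$ smallness by an estimate of $\int_{B_\rho^c}|h(\tu_k)|$ via Hölder with the strong $L^q$ tails.
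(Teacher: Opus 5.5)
Your proof is correct, but it is organized differently from the paper's.

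\emph{The paper's route.} It uses the product splitting $f(\tu_k)\tu_k-f(\uinfty)\uinfty=f(\tu_k)(\tu_k-\uinfty)+\big(f(\tu_k)-f(\uinfty)\big)\uinfty$. The first term is handled by the growth bound and H\"older's inequality: the $\varepsilon|t|$ part gives $\varepsilon\|\tu_k\|_2\|\tu_k-\uinfty\|_2\leq C\varepsilon$, and the power (or exponential) part is paired with strong convergence in $L^{p+1}$ (or $L^{t'}$). The second term is split between a large ball and its exterior. On the ball the paper uses the $L^1_{\rm loc}$ convergence $f(\tu_k)\to f(\uinfty)$, already obtained in Proposition \ref{exists_uinfty} via \cite[Lemma 2.1]{dFMR}, together with $\uinfty\in L^\infty_{\rm loc}$ from elliptic regularity. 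On the exterior it uses the $L^2$ and $L^{p+1}$ tails of the fixed function $\uinfty$.

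\emph{Your route.} You treat $t\mapsto f(t)t$ as a single Nemytskii map and peel off an $\varepsilon t^2$-small piece near zero. This is the same use of ($f_2$) to compensate for the lack of compactness in $L^2$. You then pass to the limit in the remainder by dominated convergence via the partial converse of Lebesgue's theorem when $N\geq5$. When $N=4$ you use a uniform $L^s$ bound, tightness from the radial lemma, and Vitali's theorem.

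\emph{What each buys.} Your argument is more self-contained: it needs neither the local boundedness of $\uinfty$ nor the de Figueiredo--Miyagaki--Ruf lemma. For $N\geq5$ it reduces to continuity of the Nemytskii operator $L^{p+1}\to L^1$, since $|h(t)|\leq C_\varepsilon|t|^{p+1}$. The paper's argument reuses facts it has already established. In dimension four it only needs H\"older estimates against strong $L^{t'}$ convergence and the $L^{t'}$ tails of $\uinfty$. It does not need the pointwise Strauss decay you invoke, which the paper only introduces later, as Lemma \ref{radial_lemma}, for the critical case.
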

\begin{proof}
	Once more, we need to distinguish between the fourth-dimensional case and the higher-dimensional case. Let us begin with $N\geq 5$. We split
	\begin{equation*}
		\left|\int f(\tu_k)\tu_k-\int f(\uinfty)\uinfty\right|\leq\int f(\tu_k)|\tu_k-\uinfty|+\left|\int\left(f(\tu_k)-f(\uinfty)\right)\uinfty\right|=:T_1+T_2,
	\end{equation*}
	and estimate the two terms separately. For any $\varepsilon>0$ using ($f_3$) we obtain
	\begin{equation*}
		T_1\leq\int\left(\varepsilon|\tu_k|+C_\varepsilon|\tu_k|^p\right)|\tu_k-\uinfty|\leq\varepsilon\|\tu_k\|_2\|\tu_k-\uinfty\|_2+C_\varepsilon\|\tu_k\|_{pq}^p\|\tu_k-\uinfty\|_{q'}
	\end{equation*}
	for $q>1$. Since $(\tu_k)_k$ is bounded in $H^2(\R^N)$ by Proposition \ref{exists_uinfty} and $\uinfty\in H^2(\R^N)$, the first term in $T_1$ is bounded by $C\varepsilon$ for some suitable constant $C$ independent of $\varepsilon$. Moreover, choosing $q=\frac{p+1}p$ we get $\|\tu_k\|_{pq}\leq C\|\tu_k\|_{H^2(\R^N)}\leq C$ and $q'=p+1\in(2,2_*)$, so by the compact embedding the second term vanishes in the limit. Let $\varepsilon>0$ and $\bR>0$ to be fixed later and split
	\begin{equation}\label{T_2}
		T_2\leq\left|\int_{B_\bR}\left(f(\tu_k)-f(\uinfty)\right)\uinfty\right|+\left|\int_{\R^N\setminus B_\bR}\left(f(\tu_k)-f(\uinfty)\right)\uinfty\right|.
	\end{equation}
	By elliptic regularity and continuity of $f$ we deduce that $\uinfty$ is smooth and $\|\uinfty\|_{L^\infty(B_\bR)}\leq C_\bR$. Moreover, in the proof of Proposition \ref{exists_uinfty} we have already shown that $f(\tu_k)\to f(\uinfty)$ in $L^1_{\text{loc}}(\R^N)$. The two facts combined prove that
	\begin{equation*}
		\int_{B_\bR}\left|f(\tu_k)-f(\uinfty)\right||\uinfty|\to0\qquad\mbox{as}\ \,k\to+\infty
	\end{equation*}
	for all $\bR>0$. Let us fix $\varepsilon>0$. Since $f(\uinfty)\uinfty\in L^1(\R^N)$, there exists $R_0=R_0(\varepsilon)$ such that if $\bR>R_0$ then
	\begin{equation*}
		\left|\int_{\R^N\setminus B_\bR}f(\uinfty)\uinfty\right|<\varepsilon\,.
	\end{equation*}
	Moreover,
	\begin{equation*}
		\begin{split}
			\left|\int_{\R^N\setminus B_\bR}f(\tu_k)\uinfty\right|&\les\int_{\R^N\setminus B_\bR}|\tu_k||\uinfty|+\int_{\R^N\setminus B_\bR}|\tu_k|^p|\uinfty|\\
			&\leq\|\tu_k\|_2\|\uinfty\|_{L^2(\R^N\setminus B_\bR)}+\|\tu_k\|_{pq}^p\|\uinfty\|_{L^{q'}(\R^N\setminus B_\bR)}.
		\end{split}
	\end{equation*}
	Choosing again $q=\frac{p+1}p$, we get $\|\tu_k\|_{pq}\leq C\|\tu_k\|_{H^2(\R^N)}\leq C$ by Proposition \ref{exists_uinfty}. Since $q'=p+1\in(2,2_*)$, there exists $R_1=R_1(\varepsilon)$ such that
	$$\|\uinfty\|_{L^2(\R^N\setminus B_\bR)}<\varepsilon\quad\ \mbox{and}\quad\ \|\uinfty\|_{L^{q'}(\R^N\setminus B_\bR)}<\varepsilon\,.$$
	Hence, if $\bR>\max\{R_0,R_1\}$, one infers that $T_2\leq C\varepsilon$ in \eqref{T_2}, with a constant $C$ independent of $\varepsilon$ if $k$ is large enough. This concludes the proof for the case $N\geq 5$ by letting first $k\to+\infty$ and then $\varepsilon\to0$. The proof in the case $N=4$ is analogous, provided \eqref{f_above_N>=5} is replaced by \eqref{f_above_N=4}. For instance, letting $\alpha,q>0$, we estimate $T_1$ as
	\begin{equation*}
		\int f(\tu_k)|\tu_k-\uinfty|\leq\varepsilon\|\tu_k\|_2\|\tu_k-\uinfty\|_2+C_\varepsilon\int|\tu_k|^q\left(\e^{\alpha\tu_k^2}-1\right)|\tu_k-\uinfty|\,,
	\end{equation*}
	where the first term is estimated as for the case $N\geq5$, and in the second we apply Lemma \ref{estimate_Sani} to get
	\begin{equation*}
		\int|\tu_k|^q\left(\e^{\alpha\tu_k^2}-1\right)|\tu_k-\uinfty|\leq\left(\int|\tu_k|^{qt}\left(\e^{\alpha t\tu_k^2}-1\right)\right)^\frac1t\left(\int|\tu_k-\uinfty|^{t'}\right)^\frac1{t'}
	\end{equation*}
	and the first term is bounded uniformly with respect to $k$ as in the proof of Proposition \ref{exists_uinfty}, while the second converges to $0$ since $\tu_k\to\uinfty$ in $L^{t'}(\R^N)$ for all $t'>2$.
\end{proof}

\begin{proof}[Proof of Theorem \ref{Thm2}]
	By Proposition \ref{exists_uinfty} we know that $\uinfty$ is a weak radial solution of \eqref{eq}, hence it is regular. Since $J_{R_k}'(u_k)u_k=0$ and also $J'(\uinfty)\uinfty=0$, by Lemma \ref{lem_conv} one gets $\iii{u_k}_{R_k}\to\iii{\uinfty}$. This implies first that $\tu_k\to\uinfty$ in $H^2(\R^N)$, and moreover that
	\begin{equation}\label{conv_MPlevel}
		c_\infty:=\lim_{k\to+\infty}c_{R_k}=\lim_{k\to+\infty}J_{R_k}(u_k)=J(\uinfty).
	\end{equation}
	Since $c_R>\delta>0$ by Lemma \ref{MPlevel_bdd}, then $J(\uinfty)=c_\infty\geq\delta$, and therefore $\uinfty$ is nontrivial.
	
	Let us prove first that $\uinfty\geq0$ in $\R^N$. Note that so far we just used the fact that $\iii{\cdot}$ is a norm on $H^2(\R^N)$, i.e. the bound $\gamma>-2\sqrt\gamma_0$. Now, in order to use Lemma \ref{Pos_CT}, we need to restrict to $\gamma\geq0$. Let $\mu_B:=\mu_0(B_1)$ and scale the problem in $B_R$ to the corresponding in $B_1$. In other words, if $u_R\in H^2_{0,\rad}(B_R)$ is a solution of \eqref{eq_R}, define $v\in H^2_{0,\rad}(B_1)$ as $v(\cdot):=u(R\cdot)$. Then $v$ solves
	\begin{equation}\label{eq_1_v}
		\begin{cases}
			\Delta^2v-R^2\gamma\Delta v+R^4\gamma_0 v=R^4f(v)&\;\mbox{in}\;B_1,\\
			v=\partial_\nu v=0&\;\mbox{on}\;\dB_1.
		\end{cases}
	\end{equation}
	In order for \eqref{eq_1_v} to enjoy positivity by Lemma \ref{Pos_CT}, one should require $R^2\gamma+R^4\gamma_0>\mu_B$, which is always true for large enough $R$ for $\gamma\geq0$ and $\gamma_0>0$ fixed. This implies that there exists $R_0=R_0(\gamma,\gamma_0,N)>0$ such that the mountain-pass solution $u_R$ of \eqref{eq_R} at level $c_R$ is positive in $B_R$ for $R\geq R_0$. By pointwise convergence, it is then immediate to infer that $u_\infty\geq0$ in $\R^N$. 
	
	\vskip0.2truecm
	 In order to conclude the proof, we show that $\uinfty$ is a \textit{radial ground state} solution for \eqref{eq}.
	To this aim, we are going to construct a mountain-pass path $\gamma_R \in C([0,1], H^2_{0,\rad}(B_R))$ such that
	\begin{enumerate}[($f_1$)]
		\item[(i)] $J_R(\gamma_R(0))=0$, $J_R(\gamma_R(1))<0$;
		\item[(ii)] for some $t_R\in (0,1)$ one has
		\begin{equation}\label{4.7bis}
			c_R\leq J_R(\gamma_R(t_R))=\max_{t\in[0,1]}J_R(\gamma_R(t))=:C_R
		\end{equation}	
		and $\lim\limits_{R\rightarrow+\infty}C_R= c_{min}$, where
		$$c_{min}:=\inf_{u\in\cN}J(u)$$
		is the least energy in $H^2_{\rad}(\R^N)$.
	\end{enumerate}
	Note that under assumptions ($f_1$)-($f_5$) the Nehari manifold associated to the functional $J$
	\begin{equation}\label{4.7ter}
		\cN:=\left\{u\in H_{\rad}^2(\R^N)\setminus\{0\}\,|\,J'(u)u=0\right\}
	\end{equation}
	is well-defined and contains all critical points of $J$. Moreover, it is easy to infer that for any $u\in H^2(\R^N)\setminus\{0\}$ there exists a unique $t_u>0$ such that
	\begin{equation}\label{max_subcritical}
		J(t_uu)=\max_{t>0}J(tu)\quad\text{and}\quad t_uu\in\cN.
	\end{equation}
	The existence of a radial ground state solution for $J$ can be obtained by following the same arguments as in \cite[Theorem 4.23]{Rabinowitz92}, as in \cite[Lemma 2.3]{Pimenta14} for the case of polynomial nonlinearities in dimension $N\geq5$, and\footnote{The fact that the radial mountain-pass solution found in \cite{Sani_critical} is a radial ground state follows from ($f_5$) as in \cite[Section 5]{dOM}.} \cite[Theorem 1.1]{Sani_subcritical} for the conformal case $N=4$. Let $\bar{u}\in H^2(\R^N)$ be such a radial ground state solution of \eqref{eq}. We define $\bar{u}_R:=\phi_R\bar{u}$, where $\phi_R:=\phi\left(\frac\cdot R\right)$ with $\phi\in C_0^\infty(\R^N,[0,1])$ a radial cut-off function such that $\phi(|x|)=1$ for $|x|\leq
	\frac12$, $\phi(|x|)\in(0,1)$ for $\frac12<|x|<1$, and $\phi(|x|)=0$ for $|x|\geq 1$. Hence, $\bar{u}_R\in H^2_{0,\rad}(B_R)$ and $\bar u_R\to\bar u$ in $H^2(\R^N)$.
	Choose $T_0>0$ large enough such that $J(T_0\bar{u})<0$, then we define the path $\gamma_R$ as follows
	$$\gamma_R(t):=tT_0\bar{u}_R,\quad \text{for}\,\, t\in[0,1]\,\,\text{and}\,\, R>0\,\,\text{large\,enough}.$$
	It is obvious that $J_R(\gamma_R(0))=0$, $J_R(\gamma_R(1))<0$, that is, conclusion (i) holds true. Moreover, $\gamma_R(t)$ is continuous in $t$. As a result, there exists $t_R\in(0,1)$ such that \eqref{4.7bis} holds. Recalling \eqref{max_subcritical} and the fact that $\bar{u}\in H^2(\R^N)$ is a radial ground state solution of \eqref{eq}, we can obtain immediately
	\begin{equation}\label{max1}
		c_{min}=J(\bar{u})=\max_{t>0}J(t\bar{u}).
	\end{equation}
	Since $\bar u_R\to\bar u$ in $H^2(\R^N)$, it then follows that
	$$\lim_{R\to+\infty}C_R=\lim_{R\to+\infty}\max_{t\in[0,1]}J_R(tT_0\bar{u}_R)=\max_{t\in[0,1]} J(tT_0\bar{u})\leq c_{min},$$
	which implies by \eqref{conv_MPlevel} and \eqref{4.7bis} that
	\begin{equation}\label{cmin}
		c_\infty=\lim_{R\to+\infty}c_R\leq\lim\limits_{R\to+\infty}C_R\leq c_{min}.
	\end{equation}
	Recall that $u_\infty$ is a weak solution of \eqref{eq} with $J(u_\infty)=c_\infty$. Then by the definition of $c_{min}$, we have $c_\infty\geq c_{min}$, which, together with \eqref{cmin}, yields $c_\infty=c_{min}$. Lemma \ref{lem:no-zero-spheres} now implies $\uinfty(x)>0$ for every $x\ne0$. If $\gamma^2\geq4\gamma_0$, Proposition \ref{prop:factorization-positive} also gives $\uinfty(0)>0$. This concludes the proof.
\end{proof}

\section{The critical case for $\boldsymbol{N\geq5}\,$: Proof of Theorem \ref{Thm-C-5-general}}\label{Section_crit_largedim}
The expanding-domain strategy of Section \ref{Section_Subcritical} also
applies to critical growth after the required compactness estimates are
established. The energy comparison needs the existence of a radial ground
state of \eqref{eq}; this permits a special mountain-pass path and identifies
the limiting level with the least radial energy. Lemma
\ref{lem:no-zero-spheres} can then be applied to obtain positivity away from
the origin. We first establish the needed ground-state existence in the
setting of Theorem \ref{Thm-C-5-general}.

In this case, the corresponding energy functional is
$$
J(u)=\frac12\iii{u}^2-\int \bigg(G(u)+\frac{u_+^{2_*}}{2_*}\bigg),
$$
where we recall that $2_*:=\frac{2N}{N-4}$, $G(u):=\int^u_0g(t)\dd t$, $u\in H_{\rad}^2(\R^N)$ and its Nehari manifold $\cN$ is defined in \eqref{4.7ter}. It is clear that all critical points of $J$ must lie on $\cN$. Moreover, it is not difficult to verify that the energy functional $J$ is bounded below and coercive on $\cN$. Indeed, for any fixed $u\in\cN$,
\begin{equation}\label{bound-below}
	\aligned
		J(u)&=J(u)-\frac{1}{\mu}J'(u)u=\frac{\mu-2}{2\mu}\iii{u}^2
		+\frac{2_*-\mu}{2_*\mu}\int u_+^{2_*}+\int \bigg(\frac{1}{\mu}g(u)u-G(u)\bigg),
	\endaligned
\end{equation}
which implies the desired conclusion by ($f_4$). Note that ($f_1$) and ($f_5$) imply
$$
g'(t)t^2-g(t)t\geq0\quad\text{for}\,\,t\geq0.
$$
Define now $I:H_{\rad}^2(\R^N)\to\R$ as
$$I(u):=J'(u)u=\iii{u}^2-\int(g(u)u+u_+^{2_*}),$$
then for any $u\in \cN$,
\begin{equation}\label{condmim}
	\aligned
		I'(u)u&=2\iii{u}^2-\int \bigg((g'(u)u^2+g(u)u)+2_*u_+^{2_*}\bigg)\\
		&=-\int \bigg[(g'(u)u^2-g(u)u)+(2_*-2)u_+^{2_*}\bigg]<0\,.
	\endaligned
\end{equation}
This implies that $\cN$ is a $C^1$ manifold with codimension $1$, and thus that the Nehari manifold is a natural constraint for the functional $J$. 
 Since $J$ is bounded from below on $\cN$ by \eqref{bound-below}, we may define its minimal energy as
$$
c_{min}:=\inf_{u\in\cN}J(u).
$$
Observe that for $u\in\cN$, by the Sobolev embedding \cite{Lions} and ($f_3'$) one has for any $\varepsilon>0$, there exists $C_\varepsilon>0$ such that
$$
\iii{u}^2=\int(g(u)u+u_+^{2_*})\les \varepsilon\iii{u}^2 +C_\varepsilon\iii{u}^q+\iii{u_+}^{2_*},
$$
which implies that there exists $C>0$ independent of $u\in \cN$ such that $\iii{u}^2\geq C$. This, combined with \eqref{bound-below}, yields $c_{min}>\frac{\mu-2}{2\mu} C>0$. Moreover, it is easy to obtain that for any $u\in H^2(\R^N)\setminus\{0\}$, there exists a unique $t_u>0$ such that
\begin{equation}\label{max}
	J(t_uu)=\max_{t>0}J(tu)\quad\text{and}\quad t_uu\in\cN.
\end{equation}

As usual, we define the homogeneous Sobolev space $D^{2,2}_0(\R^N)$ as the completion of $C^\infty_0(\R^N)$ with respect to the norm\footnote{Actually, the space $D^{2,2}_0(\R^N)$ is more commonly defined as the completion with respect to the norm $\|D^2\cdot\|_2$, i.e. with respect to the full tensor of all second derivatives. However, the two definitions are equivalent since, by integration by parts, $\|D^2\cdot\|_2$ and $\|\nabla^2\cdot\|_2$ are equivalent norms on $C^\infty_0(\R^N)$, see e.g. \cite[Sec.2.2.1]{GGS}.}
\begin{equation*}
	\|u\|_{D^{2,2}_0(\R^N)}:=\left(\int_{\R^N}|\Delta u|^2\right)^\frac12.
\end{equation*}
It is well-known that $D^{2,2}_0(\R^N)\hookrightarrow L^{2_*}(\R^N)$ holds for $N\geq5$, and we denote by $S$ its best constant, namely
\begin{equation}\label{S}
	S:=\inf\left\{\|\Delta u\|_2^2\,\,\big|\, u\in D^{2,2}_0(\R^N),\,\|u\|_{2_*}=1\right\}.
\end{equation}
\begin{prop}\label{lem_conv_crit}
	If $c_{min}<\frac2NS^\frac N4$, then there exists $\bu\in\cN$ such that $J(\bu)=c_{min}$. Moreover, $\bu$ is a radial ground state solution of \eqref{eq}.
\end{prop}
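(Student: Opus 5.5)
Take a minimizing sequence $(u_n)\subset\cN$ with $J(u_n)\to c_{min}$, show that it converges in $H^2_{\rad}(\R^N)$ up to translation-free compactness, and conclude by the natural-constraint property \eqref{condmim}. By Ekeland's variational principle on the $C^1$ manifold $\cN$, we may assume $(u_n)$ is a Palais--Smale sequence for $J|_{\cN}$. Since $I'(u)u<0$ on $\cN$ uniformly away from zero along the sequence, the Lagrange multiplier vanishes in the limit and $J'(u_n)\to0$ in $H^{-2}$. From \eqref{bound-below} and ($f_4$), $(u_n)$ is bounded in $H^2_{\rad}(\R^N)$. Passing to a subsequence, $u_n\rightharpoonup\bu$ weakly in $H^2$, strongly in $L^s(\R^N)$ for $s\in(2,2_*)$ by the radial compact embedding \cite{Lions}, and a.e. Arguing as in Proposition \ref{exists_uinfty} with \cite[Lemma 2.1]{dFMR}, $\bu$ is a weak solution of \eqref{eq}, i.e. $J'(\bu)=0$.

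The main obstacle is nontriviality of $\bu$ and strong convergence, because of the critical term. I would use a Brezis--Lieb splitting with $v_n:=u_n-\bu$. The subcritical part $\int g(u_n)u_n$ and $\int G(u_n)$ converge to the corresponding quantities for $\bu$, by the growth bound in ($f_3'$) with $q+1\in(2,2_*)$ and the strong $L^{q+1}$ convergence. Brezis--Lieb gives $\iii{u_n}^2=\iii{\bu}^2+\iii{v_n}^2+o(1)$ and $\int (u_n)_+^{2_*}=\int\bu_+^{2_*}+\int (v_n)_+^{2_*}+o(1)$. Combining $J'(u_n)u_n=0$ with $J'(\bu)\bu=0$ then yields
\[
\iii{v_n}^2-\int (v_n)_+^{2_*}=o(1).
\]
Suppose $\iii{v_n}^2\to\ell>0$. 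Since $\|\Delta v_n\|_2^2\leq\iii{v_n}^2$, the Sobolev inequality \eqref{S} gives $\ell\geq S\ell^{2/2_*}$, hence $\ell\geq S^{N/4}$. Then $J(u_n)=J(\bu)+\frac12\iii{v_n}^2-\frac1{2_*}\int(v_n)_+^{2_*}+o(1)$, so
\[
c_{min}\geq J(\bu)+\frac2N S^{N/4}.
\]
Since $J(\bu)\geq0$ for a critical point, by \eqref{bound-below}, this contradicts $c_{min}<\frac2NS^{N/4}$. Therefore $\ell=0$ and $u_n\to\bu$ strongly in $H^2$. As $\iii{u}^2\geq C>0$ on $\cN$, we get $\bu\neq0$, so $\bu\in\cN$ and $J(\bu)=c_{min}$.

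Finally, $\bu$ is a critical point of $J$ on $H^2_{\rad}$, and by the principle of symmetric criticality it is a weak solution of \eqref{eq} on $H^2(\R^N)$. Its energy is minimal among radial solutions, since all of them lie in $\cN$. Hence $\bu$ is a radial ground state. The delicate points I expect to need care are two. The first is checking that the Lagrange multiplier vanishes, which needs $\limsup I'(u_n)u_n<0$; this follows from $\int (u_n)_+^{2_*}+\int g(u_n)u_n=\iii{u_n}^2\geq C$. The second is the Brezis--Lieb step for the positive part, which is handled since $t\mapsto t_+^{2_*}$ is $C^1$ with the standard growth.
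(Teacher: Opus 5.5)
Your proof is correct, and it takes a genuinely different route from the paper's.

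The paper works with a bare minimizing sequence on $\cN$, which carries no information on $J'(u_n)$. It proceeds in four steps:
\begin{enumerate}
\item it excludes $\bu\equiv0$ by a separate Sobolev-constant argument;
\item it uses Fatou's lemma and the fibering parameter $t_{\bu}$ (with $t_{\bu}\bu\in\cN$) to get $t_{\bu}\geq1$;
\item in the case $t_{\bu}>1$, it uses $J'(\bu)\bu>0$ and the Brezis--Lieb splitting to reach only the one-sided bound $\iii{v_n}^2\leq\int(v_n)_+^{2_*}+o_n(1)$, and then runs the comparison with $\frac2NS^{N/4}$;
\item it invokes the Lagrange multiplier rule only at the minimizer, where the pointwise sign $I'(\bu)\bu<0$ suffices.
\end{enumerate}

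You instead use Ekeland to pass to a Palais--Smale sequence, so the weak limit is a critical point from the outset. This gives you several things for free:
\begin{itemize}
\item $J'(\bu)\bu=0$ and $J(\bu)\geq0$;
\item the exact identity $\iii{v_n}^2-\int(v_n)_+^{2_*}=o(1)$ rather than an inequality;
\item nontriviality, from strong convergence together with $\inf_{\cN}\iii{\cdot}>0$.
\end{itemize}
As a result, both the separate $\bu\neq0$ step and the case analysis on $t_{\bu}$ disappear. The paper's route, in exchange, never needs any information on $J'$ along the sequence.

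The price of your route is uniform control of the multipliers along the whole sequence, and your one-line justification there is a little quick. Since $I'(u_n)u_n\leq-(2_*-2)\int(u_n)_+^{2_*}$, what you actually need is $\liminf_n\int(u_n)_+^{2_*}>0$; the identity $\int(u_n)_+^{2_*}+\int g(u_n)u_n=\iii{u_n}^2$ alone does not give this. It does hold, though. Suppose $\int(u_n)_+^{2_*}\to0$ along a subsequence. Interpolation with the uniform $L^2$ bound then gives $(u_n)_+\to0$ in $L^{q+1}$, with $q+1\in(2,2_*)$. By ($f_2$) and ($f_3'$) this forces $\int g(u_n)u_n\to0$, which contradicts $\iii{u_n}^2\geq C$ on $\cN$.

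You also need $(I'(u_n))_n$ to be bounded in $H^{-2}$ in order to conclude $J'(u_n)\to0$. That rests on a growth bound for $g'$, which is the same tacit assumption the paper makes when it asserts that $\cN$ is a $C^1$ manifold.
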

\begin{proof}
	Take a sequence $(u_n)_n\subset\cN$ such that $J(u_n)\to c_{min}$ as $n\to+\infty$. It is easy to verify that $(u_n)_n$ is bounded in $H^2(\R^N)$ uniformly for $n$. Therefore, there exists $\bu\in H_{\rad}^2(\R^N)$ such that, up to a subsequence,
	$$
	\aligned
		&u_n\rightharpoonup\bu\quad\text{in}\,\,H^2(\R^N),\\
		&u_n\to\bu\quad\text{in}\,\,L^s(\R^N),\,\,s\in(2,2_*),\\
		&u_n\to\bu\quad\mbox{a.e.}\ \text{in}\ \,\R^N.
	\endaligned
	$$
	If $\bu\equiv0$, then from the above convergences and $J'(u_n)u_n=0$, one gets
	$\iii {u_n}^2=\int u_n^{2_*}+o_n(1)$. So, recalling
	$$
	{\frac{\|\Delta u_n\|^2}{\big(\int u_n^{2_*}\big)^{\frac2{2_*}}}\geq S}
	$$
	we deduce $\iii {u_n}^2\geq S^\frac N4+o_n(1)$. It then follows from $(u_n)_n\subset\cN$ that
	$$
	\aligned
		\frac2NS^\frac N4&>c_{min}=J(u_n)+o_n(1)\\
		&=J(u_n)-\frac{1}{2_*}J'(u_n)u_n+o_n(1)\\
		&=\left(\frac12-\frac1{2_*}\right)\iii {u_n}^2+o_n(1)\geq\frac2{N}S^\frac N4.
	\endaligned
	$$
	This is a contradiction, hence necessarily $\bu\not\equiv0$. Using Fatou's lemma we obtain
	\begin{equation}\label{fatou}
		\aligned
			c_{min}+o_n(1)&=J(u_n)-\frac{1}{\mu}J'(u_n)u_n\\
			&=\frac{\mu-2}{2\mu}\iii{u_n}^2+\frac{2_*-\mu}{2_*\mu}\int (u_n)_+^{2_*}+\int \bigg(\frac{1}{\mu}g(u_n)u_n-G(u_n)\bigg)\\
			&\geq\frac{\mu-2}{2\mu}\iii{\bu}^2+\frac{2_*-\mu}{2_*\mu}\int \bu_+^{2_*}+\int\bigg(\frac1\mu g(\bu)\bu-G(\bu)\bigg)+o_n(1).
		\endaligned
	\end{equation}
	Moreover, by \eqref{max} there exists a unique $t_\bu>0$ such that $t_\bu\bu\in \cN$. Recalling the definition of $c_{min}$, we get
	$$
	\aligned
		J(t_\bu\bu)&=J(t_\bu\bu)-\frac1\mu J'(t_\bu\bu)t_\bu\bu\\
		&=\frac{\mu-2}{2\mu}t_\bu^2\iii{\bu}^2+\frac{2_*-\mu}{2_*\mu}t_\bu^{2_*}\int \bu_+^{2_*}+
		\int\bigg(\frac1\mu g(t_\bu\bu)t_\bu\bu-G(t_\bu\bu)\bigg)\\
		&\geq c_{min},
	\endaligned
	$$
	which implies by \eqref{fatou} that $t_\bu\geq1$.
	
	Set now $v_n:=u_n-\bu$, then it follows by Brezis-Lieb lemma, see \cite{BrLi}, that
	\begin{equation}\label{BL}
		\aligned
			&\iii {u_n}^2=\iii {v_n}^2+\iii\bu^2+o_n(1),\\
			&\int |{u_n}|^2=\int |{v_n}|^2+\int|\bu|^2+o_n(1),\\
			&\int G(u_n)=\int G(v_n)+\int G(\bu)+o_n(1),\\
		    &\int g(u_n)u_n=\int g(v_n)v_n+\int g(\bu)\bu+o_n(1),\\
			&\int (u_n)_+^{2_*}=\int (v_n)_+^{2_*}+\int\bu_+^{2_*}+o_n(1).
		\endaligned
	\end{equation}

	If $t_\bu=1$, then $\bu\in\cN$ and therefore it is a ground state by using Fatou's Lemma. If, instead, $t_\bu>1$, since $\bu\not\equiv0$, we claim that $J(\bu)>0$ and $J'(\bu)\bu>0$. { Indeed,
	\begin{equation*}
		J(tu)=t^2\left(\frac12\iii u^2-\int\frac{G(tu)}{t^2}-\frac{t^{2_*-2}}{2_*}\int|u|^{2_*}\right)
	\end{equation*}
	and note that the expression in the brackets is decreasing in $t$ by ($f_4$). Since $t_\bu>1$, then we obtain $J(\bu)>J(t_{\bu}\bu)>0$. On the other hand,
	\begin{equation*}
		J'(tu)tu=t^2\left(\iii u^2-\int\frac{g(tu)tu}{t^2}-t^{2_*-2}\int|u|^{2_*}\right)
	\end{equation*}
	and the second term in the product is decreasing in $t$ by ($f_5$), so $J'(\bu)\bu>J'(t_{\bu}\bu)t_{\bu}\bu=0$.}
	Hence, recalling $(u_n)_n\subset\cN$, from \eqref{BL} we infer
	\begin{equation}\label{splitting_Jvn}
		\begin{split}
			J'(v_{n})v_n&=J'(u_n)u_n-J'(\bu)\bu+o_n(1)<0.
		\end{split}
	\end{equation}
	Moreover, since $v_n\to0$ in $L^s(\R^N)$ for all $s\in(2,2_*)$, recalling ($f_2$) and ($f'_3$), see \eqref{f_above_N>=5}, one also has
	\begin{equation}\label{vanish}
		\int G(t_\bu v_n)=o_n(1)\quad\ \mbox{and}\ \quad\int g(t_\bu v_n)t_\bu v_n=o_n(1).
	\end{equation}
	Hence \eqref{splitting_Jvn} and \eqref{vanish} imply
	\begin{equation}\label{tu0}
		\iii{v_n}^2\leq\int|(v_n)_+|^{2_*}+o_n(1)
	\end{equation}
	Note that if $\iii {v_n}\geq C>0$ for sufficiently large $n$, then
	$$
	\frac{\iii {v_n}^2}{\big(\int |(v_n)_+|^{2_*}\big)^{\frac2{2_*}}}\geq S
	$$
	and this, together with \eqref{tu0}, yields $\iii {v_n}^2\geq S^\frac N4+o_n(1)$. Then by \eqref{BL}-\eqref{tu0} and\footnote{Note that we can always assume that $\mu<2_*$ in ($f_4$).} ($f_4$) one obtains
	$$
	\aligned
		\frac2NS^\frac N4&> J(u_n)+o_n(1)-\frac{1}{\mu}J'(u_n)u_n\\
		&=\frac{\mu-2}{2\mu}\iii{u_n}^2+\frac{2_*-\mu}{2_*\mu}\int (u_n)_+^{2_*}
		+\int \bigg(\frac{1}{\mu}g(u_n)u_n-G(u_n)\bigg)+o_n(1)\\
		&=\frac{\mu-2}{2\mu}\iii{\bu}^2+\frac{2_*-\mu}{2_*\mu}\int \bu_+^{2_*}+\int\bigg(\frac1\mu g(\bu)\bu-G(\bu)\bigg)+\frac{\mu-2}{2\mu}\iii{v_n}^2\\
		&\quad+\frac{2_*-\mu}{2_*\mu}\int(v_n)_+^{2_*}+o_n(1)\\
		&>\frac{\mu-2}{2\mu}\iii{v_n}^2+\frac{2_*-\mu}{2_*\mu}\int(v_n)_+^{2_*}+o_n(1)\\
		&\geq \bigg(\frac12-\frac{1}{2_*}\bigg)\iii{ v_{n}}^2\geq\frac2N S^\frac N4+o_n(1),
	\endaligned
	$$
	which is a contradiction. Thus, $\iii{v_n}=o_n(1)$, that is $u_n\to\bu$ in $H^2(\R^N)$, which in turns implies $J(\bu)=c_{min}$ and $\bu\in\cN$. It remains to prove that $\bu$ is a ground state solution of \eqref{eq}. Since $\cN$ is a $C^1$ manifold with codimension $1$ and $J(\bu)=c_{min}$ and $\bu\in\cN$, there exists a Lagrange multiplier $\lambda\in\R$ such that $J'(\bu)\bu=\lambda I'(\bu)\bu$. Since the left-hand side vanishes for all elements in $\cN$, and $I'(u)u<0$ for all $u\in\cN$ by \eqref{condmim}, we conclude that $\lambda=0$, which implies $J'(\bu)=0$ in $H^2(\R^N)$. The proof is complete.
\end{proof}

Observe from Proposition \ref{lem_conv_crit} that in order to prove the existence of a ground state solution $\bu\in H^2(\R^N)$ of \eqref{eq}, we still need to show that the minimal energy value satisfies $c_{min}<\frac2NS^\frac N4$. Here the assumptions on $p$ and $\nu$ in ($f_3'$) will play an essential role.

\begin{lem}\label{comp}
	Assume all conditions of Theorem \ref{Thm-C-5-general} hold, then $c_{min}<\frac2NS^\frac N4$.
\end{lem}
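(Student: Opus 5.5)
We test the Nehari characterisation \eqref{max} with truncated Talenti-type extremals of the Sobolev inequality \eqref{S}, in the spirit of Brezis--Nirenberg. Let
\[
U(x)=\frac{C_N}{(1+|x|^2)^{\frac{N-4}{2}}},
\]
which is radial and attains $S$. Set $U_\varepsilon(x)=\varepsilon^{-\frac{N-4}2}U(x/\varepsilon)$ and $u_\varepsilon=\phi\,U_\varepsilon$, where $\phi\in C^\infty_0(B_2)$ is a radial cut-off equal to $1$ on $B_1$. Then $u_\varepsilon\in H^2_{\rad}(\R^N)$. By \eqref{max} there is a unique $t_\varepsilon>0$ with $t_\varepsilon u_\varepsilon\in\cN$, so
\[
c_{min}\leq\max_{t>0}J(tu_\varepsilon).
\]
It therefore suffices to prove $\max_{t>0}J(tu_\varepsilon)<\frac2NS^{\frac N4}$ for $\varepsilon$ small.

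\textbf{Step 1 (expansions).} We use the standard estimates, as in Gazzola--Grunau--Sweers and Edmunds--Fortunato--Jannelli:
\[
\|\Delta u_\varepsilon\|_2^2=S^{\frac N4}+O(\varepsilon^{N-4}),\qquad \|u_\varepsilon\|_{2_*}^{2_*}=S^{\frac N4}+O(\varepsilon^N).
\]
The lower-order norm terms behave as follows:
\[
\|\nabla u_\varepsilon\|_2^2\sim
\begin{cases}
\varepsilon^2 & N\geq7,\\
\varepsilon^2|\ln\varepsilon| & N=6,\\
\varepsilon & N=5,
\end{cases}
\qquad
\|u_\varepsilon\|_2^2\sim
\begin{cases}
\varepsilon^4 & N\geq9,\\
\varepsilon^4|\ln\varepsilon| & N=8,\\
\varepsilon^{N-4} & N=5,6,7.
\end{cases}
\]
For the perturbation, ($f_3'$) gives
\[
\int G(tu_\varepsilon)\geq\frac{\eta}{p}\,t^p\int u_\varepsilon^p .
\]
A direct computation gives $\int u_\varepsilon^p\gtrsim\varepsilon^{N-\frac{(N-4)p}{2}}$ when $p>\frac N{N-4}$, which holds in the relevant range of $p$.

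\textbf{Step 2 (localising the maximiser).} By ($f_4$) and the fact that $J(tu_\varepsilon)<0$ for large $t$, the maximiser $t_\varepsilon$ stays in a compact subset of $(0,\infty)$, uniformly for small $\varepsilon$; this follows from the expansions in Step 1 together with $G\ge0$. Using the elementary bound
\[
\max_{t>0}\Bigl(\frac{t^2}{2}A-\frac{t^{2_*}}{2_*}B\Bigr)=\frac2N\Bigl(\frac{A}{B^{2/2_*}}\Bigr)^{\frac N4},
\]
we then get
\[
\max_t J(tu_\varepsilon)\leq\frac2NS^{\frac N4}+O(\varepsilon^{N-4})+C\gamma\|\nabla u_\varepsilon\|_2^2+C\gamma_0\|u_\varepsilon\|_2^2-c\,\eta\,\varepsilon^{N-\frac{(N-4)p}{2}} .
\]

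\textbf{Step 3 (comparison of exponents: main obstacle).} We need the negative term to dominate all the positive errors. That is, $N-\frac{(N-4)p}2$ must be strictly smaller than the exponent of the largest error, namely $2$ for $N\geq7$ (from $\gamma\|\nabla u_\varepsilon\|^2$), $2^-$ for $N=6$ (log), and $1$ for $N=5$. Solving $N-\frac{(N-4)p}{2}<2$ gives $p>\frac{2(N-2)}{N-4}$, which is exactly $\overline p(N)$ for $N\geq8$ and $N=6$. For $N=5$ the condition $5-\frac p2<1$ gives $p>8$, matching $\overline p(5)=8$.

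The case $N=7$ is the delicate one and has to be matched carefully. There $\|u_\varepsilon\|_2^2\sim\varepsilon^{3}$, while the gradient term forces $p>\frac{10}{3}$. The paper's value $\frac83$ suggests the comparison is made against a different error term, so I would recompute these bookkeeping constants carefully here; this is where I expect most of the work and possible corrections.

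In the complementary range $p\in(2,\overline p(N))$ the exponents compare the wrong way. There I would instead fix $\varepsilon=\varepsilon_0$ and let $\eta\to\infty$. Then $t_\eta\to0$, because
\[
\|t_\eta u_{\varepsilon_0}\|^2\geq\eta\, t_\eta^p\int u_{\varepsilon_0}^p ,
\]
so
\[
\max_t J(tu_{\varepsilon_0})\leq\max_t\Bigl(\frac{t^2}{2}\|u_{\varepsilon_0}\|^2-\frac{\eta t^p}{p}\int u_{\varepsilon_0}^p\Bigr)=C\,\eta^{-\frac2{p-2}}\to0 .
\]
This gives the threshold $\eta_0$ and hence $c_{min}<\frac2NS^{\frac N4}$ in that case as well.
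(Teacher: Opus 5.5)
\paragraph{Verdict.} Your route is the paper's: truncated Talenti bubbles in the Nehari characterisation \eqref{max}, the maximised Sobolev quotient for the critical part, a comparison between the exponent of $\eta\int u_\varepsilon^p$ and the positive error terms, and the large-$\eta$ regime below the threshold. Your variants are sound and slightly cleaner:
\begin{itemize}
\item A fixed cut-off equal to $1$ near the origin suffices in every dimension. Only an upper bound for $\|\Delta u_\varepsilon\|_2^2$ and a lower bound for $\|u_\varepsilon\|_{2_*}$ are used, whereas the paper imports special cut-offs from \cite{He21} for $N=5,6,7$.
\item The bound $\max_tJ(tu_{\varepsilon_0})\le C\eta^{-2/(p-2)}$ settles the large-$\eta$ regime for every $p\in(2,2_*)$, with no asymptotics in $\varepsilon$.
\end{itemize}
For $N=5$, $N=6$ and $N\geq8$ your exponents reproduce $\overline p(N)$. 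For $N=6$ you correctly keep the logarithm in $\|\nabla u_\varepsilon\|_2^2$; the paper omits it, which does not change the threshold.

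\paragraph{The gap: $N=7$.} As written, your proposal does not prove the lemma for $N=7$, $p\in(8/3,10/3]$ and arbitrary $\eta>0$. The statement requires this, since $\overline p(7)=8/3$. However, do not look for a different error term to compare against: your bookkeeping is right and the paper's is not.

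Recall that the paper's $\varepsilon$ is the square of your concentration scale $\lambda$. For $N=7$ the paper claims $\|\nabla\psi_\varepsilon\|_2^2=C\varepsilon^{3/2}+O(\varepsilon^2)$. This is exactly what produces $8/3$, via the condition $\tfrac72-\tfrac34p<\tfrac32$.

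That expansion cannot hold. In $\R^7$ one has $\nabla U\in L^2$. Moreover, for any smooth cut-off with $\phi(0)=1$, the annulus $\{\lambda<|x|<2\lambda\}$ alone contributes at least $c\lambda^2$ to $\|\nabla\psi_\varepsilon\|_2^2$. The reason is that $\phi\geq\tfrac12$ there, while the contribution of $U_\lambda\nabla\phi$ is $O(\lambda^4)$. Hence $\|\nabla\psi_\varepsilon\|_2^2\sim\varepsilon$ in the paper's variable. Inserted into the paper's own inequality, this gives $\tfrac72-\tfrac34p<1$, i.e.\ $p>\tfrac{10}{3}=\tfrac{2(N-2)}{N-4}$, which is your value.

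The threshold $8/3$ is the correct one only when $\gamma=0$. Then the gradient term disappears, and the competing errors are the $\lambda^3$ terms coming from $\|\Delta u_\varepsilon\|_2^2$ and from $\gamma_0\|u_\varepsilon\|_2^2$.

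\paragraph{What is actually proved.} For $N=7$, $\gamma>0$, $p\in(8/3,10/3]$ and small $\eta$, neither your argument nor the paper's establishes $c_{min}<\frac2NS^{\frac N4}$. When $\gamma>0$, the test-function method proves the lemma with $\overline p(N)=\frac{2(N-2)}{N-4}$ for every $N\geq6$. The range $p\in(8/3,10/3]$ is then covered only for $\eta>\eta_0$, by your large-$\eta$ argument. You should state this restriction explicitly rather than leave it as a bookkeeping discrepancy to be reconciled.
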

\begin{proof}
	It is well-known that the best constant $S$ defined in \eqref{S} is attained only by the functions $k U_{\varepsilon,x_0}$ for $k\in\R\setminus\{0\}$ and $U_{\varepsilon,x_0}$ defined by
	$$U_{\varepsilon,x_0}(x)=\frac{[N(N-4)(N^2-4)\varepsilon^2]^{\frac{N-4}8}}{(\varepsilon+|x-x_0|^2)^{\frac{N-4}2}},\quad\forall x_0\in\R^N,\,\forall\varepsilon>0\,.$$
	Indeed,
	$$\|\Delta U_{\varepsilon,x_0}\|_2^2=\|U_{\varepsilon,x_0}\|_{2_*}^{2_*}=S^{N/4},$$
	see \cite{Van93,EFJ90}. Let $\phi\in C_0^\infty(\R^N,[0,1])$ be a radial cut-off function such that $\phi(x)=1$ for $|x|\leq1$, $\phi(x)\in(0,1)$ for $1<|x|<2$, and $\phi(x)=0$ for $|x|\geq2$. Set
	\begin{equation}\label{function}
		\psi_\varepsilon(x)=\phi(x)U_{\varepsilon,0}(x).
	\end{equation}
	$\bullet$ By \cite{GDW94,He21}, when $N\geq 8$, as $\varepsilon\to0^+$, the following expansions hold:
	\begin{equation}\label{compu1}
		\aligned
			&\|\Delta \psi_\varepsilon\|_2^2=S^\frac N4+O(\varepsilon^{\frac{N-4}2})\,,\\
			&\|\nabla \psi_\varepsilon\|_2^2=C\varepsilon+O(\varepsilon^{\frac{N-4}2})\,,\\
			&\|\psi_\varepsilon\|_{2_*}^{2_*}=S^\frac N4+O(\varepsilon^\frac N2)\,,\\
		\endaligned
	\end{equation}
	and
	\begin{equation}\label{compu2}
	\|\psi_\varepsilon\|_2^2=\left\{
	  \begin{array}{ll}
	          C\varepsilon^2+ O(\varepsilon^{\frac{N-4}2})& \mbox{for}\,\, N>8\,,\\
	          -C\varepsilon^2\ln\varepsilon+O(\varepsilon^2), & \mbox{on}\,\, N=8\,.
	  \end{array}
	\right.
	\end{equation}
	Moreover,
	\begin{equation}\label{compu4}
		\aligned
			\|\psi_\varepsilon\|_p^p&\geq C\int_{|x|\leq 1}\frac{\varepsilon^{\frac{N-4}4p}}{(\varepsilon+|x|^2)^{\frac{(N-4)p}2}}\dd x= C\varepsilon^{\frac{N-4}4p-\frac{N-4}2p}\int_{|x|\leq1}\frac{\dd x}{(1+|\frac{x}{\sqrt{\varepsilon}}|^2)^{\frac{(N-4)p}2}}\\
			&=C\varepsilon^{\frac{4-N}4p+\frac N2}\int_0^1\frac{t^{N-1}}{(1+t^2)^{\frac{(N-4)p}2}}\dd t=C\varepsilon^{\frac{4-N}4p+\frac N2}.
		\endaligned
	\end{equation}
	Now we claim that $c_{min}<\frac2NS^\frac N4$ for any fixed $\eta>0$
	and $p\in\left(\frac{2(N-2)}{N-4},\frac{2N}{N-4}\right)$, or provided $\eta\in(\eta_0,+\infty)$ with some $\eta_0>0$ and $p\in\left(2,\frac{2(N-2)}{N-4}\right]$. Since $\psi_\varepsilon\in H^2(\R^N)\setminus\{0\}$, from \ref{max} that
	there exists $t_\varepsilon>0$ such that
	$$
	J(t_\varepsilon \psi_\varepsilon)=\max_{t>0}J(t\psi_\varepsilon)\quad\text{and}\quad t_\varepsilon\psi_\varepsilon\in\cN.
	$$
	Recalling \eqref{compu1} and \eqref{compu2}, it is easy to see that
	$\psi_\varepsilon$ is bounded in $H^2(\R^N)$ uniformly for $\varepsilon\in(0,1]$.
	Therefore, since $p\in(2,\frac{2N}{N-4})$ there exist constants $c_2>c_1>0$ independent of $\varepsilon$ such that
	\begin{equation}\label{compu6}
	J(t\psi_\varepsilon)< \frac2{N}S^\frac N4\quad\text{for}\,\, t<c_1\,\,\text{or}\,\, t>c_2.
	\end{equation}
	For $t\in[c_1,c_2]$ and $N\geq 8$, for any fixed $\eta>0$ one gets
	\begin{equation}\label{compu7_0}
		\aligned
			J(t\psi_\varepsilon)&\leq\max_{t>0}\left\{\left(\frac{t^2}2\int|\Delta \psi_\varepsilon|^2-\frac{t^{2_*}}{2_*}\int |\psi_\varepsilon|^{2_*}\right)+\frac{t^2}2\int(\gamma|\nabla \psi_\varepsilon|^2+\gamma_0\psi_\varepsilon^2)-\frac{t^p\eta}{p}\int|\psi_\varepsilon|^p\right\}\\
			&\leq\frac2N\frac{\|\Delta \psi_\varepsilon\|_2^\frac N2}
			{\|\psi_\varepsilon\|_{2_*}^\frac N2}+\frac{c_2^2}2\int(\gamma|\nabla\psi_\varepsilon|^2+\gamma_0\psi_\varepsilon^2)-\frac{c_1^p\eta}{p}\int|\psi_\varepsilon|^p
		\endaligned
	\end{equation}
	Then, by \eqref{compu1}, \eqref{compu2}, and \eqref{compu4}, for a sufficiently small $\varepsilon>0$,
	\begin{equation}\label{compu7}
		\begin{split}
			J(t\psi_\varepsilon)&\leq\frac2NS^\frac N4\frac{[1+O(\varepsilon^{\frac{N-4}2})]^{\frac N4}}{[1+O(\varepsilon^\frac N2)]^{\frac{(N-4)}4}}+\bigg(\frac{c_2^2\gamma C}2\varepsilon+\frac{c_2^2\gamma_0 C}2\varepsilon^{3/2}\bigg)-C\eta\, \varepsilon^{\frac{4-N}4p+\frac N2}\\
			&\leq\frac2NS^\frac N4[1+O(\varepsilon^{\frac{N(N-4)}8})]+C\varepsilon-C\eta\,\varepsilon^{\frac{4-N}4p+\frac N2}\\
			&<\frac2NS^\frac N4,
		\end{split}
	\end{equation}		
	which holds for all $\eta>0$ if $p\in\left(\frac{2(N-2)}{N-4},\frac{2N}{N-4}\right)$ or, for $\varepsilon$ small and fixed, if $\eta>\eta_0(\varepsilon)$ if $p\in\left(2,\frac{2(N-2)}{N-4}\right]$. In both cases, we conclude that
	\begin{equation}\label{c_min}
		c_{min}\leq\max_{t>0}J(t\psi_\varepsilon)<\frac2NS^\frac N4
	\end{equation}
	as desired.
	{Let us now focus on the remaining dimensions $N=5,6,7$, where we consider
	\begin{equation}\label{compu8}
	\phi(x)=\phi(|x|)\in C^\infty(\overline{B_1(0)},[0,1]), \,\,\phi(0)=1,\,\,\phi(1)=\phi'(1)=0.
	\end{equation}
	When $N=7$, in addition to \eqref{compu8}, the function $\phi$ is chosen so that
	$$
	\frac{|\phi'(t)|^2}{t^2}\leq C\,,\quad |\phi^2(t)-1|\leq Ct^{3+\delta}, \quad
	\big||\phi(t)|^{\frac{14}3}-1\big|\leq Ct^{3+\delta}
	$$
	for some fixed $\delta\in(0,1)$. Then the function $\psi_\varepsilon$ given in \eqref{function} satisfies
	\begin{equation}\label{compu9}
		\aligned
		&\|\Delta \psi_\varepsilon\|_2^2=S^\frac74+C\varepsilon^\frac32+O(\varepsilon^{\frac{3+\delta}2}),\\
		&\|\nabla\psi_\varepsilon\|_2^2=C\varepsilon^{\frac32}+O(\varepsilon^2),\\
		&\|\psi_\varepsilon\|_{2_*}^{2_*}=S^\frac74+O(\varepsilon^{\frac{3+\delta}2}),\\
		&\|\psi_\varepsilon\|_2^2=C \varepsilon^{\frac32}+O(\varepsilon^2).
		\endaligned
	\end{equation}
	as $\varepsilon\to0^+$, see \cite[Lemma 3.7]{He21}. Note that, in order to obtain the estimate from below \eqref{compu4}, it is sufficient that $\phi(x)>\delta>0$ in $B_{r_0}(0)$ for some $\delta,r_0>0$, which follows from continuity and $\phi(0)>0$. Indeed,
	\begin{equation}\label{compu4-1}
		\|\psi_\varepsilon\|_p^p\geq\delta^p\int_{B_{r_0}(0)}|U_{\varepsilon,0}(x)|^p\dd x=C\delta^p\int_{|x|\leq r_0}\frac{\varepsilon^{\frac{N-4}4p}}{(\varepsilon+|x|^2)^{\frac{(N-4)p}2}}\dd x=C\varepsilon^{\frac{4-N}4p+\frac N2}.
	\end{equation}
	as in \eqref{compu4}. Arguing as before, from \eqref{compu7_0}, \eqref{compu9}, and \eqref{compu4-1} we obtain
	\begin{equation*}
		J(t\psi_\varepsilon)\leq\frac27\frac{\left(S^\frac74(1+O(\varepsilon^{3/2}))\right)^{7/4}}{\left(S^\frac74(1+O(\varepsilon^\frac{3+\delta}2))\right)^{3/4}}+C\varepsilon^\frac32-C\eta\,\varepsilon^{\frac72-\frac34p},
	\end{equation*}
	from which again \eqref{c_min} holds, provided $p>\frac83$, or $p\leq\frac83$ and $\eta$ large as before.
	
	We proceed similarly for dimensions $N=5$ and $N=6$. In the former case, in addition to \eqref{compu8} the function $\phi$ must fulfill
	$$
	\frac{|\phi'(t)|^2}t\leq C\,,\quad |\phi^2(t)-1|\leq Ct^{2+\delta}, \quad\ \big|\phi^6(t)-1\big|\leq Ct^{2+\delta}
	$$
	for some fixed $\delta\in(0,1)$, so that we obtain the following expansions
	\begin{equation*}
		\aligned
		&\|\Delta \psi_\varepsilon\|_2^2=S^\frac32+C\varepsilon+O(\varepsilon^{\frac{2+\delta}2})\,,\\
		&\|\nabla\psi_\varepsilon\|_2^2=C\varepsilon+O(\varepsilon^\frac32),\\
		&\|\psi_\varepsilon\|_{2_*}^{2_*}=S^\frac32+O(\varepsilon^{\frac{2+\delta}2})\,,\\
		&\|\psi_\varepsilon\|_2^2=C\varepsilon+O(\varepsilon^\frac32)\,,
		\endaligned
	\end{equation*}
	as $\varepsilon\to0^+$, see \cite[Lemma 3.5]{He21}. In the latter case $N=5$, $\phi$ is chosen so that \eqref{compu8} and
	$$
	|\phi^2(t)-1|\leq Ct^{1+\delta}\,,\quad\big||\phi(t)|^{10}-1\big|\leq Ct^{1+\delta}
	$$
	for some fixed $\delta\in(0,1)$ hold, so that one gets
	\begin{equation*}
		\aligned
		&\|\Delta \psi_\varepsilon\|_2^2=S^\frac54+C\varepsilon^\frac12+O(\varepsilon^{\frac{1+\delta}2})\,,\\
		&\|\nabla \psi_\varepsilon\|_2^2=C\varepsilon^\frac12+O(\varepsilon)\,,\\
		&\|\psi_\varepsilon\|_{2_*}^{2_*}=S^\frac54+O(\varepsilon^{\frac{1+\delta}2})\,,\\
		&\|\psi_\varepsilon\|_2^2=C\varepsilon^\frac12+O(\varepsilon).
	\endaligned
	\end{equation*}
	as $\varepsilon\to0^+$, see \cite[Lemma 3.3]{He21}. In both cases, one concludes as before, under the conditions on $p$ and $\eta$ given in Theorem \ref{Thm-C-5-general}.
	
	Note that for all cases $N=5,6,7$, examples of functions $\phi$ with the above properties have been provided in Lemmas 3.4, 3.6, 3.8, of \cite{He21} respectively.}
\end{proof}

\begin{proof}[Proof of Theorem \ref{Thm-C-5-general}]
	In view of Proposition \ref{lem_conv_crit} and Lemma \ref{comp}, there exists a radial ground state solution $\bu\in H^2(\R^N)$ of equation \eqref{eq} with $J(\bu)=c_{min}$. As for the subcritical case, let us now first fix $R>0$ and consider the biharmonic Dirichlet problem in the ball $B_R$
	\begin{equation}\label{eq_R+C}
		\begin{cases}
			\Delta^2u-\gamma\Delta u+\gamma_0 u=g(u)+u_+^{2_*}&\;\mbox{in}\;B_R,\\
			u=\partial_\nu u=0&\;\mbox{on}\;\dB_R,
		\end{cases}
	\end{equation}
	with $\gamma\geq0$ and $\gamma_0>0$. The corresponding energy functional is
	$$
	J_R(u)=\frac12\iii{u}_R^2-\int_{B_R}\left(G(u)+\frac{u_+^{2_*}}{2_*}\right)
	$$
	restricted to $H^2_{0,\rad}(B_R)$. Since $p\in(2, 2_*)$, it is easy to see that $J_R(tu)\to-\infty$ for all $u\in H^2_0(B_R)$ fixed as $t\to+\infty$, and by Sobolev inequality, one has $J_R(u)>\delta_R>0$ on $S_\rho^R:=\{u\in H^2_{0,\rad}(B_R)\,|\,\iii{u}_R=\rho_R\}$ for some $\delta_R,\rho_R>0$. Hence we can define the mountain-pass level
	$$c_R:=\inf_{\gamma\in\Gamma_R}\max_{u\in\gamma([0,1])}J_R(u),$$
	where
	$$\Gamma_R:=\left\{\gamma\in C\left([0,1],H^2_{0,\rad}(B_R)\right)\,|\,\gamma(0)=0,\gamma(1)=v_0^R\right\}$$
	and $v_0^R\in H^2_{0,\rad}(B_R)$ is such that $J_R(v_0^R)<0$. Similarly to the subcritical case, $v_0^R$ and the constants $\delta_R$ and $\rho_R$ can be chosen independently of $R$, see Lemma \ref{MPunif}. Moreover, there exists a Palais-Smale sequence $(u_n)_n\subset H^2_{0,\rad}(B_R)$ such that $J_R(u_n)\rightarrow c_R$ and $J'_R(u_n)\rightarrow 0$ in $\left(H^2_{0,\rad}(B_R)\right)'$ as $n\to+\infty$. One can construct a special mountain-pass path by choosing $\psi_\varepsilon\in H^2_{0,\rad}(B_R)\setminus\{0\}$ defined in Lemma \ref{comp} for $R>0$ large, and set $\gamma(t):=tT\psi_\varepsilon$ for $T$ large enough and $t\in[0,1]$. Then it follows from \eqref{max} and the definition of $c_R$ that there exists $t_\varepsilon>0$ such that
	$$
	c_R\leq\max_{t\in[0,1]}J_R(\gamma(t))={\max_{t>0}J_R(tT\psi_\varepsilon)=J_R(t_\varepsilon T\psi_\varepsilon)}.
	$$
	As in Lemma \ref{comp}, we deduce that $c_R<\frac2NS^\frac N4$ uniformly for large $R>0$, and then as in Proposition \ref{lem_conv_crit} one obtains $u_n\rightarrow u_R$ in $H^2_{0,\rad}(B_R)$ with $J'_R(u_R)=0$ and $J_R(u_R)=c_R$, which implies that $u_R$ is a mountain-pass solution of equation \eqref{eq_R+C}. In order to find a solution of \eqref{eq}, let now $R\to+\infty$. In view of Corollary \ref{MPlevel_bdd}, the mountain-pass level map $R\mapsto c_R$ is decreasing and $0<\delta<c_R\leq c_1$ for all $R\geq1$. Hence, it is not hard to prove that the (extended) family of mountain-pass solutions $(\tu_R)_R$ is uniformly bounded in $H^2_{\rad}(\R^N)$. Furthermore, arguing again as in Proposition \ref{lem_conv_crit}, one infer the existence of $u_\infty\in H^2_{\rad}(\R^N)$ and a subsequence $(\tu_k)_k$ with $\tu_k:=\tu_{R_k}$ such that 
	\begin{equation}\label{conver}
		\tu_k\to u_\infty\ \ \mbox{in}\ \,H^2(\R^N)\quad\text{and}\quad c_{R_k}\to c_\infty\quad\text{as}\ \,k\to+\infty.
	\end{equation}
	with $c_\infty\geq\delta$. Thus, $u_\infty$ is a weak solution of \eqref{eq} with $J(u_\infty)=c_\infty$. Indeed,
	\begin{equation}
		c_\infty=\lim_{k\to+\infty}c_{R_k}=\lim_{k\to+\infty}J_{R_k}(u_k)=\lim_{k\to+\infty}J(\tu_k)=J(\uinfty),
	\end{equation}
	using the continuity of $J$ in $H^2(\R^N)$ and \eqref{conver}. Comparing
	the expanding-ball mountain-pass levels with the radial ground state $\bu$,
	exactly as in the last part of the proof of Theorem \ref{Thm2}, gives
	$J(\uinfty)=c_{min}$. Lemma \ref{lem:no-zero-spheres} therefore yields
	$\uinfty(x)>0$ for every $x\ne0$. If $\gamma^2\geq4\gamma_0$,
	Proposition \ref{prop:factorization-positive} yields positivity also at the
	origin.
\end{proof}

\section{The critical case for $\boldsymbol{N=4}\,$: Proof of Theorem \ref{Thm-C-4}}\label{Section_crit_N4}

The main strategy is similar to the one used to deal with the critical case in dimensions $N\geq5$. First, we note that the existence of a radial ground state solution for the problem \eqref{eq} in the whole space under conditions ($f_2$),($f_4$),($f_6$),($f_7$) has been proved in \cite{Sani_critical}. In fact, the author proves the existence of a radial mountain-pass solution but, provided ($f_5$) holds, it is standard to prove that the radial mountain-pass level coincides with the least energy level on the radial Nehari manifold. Note also that in \cite{Sani_critical} the author deal with the case $\gamma=0$, but it is not difficult to retrace the arguments therein in the general case $\gamma\geq0$.

As in Section \ref{Proof-S}, we need to focus now on the approximating Dirichlet problems, aiming at proving existence of radial ground state solutions and some compactness properties.
\vskip0.2truecm

We recall a technical estimate from \cite{Sani_subcritical}.
\begin{lem}\label{Lem_Sani-C}
	Let $\alpha>0$ and $q\geq2$. If $M>0$ and $\alpha M<32\pi^2$ then there exists a constant $C(\alpha,M,q)>0$ such that the inequality
	\begin{equation*}
		\int\left(\e^{\alpha u^2}-1\right)|u|^q\dd x\leq C(\alpha,M,q)\|u\|_{H^2(\R^4)}^q
	\end{equation*}
	holds for any $u\in H^2(\R^4)$ with $\|u\|_{H^2(\R^4)}\leq M$.
\end{lem}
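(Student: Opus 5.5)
The plan is to start from the elementary inequality of Lemma \ref{estimate_Sani} and then run a Hölder-plus-Adams argument. Fix $\alpha>0$, $q\geq2$ and $M>0$ with $\alpha M<32\pi^2$. Since the inequality is trivial for $u=0$, take $u\in H^2(\R^4)\setminus\{0\}$ with $\|u\|_{H^2(\R^4)}\leq M$. I will choose an exponent $r>1$ close to $1$ such that $r\alpha M<32\pi^2$; this is possible precisely because the inequality $\alpha M<32\pi^2$ is strict. Then Hölder's inequality with exponents $r$ and $r'$ gives
\[
\int\left(\e^{\alpha u^2}-1\right)|u|^q\leq\left(\int\left(\e^{\alpha u^2}-1\right)^r\right)^{\frac1r}\left(\int|u|^{qr'}\right)^{\frac1{r'}}.
\]

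The second factor is handled by the continuous Sobolev embedding $H^2(\R^4)\hookrightarrow L^s(\R^4)$ for every $s\in[2,\infty)$. Since $qr'\geq2$, we get $\|u\|_{qr'}^q\leq C_{q,r}\|u\|_{H^2(\R^4)}^q$, which already supplies the desired power $q$ of the norm. For the first factor, Lemma \ref{estimate_Sani} gives $(\e^{\alpha u^2}-1)^r\leq \e^{\alpha r u^2}-1$, so it suffices to bound $\int(\e^{\alpha r u^2}-1)$ uniformly over the ball of radius $M$.

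To get that uniform bound, write $u=\|u\|_{H^2}\,v$ with $\|v\|_{H^2}=1$. Then
\[
\alpha r u^2=\alpha r\|u\|_{H^2}^2v^2\leq \alpha r M v^2.
\]
Here I am reading the hypothesis $\|u\|_{H^2}\leq M$ as the condition that matters in the exponent, namely $\alpha\|u\|_{H^2}^2\leq\alpha M$, which is the form in which it appears in \cite{Sani_subcritical}. If the norm bound is instead meant literally as $\|u\|_{H^2}\leq M$, the exponent is $\alpha r M^2$ and one needs $\alpha M^2<32\pi^2$; I would then adjust $r$ accordingly, with no change to the argument. Since $\alpha rM\leq32\pi^2$, the Ruf--Sani form of Adams' inequality \eqref{Adams_RS} gives
\[
\int(\e^{\alpha r u^2}-1)\leq\int(\e^{32\pi^2 v^2}-1)\leq C.
\]
Collecting the two factors yields the claim with $C(\alpha,M,q)=C^{1/r}C_{q,r}$.

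The main obstacle is getting the exponent bookkeeping right, that is, matching the scaling so that Adams' sharp constant $32\pi^2$ is not exceeded once the factor $r>1$ from Hölder is introduced. Everything else is routine, and the strict inequality in the hypothesis is exactly the room needed to choose $r$.
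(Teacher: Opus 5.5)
Your argument is correct and is the standard proof of this estimate, which the paper does not prove but simply quotes from \cite{Sani_subcritical}: H\"older with an exponent $r>1$ chosen via the strict inequality, Lemma \ref{estimate_Sani} plus normalization and the Ruf--Sani form \eqref{Adams_RS} of Adams' inequality for the exponential factor, and the embedding $H^2(\R^4)\hookrightarrow L^{qr'}(\R^4)$ for the power factor. Your reinterpretation of the hypothesis is right and in fact forced: read literally ($\|u\|_{H^2(\R^4)}\le M$ with only $\alpha M<32\pi^2$) the statement is false when $M>1$ and $\alpha M^2>32\pi^2$ (scale Adams' concentrating functions to norm $M$), whereas the paper's own application in Lemma \ref{MP_lem-C}, with $\rho<\pi\sqrt{32/(\alpha C_{eq})}$, uses exactly $\alpha M^2<32\pi^2$ for $M=\sqrt{C_{eq}}\,\rho$.
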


For $R>0$ the radial mountain-pass level $c_R$ is defined as in \eqref{MPlevel}.
\begin{lem}\label{MP_lem-C}
	For all $R\geq1$ there exists a mountain-pass solution $u_R\in H^2_{0,\rad}(B_R)$ of problem \eqref{eq_R} at level $c_R$. Moreover, the map $R\mapsto c_R$ is monotone decreasing, and there exists $\delta>0$ such that $c_R>\delta$ for all $R\geq1$.
\end{lem}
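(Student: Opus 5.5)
The plan has two parts: the uniform-in-$R$ mountain-pass geometry, which is already available, and compactness for fixed $R$, which is the real work because $f$ now has critical exponential growth.

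\textbf{Geometry and monotonicity.} Lemma \ref{MPunif} never used subcriticality of $f$ in the case $N=4$. Indeed, \eqref{f_above_N=4} is replaced by \eqref{f-C-above} with a fixed $\alpha>\alpha_0$, together with Lemma \ref{estimate_Sani} and Adams' inequality \eqref{Adams_RS-tau} applied on a small sphere $\iii{\tu}=\rho$ (take $\alpha t\rho^2C_{eq}\le 32\pi^2$). This still gives $J_R>\delta$ on $S_\rho$, with $\delta,\rho$ independent of $R\ge1$. The element $v_0=te_0\in C^\infty_{0,\rad}(B_1)$ with $J_R(v_0)<0$ is built exactly as before from ($f_4$). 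Hence $c_R$ is well defined and $c_R>\delta$. Monotonicity follows as in Corollary \ref{MPlevel_bdd}, since $\Gamma_{R_1}\subset\Gamma_{R_2}$ for $R_1<R_2$ via zero extension.

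\textbf{Level estimate.} By Ekeland or the mountain-pass theorem there is a Palais--Smale sequence $(u_n)\subset H^2_{0,\rad}(B_R)$ at level $c_R$. By ($f_4$) it is bounded, with $\limsup\iii{u_n}_R^2\le\frac{2\mu}{\mu-2}c_R$. To get compactness I first need the sharp level bound
\[
c_R<\frac{16\pi^2}{\alpha_0}
\]
in the norm $\iii{\cdot}$ (equivalently, $\|\cdot\|_{H^2,\tau}$ up to constants). I will obtain it as in \cite{Sani_critical} by testing along $t\mapsto tM_k$, where $M_k$ are the radial Adams--Moser concentrating functions supported in $B_1\subset B_R$. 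Assumption ($f_7$) with $\beta_0>0$ gives $\max_t J_R(tM_k)<16\pi^2/\alpha_0$ for $k$ large, and this is uniform in $R\ge1$ because $M_k$ lives in $B_1$. Equivalently, since $c_R\le c_1$, it suffices to prove the bound for $R=1$, and \cite{Sani_critical} already supplies that estimate \eqref{c_mp_Sani}.

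\textbf{Compactness for fixed $R$ (main obstacle).} This is the step I expect to be hardest. With the level bound, $\limsup\iii{u_n}^2<\frac{2\mu}{\mu-2}\cdot\frac{16\pi^2}{\alpha_0}$ only, which is not directly below the Adams threshold. So I will follow the de Figueiredo--Miyagaki--Ruf argument. Extract $u_n\rightharpoonup u_R$ weakly in $H^2_0(B_R)$. Using ($f_6$) and \cite[Lemma 2.1]{dFMR}, $f(u_n)\to f(u_R)$ in $L^1(B_R)$ and $F(u_n)\to F(u_R)$ in $L^1(B_R)$, since $B_R$ is bounded. Hence $u_R$ is a critical point of $J_R$. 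Next, $u_R\not\equiv0$: otherwise $\int F(u_n)\to0$, so $\iii{u_n}^2\to 2c_R<32\pi^2/\alpha_0$. Then Adams' inequality with exponent $\alpha$ slightly above $\alpha_0$ and some $r>1$ bounds $f(u_n)$ in $L^r$, which forces $\int f(u_n)u_n\to0$ and $\iii{u_n}\to0$, contradicting $c_R>\delta$.

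To identify the level, I will use Lions' concentration-compactness improvement of Adams' inequality together with the estimate $J_R(u_R)\ge0$, which follows from ($f_4$) because $J_R(u_R)=J_R(u_R)-\frac12J_R'(u_R)u_R\ge0$. Suppose $J_R(u_R)<c_R$. Writing $\iii{u_n}^2\to 2(c_R+\int F(u_R))$, one obtains
\[
\limsup_n\,\iii{u_n}^2\big(1-\iii{u_R}^2/\lim\iii{u_n}^2\big)<\frac{32\pi^2}{\alpha_0}.
\]
Lions' lemma then bounds $f(u_n)u_n$ in some $L^r$ with $r>1$, hence $\int f(u_n)u_n\to\int f(u_R)u_R$ and $\iii{u_n}\to\iii{u_R}$. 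This contradicts $J_R(u_R)<c_R$. Therefore $J_R(u_R)=c_R$ and $u_R$ is a mountain-pass solution, which completes the proof of the lemma.
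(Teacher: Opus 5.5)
Your first paragraph is correct, and it is in substance the paper's entire proof. The paper only redoes the lower bound of $J_R$ on a small sphere, via Lemma \ref{Lem_Sani-C} under the smallness condition $\alpha C_{eq}\rho^2<32\pi^2$. That lemma packages exactly your H\"older, Lemma \ref{estimate_Sani} and Adams step, including the smallness of $\rho$ needed to apply Adams' inequality. The negative point, the bound $c_R>\delta$ and monotonicity are taken from Lemma \ref{MPunif} and Corollary \ref{MPlevel_bdd}; existence of a critical point at level $c_R$ is left implicit. Your fixed-$R$ compactness scheme is the standard one:
\begin{itemize}
\item bounded Palais--Smale sequences via ($f_4$);
\item $L^1$ convergence of $f(u_n)$ and $F(u_n)$ via \cite[Lemma 2.1]{dFMR} and ($f_6$);
\item nontriviality of the weak limit;
\item identification of the level through the Lions-type improvement of Adams' inequality together with $J_R(u_R)\geq0$.
\end{itemize}
This scheme is sound \emph{once} $c_R<16\pi^2/\alpha_0$ is known.

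That level bound is where the proposal breaks for general $R\geq1$. Test with Moser functions supported in a ball of fixed radius $d$, here $B_1$. The de Figueiredo--Miyagaki--Ruf contradiction argument then only works if $\beta_0$ exceeds a threshold depending on $d$. This is the analogue of $\beta_0>2/(d^2\alpha_0)$ in \cite{dFMR}, further affected by the lower-order part $\gamma\|\nabla\cdot\|_2^2+\gamma_0\|\cdot\|_2^2$ of $\iii{\cdot}$. For small $\beta_0$ the argument yields an upper bound on $\beta_0$ rather than a contradiction. Assumption ($f_7$) provides only some $\beta_0>0$, so nothing gives $\max_tJ_1(tM_k)<16\pi^2/\alpha_0$.

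This is why the paper, in the proof of Lemma \ref{Lem_ufu0}, gets $c_R<16\pi^2/\alpha_0$ only for $R\geq R_0$, with $R_0$ tied to the support of the Moser sequence of \cite[Lemma 8]{Sani_critical}. Note also that \eqref{c_mp_Sani} is a bound on $c_\infty$, not on $c_1$. So your reduction ``it suffices to prove the bound for $R=1$'' points the wrong way: monotonicity would propagate a bound at $R=1$ to all larger $R$, but no such bound is available.

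As it stands, your argument yields a mountain-pass solution at level $c_R$ only for $R\geq R_0$. For $1\leq R<R_0$ the level may lie at or above the Adams threshold, and compactness is unproved. This does not affect Theorem \ref{Thm-C-4}, which only uses $R\to+\infty$. Still, the claim for every $R\geq1$ is not established by your proposal, and the paper's proof does not address it either.
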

\begin{proof}
	The only difference in the proof with respect to the subcritical case (Lemma \ref{MPunif}) relies in the estimate from below of the functional $J_R$ restricted to sufficiently small balls around $0\in H^2_{0,\rad}(\R^4)$. It is hence sufficient to estimate $\int_{B_R}F(u)$ by means of Lemma \ref{Lem_Sani-C}. In fact, for $\iii u_R=\rho>0$ and choosing $q>2$, $\alpha>\alpha_0$, and $\varepsilon>0$, by \eqref{f_above_N=4} one has
	$$\int_{B_R}F(u)\dd x\leq\varepsilon\|\tu\|_2^2+C(\alpha,q,\varepsilon)\int|\tu|^q\left(\e^{\alpha\tu^2}-1\right)\dd x.$$
	If $\rho<\pi\sqrt{\frac{32}{\alpha C_{eq}}}$ one may apply Lemma \ref{Lem_Sani-C} and obtain
	$$\int|\tu|^q\left(\e^{\alpha\tu^2}-1\right)\dd x\leq C\|\tu\|_{H^2(\R^4)}^q\leq CC_{eq}^q\iii\tu^q,$$
	so that
	$$J_R(u)=\frac12\iii u_R^2-\int_{B_R}F(u)\dd x\geq\left(\frac12-C\varepsilon\right)\iii\tu^2-\widetilde C(\varepsilon)\iii\tu^q>\delta$$
	for some $\delta>0$, provided $\varepsilon$ and $\iii\tu=\rho$ are small enough.
\end{proof}

The next aim is to obtain compactness for the solution family $(u_R)_R$ and convergence to a solution for the equation in the whole space. Some steps of the proof are reminiscent of \cite[Sections 4 and 6]{Sani_critical} and we will use the so-called \textit{radial lemma}, see \cite[Lemma A.II]{BL}.
\begin{lem}\label{radial_lemma}
	For any $u\in H^1_{\rad}(\R^4)$ one has
	\begin{equation}
		|u(x)|\leq\frac1{\sqrt{2\pi^2}}\frac{\|u\|_{H^1(\R^4)}}{|x|^\frac32}\qquad\mbox{a.e. in}\ \,\R^4.
	\end{equation}
\end{lem}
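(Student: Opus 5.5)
We prove Lemma \ref{radial_lemma} by the classical Strauss argument, first for smooth radial functions with compact support and then by density. For $u\in C^\infty_{0,\rad}(\R^4)$ we write $u(x)=w(|x|)$, where $w$ is smooth on $[0,\infty)$ and vanishes for large $r$. The fundamental theorem of calculus, applied to $r^3w(r)^2$ on $[r,\infty)$, gives
\[
r^3w(r)^2=-\int_r^\infty\frac{\dd}{\dd s}\big(s^3w(s)^2\big)\dd s=-\int_r^\infty\big(3s^2w^2+2s^3ww'\big)\dd s\leq 2\int_r^\infty s^3|w||w'|\dd s .
\]
The inequality holds because the term $3s^2w^2$ is nonnegative and can be dropped.

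Next we apply Cauchy--Schwarz and then $2ab\leq a^2+b^2$. The surface area of $S^3$ is $|S^3|=2\pi^2$, so $\int_0^\infty s^3 g(s)\dd s=\frac1{2\pi^2}\int_{\R^4}g(|x|)\dd x$ for radial integrands. Therefore
\[
r^3w(r)^2\leq 2\Big(\int_0^\infty s^3w^2\Big)^{\frac12}\Big(\int_0^\infty s^3|w'|^2\Big)^{\frac12}
\leq\frac{1}{2\pi^2}\big(\|u\|_2^2+\|\nabla u\|_2^2\big)=\frac{\|u\|_{H^1(\R^4)}^2}{2\pi^2}.
\]
Taking square roots gives $|u(x)|\leq\frac{1}{\sqrt{2\pi^2}}\,\|u\|_{H^1(\R^4)}\,|x|^{-3/2}$ for all $x\neq0$. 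This is exactly the claimed constant.

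For a general $u\in H^1_{\rad}(\R^4)$, we first obtain radial approximants. We approximate $u$ in $H^1$ by $C^\infty_0$ functions and average each approximant over $O(4)$. This averaging produces radial $u_n\in C^\infty_{0,\rad}(\R^4)$ with $u_n\to u$ in $H^1$, since it is a contraction commuting with the $H^1$ norm. Passing to a subsequence, we may assume $u_n\to u$ a.e.

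We then take the a.e. limit in the pointwise inequality for $u_n$. The right-hand side converges because $\|u_n\|_{H^1}\to\|u\|_{H^1}$. This yields the inequality a.e. in $\R^4$.

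The only point needing care is this density step, in particular that the symmetrized approximants are radial and still converge in $H^1$. It is standard: the $O(4)$-averaging operator is a linear contraction on $H^1$ that fixes $u$.
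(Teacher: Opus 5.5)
Your proof is correct and is essentially the classical Strauss argument behind the Berestycki--Lions radial lemma, which the paper does not prove but simply cites (\cite[Lemma A.II]{BL}); the constant $1/\sqrt{2\pi^2}=|S^3|^{-1/2}$ comes out of Cauchy--Schwarz and $2ab\le a^2+b^2$ exactly as you compute. Your density step is also sound, though mollifying with a radial kernel and multiplying by a radial cutoff would give the radial approximants more directly than $O(4)$-averaging.
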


\begin{lem}\label{exists_uinfty-C}
	The (extended) family of mountain-pass solutions $(\tu_R)_R$ is uniformly bounded in $H^2_{0,\rad}(\R^4)$. Moreover, there exists $u_\infty\in H^2_{\rad}(\R^4)$ such that, up to a subsequence, $\tu_R\rightharpoonup u_\infty$ and $\uinfty$ is a radial nonnegative weak solution of \eqref{eq}.
\end{lem}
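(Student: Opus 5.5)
The plan follows Proposition \ref{exists_uinfty}, with Adams' inequality replaced by a sharper version adapted to critical growth. The first step is the uniform bound. Since $u_R$ is a critical point of $J_R$ at level $c_R$, the Ambrosetti--Rabinowitz condition ($f_4$) gives
\[
c_R=J_R(u_R)-\tfrac1\mu J_R'(u_R)u_R\geq\left(\tfrac12-\tfrac1\mu\right)\iii{u_R}_R^2 .
\]
Lemma \ref{MP_lem-C} gives $c_R\leq c_1$, so $\iii{\tu_R}^2\leq\frac{2\mu}{\mu-2}c_1$. By reflexivity and the compact radial embeddings of \cite{Lions}, I extract $R_k\to\infty$ with $\tu_k\rightharpoonup\uinfty$ in $H^2_{\rad}(\R^4)$, $\tu_k\to\uinfty$ in $L^s$ for all $s>2$, and $\tu_k\to\uinfty$ a.e. Nonnegativity then passes to the limit a.e. I use the scaling argument of the proof of Theorem \ref{Thm2}: by Lemma \ref{Pos_CT}, $u_R>0$ in $B_R$ for $R\geq R_0$. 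The convergence $\Delta u_k\Delta\varphi+\gamma\nabla u_k\nabla\varphi+\gamma_0u_k\varphi$ for $\varphi\in C_0^\infty(\R^4)$ follows from weak convergence, as in \eqref{conv_1-S}.

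The main obstacle is the nonlinear term $\int f(\tu_k)\varphi\to\int f(\uinfty)\varphi$. Here $f$ has critical exponential growth and the uniform bound on $\iii{\tu_k}$ need not lie below the Adams threshold, so the choice of a small $\alpha$ used in Proposition \ref{exists_uinfty} is unavailable. I plan to avoid exponential integrability by using only an $L^1$ bound together with the de Figueiredo--Miyagaki--Ruf lemma \cite[Lemma 2.1]{dFMR}. Testing the equation with $u_k$ gives
\[
\int f(\tu_k)\tu_k=\iii{u_k}_{R_k}^2\leq C
\]
uniformly in $k$. By ($f_1$) and ($f_6$), $f\geq0$ and $F\leq M_0f$ for large $t$, and this bound also controls $\int_{\supp\varphi}f(\tu_k)$. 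On $\{\tu_k\leq1\}$ I use continuity of $f$. On $\{\tu_k>1\}$ I use $f(\tu_k)\leq f(\tu_k)\tu_k$. Hence $(f(\tu_k))_k$ is bounded in $L^1(\supp\varphi)$ and $(f(\tu_k)\tu_k)_k$ is bounded in $L^1$. Since $\tu_k\to\uinfty$ a.e. and $f$ is continuous, \cite[Lemma 2.1]{dFMR} yields $f(\tu_k)\to f(\uinfty)$ in $L^1_{\rm loc}(\R^4)$. Fatou's lemma also gives $f(\uinfty)\uinfty\in L^1(\R^4)$.

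To avoid any difficulty near $\partial B_{R_k}$, I take $k$ large enough that $\supp\varphi\subset B_{R_k}$. Then $\varphi$ is an admissible test function for \eqref{eq_R}, and passing to the limit gives the weak form of \eqref{eq} for $\uinfty$. Radiality is preserved by weak limits in $H^2_{\rad}$. The radial lemma, Lemma \ref{radial_lemma}, is kept in reserve: if the local $L^1$ bound near the origin were an issue, I would use it to control $\tu_k$ uniformly away from the origin. I do not expect it to be needed for this statement.
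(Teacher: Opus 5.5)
Your proposal is correct and follows essentially the paper's proof: the uniform bound comes from ($f_4$) and $c_R\le c_1$; you use the radial compact embeddings, the positivity of $u_R$ for large $R$ via Lemma \ref{Pos_CT}; and you apply \cite[Lemma 2.1]{dFMR} with the bound $\int f(\tu_k)\tu_k=\iii{u_k}_{R_k}^2\leq C$, obtained by testing the equation with $u_k$. The differences are only cosmetic. You bound $\int_{\supp\varphi}f(\tu_k)$ by splitting at $\tu_k=1$, whereas the paper reads it off the equation; also, the hypothesis of \cite[Lemma 2.1]{dFMR} is $L^1$ convergence on bounded sets, which you already have from the $L^s$ convergence (not merely from a.e. convergence).
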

\begin{proof}
	As in the subcritical case (Proposition \ref{exists_uinfty}), using the uniform boundedness of $(c_R)_R$ we infer that the solution family $(\tu_R)_R$ is uniformly bounded in $H^2(\R^4)$. Hence, there exists a subsequence denoted by $(\tu_k)_k$ and $\uinfty\in H^2_{\rad}(\R^4)$ such that $\tu_k\rightharpoonup\uinfty$ in $H^2(\R^4)$ and, by compact embedding, $\tu_k\to\uinfty$ in $L^p(\R^4)$ for $p\in(2,+\infty)$ and a.e. in $\R^4$. Let now $\varphi\in C^\infty_0(\R^4)$; for $k$ large enough $B_{R_K}(0)\supset\supp\,\varphi$ and hence \eqref{conv_1-S} holds. In order to show that $u_\infty$ is a weak solution for \eqref{eq} it is sufficient to prove that
	\begin{equation}\label{f_conv_loc}
		\int f(u_k)\varphi\to\int f(\uinfty)\varphi
	\end{equation}
	as $k\to+\infty$. Note also that as in the subcritical case, by Lemma \ref{Pos_CT} one has $u_k>0$ for $k$ large enough. First, we have that $\int f(u_k)\varphi$ is bounded uniformly in $k$ by means of \eqref{conv_1-S} and the fact that $u_k$ solves \eqref{eq_R} in $B_{R_k}$. Since $\uinfty\in H^2(\R^4)$ and $J$ is well-defined on that space, we have also $\int f(\uinfty)\varphi<+\infty$. Finally, since $u_k$ is a critical point of $J_{R_k}$, one has
	$$\int |f(u_k)u_k\varphi|=\int_{\supp\varphi}f(u_k)u_k|\varphi|\leq\|\varphi\|_\infty\int_{B_{R_k}}f(u_k)u_k=\|\varphi\|_\infty\iii{u_k}_{R_k}^2\leq C.$$
	Hence, by \cite[Lemma 2.1]{dFMR} we get $\int f(\tu_k)\varphi\to\int f(\uinfty)\varphi$ and conclude that $\uinfty$ is a nonnegative radial solution of \eqref{eq}.
\end{proof}

Next we prove that $\uinfty$ is nontrivial, by showing that its energy is positive
\begin{lem}\label{Lem_F_conv}
	We have
	\begin{equation}\label{F_conv}
		\int F(\tu_k)\dd x\to\int F(\uinfty)\dd x.
	\end{equation}
\end{lem}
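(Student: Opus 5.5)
The plan is to split the integral of $F(\tu_k)$ into a bounded region $B_{\bR}$ and its exterior, in the spirit of \cite[Sections 4 and 6]{Sani_critical}. On the exterior, the radial lemma gives a uniform $L^\infty$ bound there, which makes $F$ quadratic and hence small. On $B_{\bR}$, the critical exponential growth is handled by a Strauss/de Figueiredo--Miyagaki--Ruf type convergence lemma, using the uniform bound on $\int f(u_k)u_k$ together with ($f_6$).

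\textbf{Exterior region.} By Lemma \ref{exists_uinfty-C}, $\iii{\tu_k}^2=\int f(\tu_k)\tu_k\le C$ uniformly in $k$. Hence $\|\tu_k\|_{H^1(\R^4)}\le C$ and the same holds for $\uinfty$. By Lemma \ref{radial_lemma}, $|\tu_k(x)|\le C|x|^{-3/2}$, so for $|x|\ge\bR$ the functions $\tu_k$ and $\uinfty$ are bounded by $C\bR^{-3/2}\le1$ once $\bR$ is large. By ($f_2$) and continuity, for each $\varepsilon>0$ we have $F(t)\le\varepsilon t^2$ for $|t|\le\delta_\varepsilon$. Choosing $\bR$ with $C\bR^{-3/2}\le\delta_\varepsilon$, we get
\[
\int_{\R^4\setminus B_{\bR}}F(\tu_k)+\int_{\R^4\setminus B_{\bR}}F(\uinfty)\le\varepsilon\bigl(\|\tu_k\|_2^2+\|\uinfty\|_2^2\bigr)\le C\varepsilon ,
\]
uniformly in $k$.

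\textbf{Bounded region.} On $B_{\bR}$ we need $\int_{B_{\bR}}F(\tu_k)\to\int_{B_{\bR}}F(\uinfty)$. We have $\tu_k\to\uinfty$ a.e., and $F$ is continuous, so the task is uniform integrability of $F(\tu_k)$ on $B_{\bR}$. Fix $M>t_0$.
\begin{itemize}
\item On $\{|\tu_k|\le M\}$, $F(\tu_k)$ is bounded, so dominated convergence on the bounded set applies.
\item On $\{|\tu_k|>M\}$, ($f_6$) gives $F(\tu_k)\le M_0 f(\tu_k)\le \frac{M_0}{M}f(\tu_k)\tu_k$, hence
\[
\int_{\{|\tu_k|>M\}\cap B_{\bR}}F(\tu_k)\le\frac{M_0}{M}\int f(\tu_k)\tu_k\le\frac{M_0C}{M}.
\]
\end{itemize}
The same bound holds for $\uinfty$, using Fatou on $\int f(\uinfty)\uinfty$. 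Letting $k\to\infty$ for fixed $M$, then $M\to\infty$, then $\varepsilon\to0$, proves \eqref{F_conv}. Equivalently, one can invoke \cite[Lemma 2.1]{dFMR} directly, with $\varphi\equiv1$ on the ball, since $f(\tu_k)\tu_k$ is bounded in $L^1$.

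\textbf{Main obstacle.} The delicate point is that we have no subcritical Adams control: $\iii{\tu_k}$ may not be small enough to place $\e^{\alpha\tu_k^2}$ uniformly in $L^r$. This is why I avoid Adams' inequality altogether and rely only on the Nehari identity $\int f(u_k)u_k=\iii{u_k}^2_{R_k}\le C$ together with ($f_6$). One must check carefully that $f(t)\ge0$ and $tf(t)\ge0$ for all $t$ (true by ($f_1$)), so that the truncation estimate is legitimate without absolute values. The exterior estimate needs only the $H^1$ bound, which follows from the equivalence of $\iii{\cdot}$ with the $H^2$ norm via $C_{eq}$.
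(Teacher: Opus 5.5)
Your proof is correct and follows the paper's architecture: split $\int F(\tu_k)$ into $B_{\bR}$ and its complement, handle the ball with ($f_6$) plus the Nehari bound on $\int f(\tu_k)\tu_k$, and handle the exterior with the radial lemma. On the ball the paper just cites \cite[(31)--(32)]{Sani_critical}, and that argument is the one you wrote out, namely $F(\tu_k)\le \frac{M_0}{M}f(\tu_k)\tu_k$ on $\{\tu_k>M\}$.

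The real difference is on $\R^4\setminus B_{\bR}$. The paper keeps the exponential term from \eqref{F-C-above}, expands $\e^{\alpha t^2}-1$ as a power series, and sums the radial-lemma bounds on each $\int_{|x|>\bR}|\tu_k|^{2j+1}$. This gives a tail bound that is uniform in $k$ and vanishes as $\bR\to\infty$. You instead use the pointwise decay $|\tu_k(x)|\le C|x|^{-3/2}$ to make $\tu_k$ and $\uinfty$ uniformly small outside a large ball. After that you only need $F(t)=o(t^2)$ from ($f_2$). Your route is shorter, needs no information about the growth of $f$ at infinity, and avoids the series bookkeeping (which in the paper also contains a couple of slips in the exponents).

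Two small points of phrasing:
\begin{itemize}
\item The set $\{|\tu_k|\le M\}$ depends on $k$, so ``dominated convergence on the bounded set'' is not literally applicable. Close the ball step with Vitali's theorem instead: your estimates show exactly that $F(\tu_k)$ is uniformly integrable on $B_{\bR}$, and you have a.e. convergence on a set of finite measure.
\item \cite[Lemma 2.1]{dFMR} gives $L^1(B_{\bR})$ convergence of $f(\tu_k)$, not of $F(\tu_k)$. Calling it ``equivalent'' still needs one more step: use ($f_6$) to dominate $F(\tu_k)$ by $M_0 f(\tu_k)+\max_{[0,t_0]}F$, then apply a generalized dominated convergence argument.
\end{itemize}
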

\begin{proof}
	Using ($f_6$), as in \cite[(31)-(32)]{Sani_critical} one proves that for all $\bR>0$
	\begin{equation}\label{F_loc-C}
		\int_{B_\bR}F(\tu_k)\dd x\to\int_{B_\bR}F(\uinfty)\dd x.
	\end{equation}
	Moreover, for $\varepsilon>0$ and $\alpha>\alpha_0$, \eqref{F-C-above} implies
	\begin{equation}\label{F_ext-1-C}
		\int_{\R^4\setminus B_\bR}F(\tu_k)\dd x\leq\varepsilon\|\tu_k\|_{L^2(\R^4\setminus B_\bR)}^2+C(\alpha,\varepsilon)\int_{\R^4\setminus B_\bR}\tu_k\left(\e^{\alpha \tu_k^2}-1\right)\dd x.
	\end{equation}
	We estimate the second term by means of the power series expansion of the exponential function, together with Lemma \ref{radial_lemma}:
	\begin{equation*}
		\begin{split}
			\int_{\R^4\setminus B_\bR}\tu_k\left(\e^{\alpha\tu_k^2}-1\right)\dd x&=\sum_{j=1}^{+\infty}\frac{\alpha^j}{j!}\int_{\R^4\setminus B_\bR}\tu_k^{2j+1}\dd x\\
			&\leq2\pi^2\sum_{j=1}^{+\infty}\frac{\alpha^j}{j!}\left(\frac1{2\pi^2}\right)^{j+\frac12}\|\tu_k\|_{H^1(\R^4)}^{2j+1}\frac{\bR^{\frac52-3j}}{3j-\frac52}\\
			&\leq\frac{\sqrt{2\pi^2}}{\sqrt\bR}\|\tu_k\|_{H^1(\R^4)}\sum_{j=1}^{+\infty}\frac1{j!}\left(\frac{\alpha\|\tu_k\|_{H^1(\R^4)}^2}{2\pi^2}\right)^j\\
			&\leq\frac{\sqrt{2\pi^2}}{\sqrt\bR}\|\tu_k\|_{H^1(\R^4)}\e^{\frac\alpha{2\pi^2}\|\tu_k\|_{H^1(\R^4)}}.
		\end{split}
	\end{equation*}
	Therefore, from \eqref{F_ext-1-C}, since $\|\tu_k\|^2_{H^1(\R^4)}\leq C_{eq}\iii{\tu_k}^2\leq C$, one infers
	\begin{equation}\label{F_ext-C}
		\int_{\R^4\setminus B_\bR}F(\tu_k)\dd x\leq C\varepsilon+\frac{C(\alpha,\varepsilon)}\bR.
	\end{equation}
	Analogously one also obtains
	\begin{equation}\label{F_ext-infty-C}
		\int_{\R^4\setminus B_\bR}F(\uinfty)\dd x\leq C\varepsilon+\frac{C(\alpha,\varepsilon)}\bR.
	\end{equation}
	Since $\varepsilon>0$ and $\bR$ are arbitrary, \eqref{F_conv} holds combining \eqref{F_loc-C}-\eqref{F_ext-infty-C}.
\end{proof}

\begin{lem}\label{Lem_ufu0}
	The weak solution $\uinfty$ is nontrivial and nonnegative in $\R^4$.
\end{lem}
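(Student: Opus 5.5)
Nonnegativity is the easier half, and it goes exactly as in the subcritical case. I will rescale $u_k$ to the unit ball, as in \eqref{eq_1_v}. The coefficients become $R_k^2\gamma$ and $R_k^4\gamma_0$, and their sum exceeds $\mu_0(B_1)$ once $R_k$ is large, so Lemma \ref{Pos_CT} gives $u_k>0$ in $B_{R_k}$ for all large $k$; this was already noted in the proof of Lemma \ref{exists_uinfty-C}. Since $\tu_k\to\uinfty$ a.e. (by the compact embedding into $L^p$, $p>2$, along a further subsequence), it follows that $\uinfty\geq0$ a.e. The continuous representative, provided by elliptic regularity, is then nonnegative everywhere.

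For nontriviality I will argue by contradiction: suppose $\uinfty\equiv0$. Lemma \ref{Lem_F_conv} then gives $\int F(\tu_k)\to0$. Because $c_{R_k}=J_{R_k}(u_k)=\frac12\iii{\tu_k}^2-\int F(\tu_k)$, we get $\iii{\tu_k}^2\to2c_\infty$, where $c_\infty:=\lim c_{R_k}\geq\delta>0$ by Lemma \ref{MP_lem-C}. To reach a contradiction I want to show that $\int f(\tu_k)\tu_k\to0$. Together with the Nehari identity $\iii{\tu_k}^2=\int f(\tu_k)\tu_k$, this would force $\iii{\tu_k}\to0$ and hence $c_\infty=0$, which is impossible.

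Proving $\int f(\tu_k)\tu_k\to0$ is the main obstacle, since Adams' inequality only helps below the threshold. The key input is the sharp upper bound for the mountain-pass level taken from \cite{Sani_critical}, namely $c_R<\frac{16\pi^2}{\alpha_0}$ uniformly for large $R$. I obtain it by comparing with the whole-space path through the concentrating Adams functions, exactly as in \eqref{c_mp_Sani}, using ($f_7$). This bound gives $\limsup\iii{\tu_k}^2=2c_\infty<\frac{32\pi^2}{\alpha_0}$. I then choose $\alpha>\alpha_0$ close to $\alpha_0$ and $r>1$ close to $1$ so that $\alpha r\iii{\tu_k}^2\leq32\pi^2$ for large $k$. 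Here I must handle the passage from $\|\cdot\|_{H^2}$ to $\iii\cdot$: I will use the $\tau$-norm version \eqref{Adams_RS-tau}, or argue on $\|\Delta\cdot\|_2$ as in Sani's work, so that the constant $C_{eq}$ does not spoil the threshold. With this choice, \eqref{f_above_N=4}, Hölder's inequality and Lemma \ref{estimate_Sani} give
\[
\int f(\tu_k)\tu_k\leq\varepsilon\|\tu_k\|_2^2+C\|\tu_k\|_{qr'}^{q}\Big(\int(\e^{\alpha r\tu_k^2}-1)\Big)^{1/r}.
\]
The last integral is bounded by \eqref{Adams_RS-tau}, and $\|\tu_k\|_{qr'}\to0$ by the compact radial embedding. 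Hence $\limsup_k\int f(\tu_k)\tu_k\les\varepsilon$, and letting $\varepsilon\to0$ yields the required contradiction. Consequently $\uinfty\not\equiv0$.
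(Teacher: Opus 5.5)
Your proposal is correct and follows the paper's proof essentially step by step. Under $\uinfty\equiv0$, Lemma \ref{Lem_F_conv} gives $\iii{\tu_k}^2\to2c_\infty$, and the sharp bound $c_R<16\pi^2/\alpha_0$ from \cite[Lemma 8]{Sani_critical} puts $\tu_k$ strictly below the Adams threshold; the $\tau$-norm inequality \eqref{Adams_RS-tau}, with the strict slack absorbing the small terms $2\tau\|\nabla\tu_k\|_2^2+\tau^2\|\tu_k\|_2^2$ (exactly as the paper does by taking $\tau$ small), H\"older's inequality and compact embedding then give $\int f(\tu_k)\tu_k\to0$, contradicting the Nehari identity, while nonnegativity via rescaling and Lemma \ref{Pos_CT} is the paper's route as recorded in the proof of Lemma \ref{exists_uinfty-C}.
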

\begin{proof}
	Note that Supposing by contradiction that $\uinfty\equiv0$, we claim that
	\begin{equation}\label{ufu0_claim}
		\int f(\tu_k)\tu_k\to0
	\end{equation}
	as $k\to+\infty$. Indeed, if so, since $u_k$ solves \eqref{eq_R} in $B_{R_k}$ and $J_{R_k}(u_k)=c_{R_k}$, one would establish a contradiction with
	\begin{equation}\label{ufu0_contradiction}
		\int f(\tu_k)\tu_k=\iii{\tu_k}^2=2c_{R_k}+\int F(\tu_k)\to2c_\infty\geq\delta,
	\end{equation}
	by Lemma \ref{Lem_F_conv}, where $\delta>0$ is given by Lemma \ref{MP_lem-C} and $c_\infty:=\displaystyle{\lim_{R\to+\infty}c_R}$.
	
	In order to prove the claim \eqref{ufu0_claim}, let $q\geq1$, $\alpha>\alpha_0$ and $\varepsilon>0$. Then by \eqref{f-C-above} we have
	\begin{equation}
		\begin{split}
			\int f(\tu_k)\tu_k&\leq\varepsilon\|\tu_k\|_2^2+C(\alpha,q,\varepsilon)\int|\tu_k|^q\left(\e^{\alpha\tu_k^2}-1\right)\dd x\\
			&\leq\frac\varepsilon\gamma_0\iii{\tu_k}^2+C(\alpha,q,\varepsilon)\|\tu_k\|_{qp'}^q\left(\int\left(\e^{\alpha p\tu_k^2}-1\right)\dd x\right)^\frac1p,
		\end{split}
	\end{equation}
	having applied H\"older inequality with $p>1$, together with Lemma \ref{estimate_Sani}. Since $\tu_k\to0$ in $L^{qp'}(\R^4)$ by compact embedding, provided one chooses $q>2$, we are left to prove that
	\begin{equation*}
		\sup_k\int\left(\e^{\alpha p\tu_k^2}-1\right)\dd x<+\infty
	\end{equation*}
	for suitable choices of $\alpha>\alpha_0$ and $p>1$. By means of assumptions ($f_6$)-($f_7$), using the same argument of \cite[Lemma 8]{Sani_critical}, we infer that there exists $R_0>0$ such that for all $R\geq R_0$ one may uniformly bound the mountain-pass level in a sharp way and obtain
	$$c_R<\frac{16\pi^2}{\alpha_0}\,.$$
	The constant $R_0$ is related to the support of the Moser sequence used in the proof, which is uniformly bounded with respect to $R$. Since the map $R\mapsto c_R$ is decreasing, we also infer 
	\begin{equation}\label{c_mp_Sani}
		c_\infty<\frac{16\pi^2}{\alpha_0}
	\end{equation}
	and, by Lemma \ref{Lem_F_conv},
	\begin{equation*}
		\iii{\tu_k}^2=2c_{R_k}+2\int F(\tu_k)\to2c_\infty<\frac{32\pi^2}{\alpha_0}.
	\end{equation*}
	Therefore, there exist constants $\nu,\sigma>0$ and $k_0\in\N$ such that
	\begin{equation*}
		{\|\Delta\tu_k\|_2^2\leq2c_\infty+\frac\nu2\qquad\mbox{and}\qquad2c_\infty+\nu<\frac{32\pi^2}{\alpha_0}(1-\sigma)}
	\end{equation*}
	for all $k\geq k_0$. Since $(\tu_k)_k$ is uniformly bounded in $H^2(\R^4)$, then there exists a constant $\tau>0$ small such that
	\begin{equation*}
		2\tau\|\nabla\tu_k\|_2^2+\tau^2\|\tu_k\|_2^2<{\frac\nu2},
	\end{equation*}
	and hence
	\begin{equation*}
		\|\tu_k\|_{H^2,\tau}^2<\frac{32\pi^2}{\alpha_0}(1-\sigma)
	\end{equation*}
	for all $k\geq k_0$. Therefore, it is possible to choose $\alpha>\alpha_0$ close to $\alpha_0$ and $p>1$ close to $1$, so that
	$$\alpha p\|\tu_k\|_{H^2,\tau}^2\leq32\pi^2\frac\alpha{\alpha_0}p(1-\sigma)\leq32\pi^2.$$
	Hence, using Ruf-Sani's version \eqref{Adams_RS-tau} of the Adams' inequality in the whole space, we deduce
	\begin{equation}
		\sup_k\int\left(\e^{\alpha p\tu_k^2}-1\right)\dd x=\sup_k\int\left(\e^{32\pi^2\left(\tfrac{\tu_k}{\|\tu_k\|_{H^2,\tau}}\right)^2}-1\right)\dd x<+\infty
	\end{equation}
	The claim \eqref{ufu0_claim} is then proved and therefore $\uinfty\not\equiv0$.
\end{proof}

\begin{proof}[Proof of Theorem \ref{Thm-C-4}]
	In Lemma \ref{exists_uinfty-C} we already obtained a radial weak solution $\uinfty$ of \eqref{eq} as limit of the solution sequence $(\tu_k)_k$, and proved that $\uinfty$ is nontrivial in Lemma \ref{Lem_ufu0}. We are left to show that $\uinfty$ is a radial \textit{ground state} solution to \eqref{eq}. To see this, we first show that
	\begin{equation}\label{conv_ufu}
		\int f(\tu_k)\tu_k\to\int f(\uinfty)\uinfty.
	\end{equation}
	Indeed, if so, then $\iii{\tu_k}\to\iii{\uinfty}$ holds using the fact that $u_k$ and $\uinfty$ are solutions, and this, together with the weak convergence in $H^2(\R^4)$, leads to the strong convergence $\tu_k\to\uinfty$ in $H^2(\R^4)$. Having this in hand, and since the existence of a radial ground state solution for equation \eqref{eq} is known as described above, one can proceed with the same argument used in the proof of Theorem \ref{Thm2} and obtain the existence of a nonnegative radial ground state solution, positive on $\R^N\setminus\{0\}$.
	
	In order to prove \eqref{conv_ufu}, let us split
	\begin{equation}\label{conv_ufu_split}
		\left|\int f(\tu_k)\tu_k-\int f(\uinfty)\uinfty\right|\leq\int f(\tu_k)|\tu_k-\uinfty|+\int\left|f(\tu_k)-f(\uinfty)\right|\uinfty
	\end{equation}
	and estimate the two terms separately. Concerning the first term, for $\alpha>\alpha_0$, $q\geq2$ and $\varepsilon>0$ one may estimate
	\begin{equation}\label{ukfuk-ufu}
		\begin{split}
			\int& f(\tu_k)|\tu_k-\uinfty|\leq\varepsilon\left(\|\tu_k\|_2^2+\|\uinfty\|_2^2\right)+C(\alpha,q,\varepsilon)\int|\tu_k|^{q-1}|\tu_k-\uinfty|\left(\e^{\alpha\tu_k^2}-1\right)\dd x\\
			&\leq\varepsilon\left(\frac1\gamma_0\iii{\tu_k}^2+\|\uinfty\|_2^2\right)+C(\alpha,q,\varepsilon)\|\tu_k\|_{(q-1)p'\sigma'}^{q-1}\|\tu_k-\uinfty\|_{p'\sigma}\left(\int\left(\e^{\alpha p\tu_k^2}-1\right)\dd x\right)^\frac1p,
		\end{split}
	\end{equation}
	having applied H\"older inequality twice with $p,\sigma>1$ and Lemma \ref{estimate_Sani}. Note that here
	\begin{equation}\label{conv_|||u_k|||}
		\iii{\tu_k}^2=2c_{R_k}+2\int F(\tu_k)\to2c_\infty+2\int F(\uinfty),
	\end{equation}
	by Lemma \ref{Lem_F_conv}, where the second term is positive.
	Defining
	\begin{equation*}
		v_k:=\frac{\tu_k}{\iii{\tu_k}}\qquad\mbox{and}\qquad v_\infty:=\frac{\uinfty}{\sqrt{2c_\infty+2\int F(\uinfty)}}.
	\end{equation*}
	we have $\iii{v_k}=1$ for all $k\in\N$ and $\iii{v_\infty}\leq1$. If $\iii{v_\infty}=1$, then $J(\uinfty)=c_\infty$, which in turns yields $\iii{\tu_k}\to\iii{u_\infty}$ by Lemma \ref{Lem_F_conv}, and the claim \eqref{conv_ufu} is proved. Let us thus suppose that $\iii{v_\infty}<1$, which implies that $J(\uinfty)<c_\infty$. By \cite[Lemma 3]{Sani_critical} one infers
	\begin{equation}\label{Lemma_3_Sani}
		\sup_k\int\left(\e^{rv_k^2}-1\right)\dd x<+\infty\quad\mbox{for all}\ \ r\in\left(0,\frac{32\pi^2}{1-\iii{v_\infty}^2}\right).
	\end{equation}
	Since
	\begin{equation*}
		\sup_k\int\left(\e^{\alpha p\tu_k^2}-1\right)\dd x=\sup_k\int\left(\e^{\alpha p\iii{\tu_k}^2v_k^2}-1\right)\dd x,
	\end{equation*}
	we need to show that we may take $\alpha>\alpha_0$ and $p>1$ such that
	\begin{equation*}
		\alpha p\iii{\tu_k}^2<\frac{32\pi^2}{1-\iii{v_\infty}^2}=32\pi^2\frac{c_\infty+\int F(\uinfty)}{c_\infty-J(\uinfty)}.
	\end{equation*}
	By \eqref{conv_|||u_k|||}, it is sufficient to prove
	\begin{equation}\label{alphabeta}
		\alpha p<\frac{16\pi^2}{c_\infty-J(\uinfty)}.
	\end{equation}
	Recalling that $c_\infty\leq c_R<\frac{16\pi^2}{\alpha_0}$ and $J(\uinfty)>0$, then
	\begin{equation*}
		\alpha_0<\frac{16\pi^2}{c_\infty}<\frac{16\pi^2}{c_\infty-J(\uinfty)},
	\end{equation*}
	then for $\alpha>\alpha_0$ close to $\alpha_0$, $p>1$ close to $1$, one obtains \eqref{alphabeta} and thus by \eqref{Lemma_3_Sani} we conclude
	\begin{equation*}
		\sup_k\int\left(\e^{\alpha p\tu_k^2}-1\right)\dd x=\sup_k\int\left(\e^{\alpha p\iii{\tu_k}^2v_k^2}-1\right)\dd x<+\infty.
	\end{equation*}

	Therefore, it is sufficient to choose $q>3$ to have $\|\tu_k\|_{(q-1)p'\sigma'}\leq\iii{\tu_k}\leq C$ and $\sigma>1$ such that $p'\sigma>2$ and thus $\|\tu_k-\uinfty\|_{p'\sigma}\to0$. Hence by \eqref{ukfuk-ufu} the first term in \eqref{conv_ufu_split} converges to $0$.
	
	To deal with the second term in \eqref{conv_ufu_split} one may argue as in Lemma \ref{Lem_F_conv}. Indeed, since $\uinfty\in L^\infty_{loc}(\R^4)$ by elliptic regularity, by \eqref{f_conv_loc} it is immediate to conclude that
	\begin{equation}\label{ufu_conv_loc}
		\int_{B_\bR} f(\tu_k)\uinfty\to\int_{B_\bR} f(\uinfty)\uinfty
	\end{equation}
	for all $\bR$ sufficiently large. Moreover, for all $\alpha>\alpha_0$ one has
	\begin{equation}\label{ufu_ext-1-C}
		\int_{\R^4\setminus B_\bR}f(\tu_k)\uinfty\dd x\leq\int_{\R^4\setminus B_{\bR}}\tu_k\uinfty+C(\alpha)\int_{\R^4\setminus B_\bR}\uinfty\left(\e^{\alpha\tu_k^2}-1\right)\dd x.
	\end{equation}
	Since for any $\varepsilon>0$ there exists $\bR$ large enough so that
	\begin{equation*}
		\int_{\R^4\setminus B_{\bR}}\tu_k\uinfty\leq\iii{\tu_k}_2\left(\int_{\R^4\setminus B_{\bR}}|\uinfty|^2\right)^\frac12\leq C\varepsilon
	\end{equation*}
	and
	\begin{equation*}
		\int_{\R^4\setminus B_\bR}\uinfty\left(\e^{\alpha\tu_k^2}-1\right)\dd x\leq\sqrt\frac{2\pi^2}\bR\|\uinfty\|_{H^1(\R^4)}\e^{\frac\alpha{2\pi^2}\|\tu_k\|_{H^1(\R^4)}}\leq\frac C\bR,
	\end{equation*}
	because $\|\tu_k\|_{H^1(\R^4)}$ is uniformly bounded in $k$, from \eqref{ufu_ext-1-C} we infer
	\begin{equation}\label{ufu_ext_k}
		\int_{\R^4\setminus B_\bR}f(\tu_k)\uinfty\dd x\leq C\varepsilon+\frac C\bR.
	\end{equation}
	In the same way one also infers that
	\begin{equation}\label{ufu_ext_infty}
		\int_{\R^4\setminus B_\bR}f(\uinfty)\uinfty\dd x\leq C\varepsilon+\frac C\bR.
	\end{equation}
	Combining \eqref{ufu_conv_loc}, \eqref{ufu_ext_k}, and \eqref{ufu_ext_infty}, one eventually deduces \eqref{conv_ufu}, and in turn the claimed convergence $\tu_k\to\uinfty$ in $H^2(\R^4)$. The comparison with a radial ground state, as in the proof of Theorem \ref{Thm2}, gives $J(\uinfty)=c_{min}$. Lemma \ref{lem:no-zero-spheres} then implies $\uinfty(x)>0$ for every $x\ne0$. If $\gamma^2\geq4\gamma_0$, Proposition \ref{prop:factorization-positive} gives $\uinfty(0)>0$ as well.
\end{proof}

\section*{Acknowledgments} G. Romani is partially supported by the INdAM-GNAMPA 2026 project \textit{Structural degeneracy and criticality in (sub)elliptic PDEs} (E53C25002010001). Z. Liu is supported by  the Fundamental Research Funds for the Central Universities, China University of Geosciences (Nos. CUG2106211; CUGST2), and Guangdong Basic and Applied Basic Research Foundation (Nos. 2023A1515011679; 2024A1515012704).

\end{document}